\renewcommand*\FXLayoutInline[3]{%
  {\@fxuseface{inline}
  \ignorespaces\noindent \ovalbox{\hspace{.01\textwidth} \begin{minipage}{.95\textwidth}
  	#3 \fxnotename{#1}: #2
  \end{minipage}\hspace{.01\textwidth}}}
  \newline}
\DeclareFontFamily{OT1}{pzc}{}
\DeclareFontShape{OT1}{pzc}{m}{it}{<-> s * [1.10] pzcmi7t}{}
\DeclareMathAlphabet{\mathpzc}{OT1}{pzc}{m}{it}
\tikzset{anchorbase/.style={outer sep=auto,baseline={([yshift=-0.5ex]current bounding box.center)}}}
\tikzset{wipe/.style={white,line width=4pt}}
\tikzset{every picture/.style=semithick}
\colorlet{objcolor}{OliveGreen}
\newcommand{\textupdot}{\begin{tikzpicture}[anchorbase,scale=1.2]
\draw[thin,-to] (0,-.2) to (0,.2);
\opendot{0,-.01};
\end{tikzpicture}\!}
\newcommand{\textdownstar}{\begin{tikzpicture}[anchorbase,scale=1.2]
\draw[thin,to-] (0,-.2) to (0,.2);
\node at (0,.01) {$\star$};
\end{tikzpicture}\!}
\newcommand{\textdowndot}{\begin{tikzpicture}[anchorbase,scale=1.2]
\draw[thin,to-] (0,-.2) to (0,.2);
\opendot{0,.01};
\end{tikzpicture}\!}
\newcommand{\textupcrossing}{\begin{tikzpicture}[anchorbase,scale=1.2]
\draw[thin,-to] (-.15,-.2) to (.15,.2);
\draw[thin,-to] (.15,-.2) to (-.15,.2);
\end{tikzpicture}}
\newcommand{\textdowncrossing}{\begin{tikzpicture}[anchorbase,scale=1.2]
\draw[thin,to-] (-.15,-.2) to (.15,.2);
\draw[thin,to-] (.15,-.2) to (-.15,.2);
\end{tikzpicture}}
\newcommand\textclockwisebubble{\begin{tikzpicture}[anchorbase,scale=.85]
\clockwisebubble{(0,0)};
\end{tikzpicture}}
\newcommand\textanticlockwisebubble{\begin{tikzpicture}[anchorbase,scale=.85]
\anticlockwisebubble{(0,0)};
\end{tikzpicture}}
\newcommand{\textrightcap}{\begin{tikzpicture}[anchorbase,scale=1.2]
\draw[thin,-to] (-.15,-.15) to [out=90,in=180] (0,.15) to [out=0,in=90] (.15,-.15);
\end{tikzpicture}}
\newcommand{\textrightcup}{\begin{tikzpicture}[anchorbase,scale=1.2]
\draw[thin,-to] (-.15,.15) to [out=-90,in=180] (0,-.15) to [out=0,in=-90] (.15,.15);
\end{tikzpicture}}
\newcommand{\textdot}{\begin{tikzpicture}[anchorbase,scale=1.2]
\draw[-] (0,-.2) to (0,.2);
\closeddot{0,0};
\end{tikzpicture}\!\!}
\newcommand{\textcrossing}{\begin{tikzpicture}[anchorbase,scale=1.2]
\draw[-] (-.15,-.2) to (.15,.2);
\draw[-] (.15,-.2) to (-.15,.2);
\end{tikzpicture}}
\newcommand{\txtcap}{\begin{tikzpicture}[anchorbase,scale=1.2]
\draw[-] (-.15,-.15) to [out=90,in=180] (0,.15) to [out=0,in=90] (.15,-.15);
\end{tikzpicture}}
\newcommand{\txtcup}{\begin{tikzpicture}[anchorbase,scale=1.2]
\draw[-] (-.15,.15) to [out=-90,in=180] (0,-.15) to [out=0,in=-90] (.15,.15);
\end{tikzpicture}}
\newcommand{\limitteleporterOO}[5][black]{
\path #2 node[inner sep=0pt] (x) {${\color{white}\bullet}\hspace{-2mm}\color{#1}\circ$};
\path #5 node[inner sep=0pt] (z) {${\color{white}\bullet}\hspace{-2mm}\color{#1}\circ$};
\draw[Triangle Cap-Triangle Cap,thick,#1!60!white] (x)--node[above,inner sep=1pt,very near start]{$\color{#1!60!white}\scriptstyle#3$} node[above,inner sep=1pt,very near end]{$\color{#1!60!white}\scriptstyle#4$}
(z);
}
\newcommand{\bentlimitteleporterOO}[6][black]{
\path #2 node[inner sep=0pt] (x) {${\color{white}\bullet}\hspace{-2mm}\color{#1}\circ$};
\path #5 node[inner sep=0pt] (z) {${\color{white}\bullet}\hspace{-2mm}\color{#1}\circ$};
\draw[Triangle Cap-Triangle Cap,thick,#1!60!white] (x)..controls#6..node[above,inner sep=1pt,very near start]{$\color{#1!60!white}\scriptstyle#3$} node[above,inner sep=1pt,very near end]{$\color{#1!60!white}\scriptstyle#4$}
(z);
}
\newcommand{\limitteleporteroo}[5][black]{
\path #2 node[inner sep=0pt] (x) {${\color{white}\scriptstyle\bullet}\hspace{-1.42mm}\color{#1}\scriptstyle\circ$};
\path #5 node[inner sep=0pt] (z) {${\color{white}\scriptstyle\bullet}\hspace{-1.42mm}\color{#1}\scriptstyle\circ$};
\draw[Triangle Cap-Triangle Cap,thick,#1!60!white] (x)--node[above,inner sep=1pt,very near start]{$\color{#1!60!white}\scriptstyle#3$} node[above,inner sep=1pt,very near end]{$\color{#1!60!white}\scriptstyle#4$}
(z);
}
\newcommand{\bentlimitteleporteroo}[6][black]{
\path #2 node[inner sep=0pt] (x) {${\color{white}\scriptstyle\bullet}\hspace{-1.42mm}\color{#1}\scriptstyle\circ$};
\path #5 node[inner sep=0pt] (z) {${\color{white}\scriptstyle\bullet}\hspace{-1.42mm}\color{#1}\scriptstyle\circ$};
\draw[Triangle Cap-Triangle Cap,thick,#1!60!white] (x)..controls#6..node[above,inner sep=1pt,very near start]{$\color{#1!60!white}\scriptstyle#3$} node[above,inner sep=1pt,very near end]{$\color{#1!60!white}\scriptstyle#4$}
(z);
}
\newcommand{\limitbandOO}[5][black]{
\path #2 node[inner sep=0pt] (x) {${\color{white}\bullet}\hspace{-2mm}\color{#1}\circ$};
\path #5 node[inner sep=0pt] (z) {${\color{white}\bullet}\hspace{-2mm}\color{#1}\circ$};
\draw[-,densely dotted,#1!60!white] (x)-- node[above,very near start,inner sep=1pt]{$\color{#1!60!white}\scriptstyle#3$} node[above,inner sep=1pt,very near end]{$\color{#1!60!white}\scriptstyle#4$} (z);
}
\newcommand{\bentlimitbandOO}[6][black]{
\path #2 node[inner sep=0pt] (x) {${\color{white}\bullet}\hspace{-2mm}\color{#1}\circ$};
\path #5 node[inner sep=0pt] (z) {${\color{white}\bullet}\hspace{-2mm}\color{#1}\circ$};
\draw[-,densely dotted,#1!60!white] (x)..controls#6..node[above,inner sep=1pt,very near start]{$\color{#1!60!white}\scriptstyle#3$} node[above,inner sep=1pt,very near end]{$\color{#1!60!white}\scriptstyle#4$} (z);
}
\newcommand{\limitbandoo}[5][black]{
\path #2 node[inner sep=0pt] (x) 
{${\color{white}\scriptstyle\bullet}\hspace{-1.4mm}\!\color{#1}\scriptstyle\circ$};
\path #5 node[inner sep=0pt] (z) 
{${\color{white}\scriptstyle\bullet}\hspace{-1.4mm}\!\color{#1}\scriptstyle\circ$};
\draw[-,densely dotted,#1!60!white] (x)-- node[above,very near start,inner sep=1pt]{$\color{#1!60!white}\scriptstyle#3$} node[above,inner sep=1pt,very near end]{$\color{#1!60!white}\scriptstyle#4$} (z);
}
\newcommand{\bentlimitbandoo}[6][black]{
\path #2 node[inner sep=0pt] (x) 
{${\color{white}\scriptstyle\bullet}\hspace{-1.42mm}\color{#1}\scriptstyle\circ$};
\path #5 node[inner sep=0pt] (z) 
{${\color{white}\scriptstyle\bullet}\hspace{-1.42mm}\color{#1}\scriptstyle\circ$};
\draw[-,densely dotted,#1!60!white] (x)..controls#6..node[above,inner sep=1pt,very near start]{$\color{#1!60!white}\scriptstyle#3$} node[above,inner sep=1pt,very near end]{$\color{#1!60!white}\scriptstyle#4$} (z);
}
\newcommand{\pinO}[5][black]{
\path #2 node[inner sep=0pt] (x) {${\color{white}\bullet}\hspace{-2mm}\color{#1}\circ$};
\path #4 node[rectangle,rounded corners,draw,#1!60!white,fill=#1!10!white,inner sep=2.5pt](y) {$\color{#1}\scriptstyle#5$};
\draw[Triangle Cap-,thick,#1!60!white] (x)-- node[above,inner sep=1pt]{$\color{#1!60!white}\scriptstyle#3$}(y);
}
\newcommand{\pino}[5][black]{
\path #2 node[inner sep=0pt] (x) {${\color{white}\scriptstyle\bullet}\hspace{-1.42mm}\color{#1}\scriptstyle\circ$};
\path #4 node[rectangle,rounded corners,draw,#1!60!white,fill=#1!10!white,inner sep=2.5pt](y) {$\color{#1}\scriptstyle#5$};
\draw[Triangle Cap-,thick,#1!60!white] (x)-- node[above,inner sep=1pt]{$\color{#1!60!white}\scriptstyle#3$}(y);
}
\newcommand{\pinX}[5][black]{
\path #2 node[inner sep=0pt] (x) {$\color{#1}\bullet$};
\path #4 node[rectangle,rounded corners,draw,#1!60!white,fill=#1!10!white,inner sep=2.5pt](y) {$\color{#1}\scriptstyle#5$};
\draw[Triangle Cap-,thick,#1!60!white] (x)-- node[above,inner sep=1pt]{$\color{#1!60!white}\scriptstyle#3$}(y);
}
\newcommand{\bentpinX}[6][black]{
\path #2 node[inner sep=0pt] (x) {$\color{#1}\bullet$};
\path #4 node[rectangle,rounded corners,draw,#1!60!white,fill=#1!10!white,inner sep=2.5pt](y) {$\color{#1}\scriptstyle#5$};
\draw[Triangle Cap-,thick,#1!60!white] (x)..controls#6.. node[above,inner sep=1pt]{$\color{#1!60!white}\scriptstyle#3$}(y);
}
\newcommand\clockwisebubble[2][black]{%
\draw[-to,thin,#1] #2++(0,.2) arc(90:-270:0.2);
}
\newcommand\anticlockwisebubble[2][black]{%
  \draw[-to,thin,#1] #2++(0,.2) arc(90:450:0.2);
}
\newcommand\anticlockwisebubbleflip[2][black]{%
  \draw[-to,thin,#1] #2++(0,-.2) arc(-90:270:0.2);
}
\newcommand\filledclockwisebubble[2][black]{%
\draw[-to,#1,thin,fill=#1!10!white] #2++(0,.2) arc(90:-270:0.2);
}
\newcommand\filledanticlockwisebubble[2][black]{%
  \draw[-to,#1,thin,fill=#1!10!white] #2++(0,.2) arc(90:450:0.2);
}
\newcommand\anticlockwiseinternalbubble[2][black]{%
  \draw[-to,#1,thin,fill=#1!10!white] #2++(-.2,0) arc(-180:180:0.2);
}
\newcommand\clockwiseinternalbubble[2][black]{%
  \draw[-to,#1,thin,fill=#1!10!white] #2++(.2,0) arc(0:-360:0.2);
}
\colorlet{catcolor}{OliveGreen}
\def\catlabel#1{{\scriptstyle\color{catcolor}#1}}
\def\kmlabel#1{{\scriptstyle#1}}
\newcommand\closeddot[1]{\node at (#1) {$\bullet$}}
\newcommand\opendot[1]{\node at (#1)   {${\color{white}\bullet}\hspace{-2mm}\circ$}} 
\newcommand\opendotsmall[1]{\node at (#1)   {${\color{white}\scriptstyle\bullet}\hspace{-1.42mm}\scriptstyle\circ$}} 
\crefname{definition}{Definition}{Definitions}
\crefname{example}{Example}{Examples}
\crefname{lemma}{Lemma}{Lemmas}
\crefname{conjecture}{Conjecture}{Conjectures}
\crefname{corollary}{Corollary}{Corollaries}
\crefname{theorem}{Theorem}{Theorems}
\crefname{remark}{Remark}{Remarks}
\crefname{equation}{}{}
\crefname{enumi}{}{}
\crefname{section}{Section}{Section}
\crefname{subsection}{Subsection}{Section}
\newcommand\Z{\mathbb{Z}}
\newcommand\kk{\Bbbk}
\def\O{\mathbb{O}}
\def\pmod#1{\;(\operatorname{mod}\,#1)}\def\wt{\operatorname{wt}}
\def\op{{\operatorname{op}}}
\def\rev{{\operatorname{rev}}}
\newcommand\one{\mathbbm{1}}
\def\sl{\mathfrak{sl}}
\def\RSD{\mathrm{D}}
\def\loc{{\begin{tikzpicture}[baseline=.2mm]
\path (0,0) node[inner sep=0pt] (x) {$\scriptstyle\circ$};
\path (0.32,0) node[inner sep=0pt] (y) {$\scriptstyle\circ$};
\draw[-,densely dotted,black!60!white] (x)--(y);
\end{tikzpicture}}}
\newcommand\cNB{\mathpzc{NB}}
\newcommand\cU{\mathpzc{U}}
\newcommand\fU{\mathfrak{U}}
\newcommand\cUc{\widehat{\mathpzc{U}}}
\newcommand\fUc{\widehat{\mathfrak{U}}}
\newcommand\tT{{\scriptstyle\mathrm{T}}}
\newcommand\tR{{\scriptstyle\mathrm{R}}}
\newcommand\tD{{\scriptstyle\mathrm{D}}}
\newcommand{\ndots}{\bullet}
\def\lround{(\!(}
\def\rround{)\!)}
\DeclareMathOperator{\End}{End}
\DeclareMathOperator{\Hom}{Hom}
\def\Add{\operatorname{Add}}
\def\X{\mathrm{X}}
\newcommand{\U}{{\mathrm U}}
\newcommand{\UAi}{{_\Z}{\mathrm U}_t^{\imath}}
\newtheorem{theorem}{Theorem}[section]
\newtheorem{lemma}[theorem]{Lemma}
\newtheorem*{lemma*}{Lemma}
\newtheorem{corollary}[theorem]{Corollary}
\theoremstyle{definition}
\newtheorem{definition}[theorem]{Definition}
\newtheorem{remark}[theorem]{Remark}
\numberwithin{equation}{section}
\begin{document}

\title{The nil-Brauer category}

\author{Jonathan Brundan}
\address[J.B.]{
  Department of Mathematics,
  University of Oregon,
  Eugene, OR, USA
}
\email{brundan@uoregon.edu}

\author{Weiqiang Wang}
\address[W.W.]{Department of Mathematics, University of Virginia, Charlottesville, VA, USA}
\email{ww9c@virginia.edu}

\author{Ben Webster}
\address[B.W.]{Department of Pure Mathematics, University of Waterloo \& Perimeter Institute for Theoretical Physics,
Waterloo, ON, Canada}
\email{ben.webster@uwaterloo.ca}

\thanks{J.B. supported in part by NSF grant
  DMS-2101783. W.W. supported in part by the NSF grant DMS-2001351. B.W. supported by Discovery Grant RGPIN-2018-03974 from the
  Natural Sciences and Engineering Research Council of Canada. This research was also supported by Perimeter Institute for Theoretical Physics. Research at Perimeter Institute is supported by the Government of Canada through the Department of Innovation, Science and Economic Development and by the Province of Ontario through the Ministry of Research and Innovation.
}

\subjclass[2020]{Primary 17B10}
\keywords{Brauer category, string calculus, categorification}

\begin{abstract}
We introduce the nil-Brauer category and prove a basis theorem for its morphism spaces. This basis theorem is an essential ingredient required to prove that nil-Brauer categorifies the split iquantum group of rank one. As this iquantum group is a basic building block for iquantum groups of higher rank, we expect that the nil-Brauer category will play a central role in future developments related to the categorification of quantum symmetric pairs.
\end{abstract}

\maketitle


\section{Introduction}\label{sec1}

Throughout the article, we work over an integral domain $\kk$ in which 2 is invertible, and all categories, functors, etc. will be assumed to be $\kk$-linear without further mention. The aim of the article is to introduce a new strict graded monoidal category, the {\em nil-Brauer category}, denoted $\cNB_t$ for a parameter $t \in \kk$.
It turns out that $\cNB_t$ is non-trivial only for $t=0$ or $t=1$; we assume this is the case for the remainder of the introduction.

The importance of the nil-Brauer category 
stems from results established in our subsequent work \cite{BWW} relating $\cNB_t$ to $\mathrm{U}_q^\imath(\sl_2)$,
the split iquantum group of rank one
corresponding to the symmetric pair $(\mathrm{SL}_2,\mathrm{SO}_2)$.
Roughly speaking, 
the monoidal categories $\cNB_t\:(t=0,1)$ play the same role
for this, the most basic of all iquantum groups, as 
the strict graded
2-category $\fU(\sl_2)$ introduced in \cite{Rou,Lauda} 
plays for the ordinary quantum group 
$\mathrm{U}_q(\sl_2)$.
To make a more precise statement,
let $\UAi$ be the modified $\Z[q,q^{-1}]$-form
of 
$\mathrm{U}_q^\imath(\sl_2)$
associated to weights of parity $t \in \{0,1\}$
arising as a special case of the constructions in \cite{BW18QSP,BW18KL}. 
The algebra $\mathrm{U}_q^\imath(\sl_2)$ is simply a polynomial
algebra in one variable $B$, but the integral form $\UAi$ is not at all obvious; it has a basis as a free $\Z[q,q^{-1}]$-module given by the icanonical basis which was computed explicitly in \cite{BeW18}.
We will show
in \cite{BWW} that $\cNB_t$
categorifies $\UAi$ in the sense that the split Grothendieck ring of its graded Karoubi envelope, viewed as a $\Z[q,q^{-1}]$-algebra with the action of $q$ coming from the grading shift functor, is isomorphic to $\UAi$, with the icanonical basis arising from isomorphism classes of indecomposable objects.

The main theorem about $\cNB_t$ proved in the present article gives explicit bases for morphism spaces in $\cNB_t$, a result which is used in an essential way in \cite{BWW}.
Our proof follows a similar strategy to the approach developed for Khovanov's Heisenberg category in \cite{K0}, exploiting a remarkable monoidal functor 
$$
\Omega_t:
\cNB_t \longrightarrow \Add\!\big(\cU(\sl_2;t)_\loc\big)
$$
from $\cNB_t$ to the additive envelope of a monoidal category $\cU(\sl_2;t)_\loc$ obtained by localizing the 2-category $\fU(\sl_2)$ at certain morphisms. This functor takes the generating object $B$ of $\cNB_t$ to the direct sum $E \oplus F$ of the generating objects of $\cU(\sl_2;t)_\loc$. It can be interpreted (in a weak sense due to the need to localize)
as a categorification of the natural embedding of $\UAi$ into the completion 
$\prod_{\lambda,\mu \equiv t\pmod{2}}
1_\mu\;{_\Z}\dot \U\, 1_\lambda$
of Lusztig's modified $\Z[q,q^{-1}]$-form of the quantum group $\mathrm{U}_q(\sl_2)$. Although the construction of $\Omega_t$ is elementary, it depends on an astonishingly delicate computation with generators and relations expressed in terms of string calculus.

The rest of the article is organized as follows. 
\begin{itemize}
\item
We begin in \cref{sec2} by defining $\cNB_t$ by generators and relations; see \cref{NBdef}. Then we show that it is trivial unless $t \in \{0,1\}$. For these values of $t$, we construct a homomorphism
$$
\gamma_t:\Gamma \rightarrow \End_{\cNB_t}(\one)
$$
where $\Gamma$ is the subalgebra of the algebra of
symmetric functions over $\kk$ generated by
Schur's $Q$-functions. 
\item
In \cref{sec3}, we recall the definition of the 2-category $\fU(\sl_2)$, and then introduce a localized version of it, denoted $\fU(\sl_2)_\loc$. 
In particular, this localization adjoins inverses of the 2-morphisms usually denoted by dots.
We then derive some remarkable relations in $\fU(\sl_2)_\loc$ involving certain 2-morphisms which we call {\em internal bubbles} which are analogous to, but more complicated than, corresponding morphisms for the Heisenberg category introduced in \cite{K0}. 
\item
Then in \cref{sec4} we pass from the 2-category $\fU(\sl_2)_\loc$ to the monoidal category $\cU(\sl_2;t)_\loc$. The objects in the latter are words $X$ in $E$ and $F$, corresponding to formal sums of the 1-morphisms $X 1_\lambda$ in $\fU(\sl_2)_\loc$ for the same word $X$ and all weights $\lambda\equiv t \pmod{2}$, and its morphisms are represented by sequences of 2-morphisms in $\fU(\sl_2)_\loc$; see \cref{hearts} for the complete definition. The monoidal functor $\Omega_t$ is finally constructed in \cref{psit}.
\item
In \cref{sec5}, we use this functor to prove our main basis theorem, \cref{basisthm}. This result is analogous to the
basis theorem establishing non-degeneracy of the Kac-Moody 2-category $\fU(\sl_n)$ in \cite{KL3}.
It shows that the homomorphism 
$\gamma_t$ mentioned earlier is actually  an 
{\em isomorphism}, i.e.,
$\End_{\cNB_t}(\one)\cong \Gamma$.
Moreover, each morphism space 
$\Hom_{\cNB_t}(B^{\star n}, B^{\star m})$
is free as a graded $\Gamma$-module with an explicit homogeneous basis. 
\end{itemize}

Now we can give some further justification of the importance of the basis theorem proved here:
in the sequel \cite{BWW}, we show
that the graded rank of 
$\Hom_{\cNB_t}(B^{\star n}, B^{\star m})$
as a free $\Gamma$-module is
equal to $(B^n,B^m)^\iota$, 
where $(\cdot,\cdot)^\iota$ is the 
non-degenerate
symmetric bilinear form on the iquantum group
$\UAi$ from 
\cite[Lem.~6.25]{BW18QSP}. We regard this as
the first indication that
$\cNB_t$ categorifies
$\UAi$, indeed, it provided us with some initial clues
as to the precise form of the generators and relations for $\cNB_t$ in \cref{NBdef}.

\vspace{2mm}
\noindent{\em Acknowledgements.} We thank Peng Shan for her interest and encouragement in the early stages of this project.


\section{Definition and first properties of the nil-Brauer category}\label{sec2}

The definition of $\cNB_t$ is by generators and relations.
We use the string calculus to denote morphisms in
 strict monoidal categories (or 2-morphisms in strict 2-categories), our general convention being that composition $f \circ g$ (``vertical composition")
 is depicted by placing $f$ on top of $g$
 and tensor product $f \star g$ (``horizontal composition")
 is depicted by placing $f$ to the left of $g$.

\begin{definition}\label{NBdef}
For $t \in \kk$, the {\em nil-Brauer category}
 $\cNB_t$
is the strict monoidal category
with one generating object $B$, whose identity endomorphism will be
represented diagrammatically by the unlabeled string
$\;\begin{tikzpicture}[anchorbase]\draw[-] (0,-.2) to (0,.2);
\end{tikzpicture}\;$,
and four generating morphisms
\begin{align}\label{gens}
\begin{tikzpicture}[anchorbase]
	\draw[-] (0.08,-.3) to (0.08,.4);
    \closeddot{0.08,.05};
\end{tikzpicture}
&:B\rightarrow B,&
\begin{tikzpicture}[anchorbase]
\draw[-] (0.28,-.3) to (-0.28,.4);
	\draw[-] (-0.28,-.3) to (0.28,.4);
\end{tikzpicture}
&:B\star B \rightarrow B \star B,
&
\begin{tikzpicture}[anchorbase]
	\draw[-] (0.4,0) to[out=90, in=0] (0.1,0.4);
	\draw[-] (0.1,0.4) to[out = 180, in = 90] (-0.2,0);
\end{tikzpicture}
&:B \star B\rightarrow\one,&
\begin{tikzpicture}[anchorbase]
	\draw[-] (0.4,0.4) to[out=-90, in=0] (0.1,0);
	\draw[-] (0.1,0) to[out = 180, in = -90] (-0.2,0.4);
\end{tikzpicture}
&:\one\rightarrow B\star B,
\\\notag
&\text{(degree $2$)}&
&\text{(degree $-2$)}&
&\text{(degree $0$)}&
&\text{(degree $0$)}
\end{align}
subject to the following relations:
\begin{align}\label{rels1}
\begin{tikzpicture}[anchorbase]
	\draw[-] (0.28,0) to[out=90,in=-90] (-0.28,.6);
	\draw[-] (-0.28,0) to[out=90,in=-90] (0.28,.6);
	\draw[-] (0.28,-.6) to[out=90,in=-90] (-0.28,0);
	\draw[-] (-0.28,-.6) to[out=90,in=-90] (0.28,0);
\end{tikzpicture}
&=0,
&\begin{tikzpicture}[anchorbase]
	\draw[-] (0.45,.6) to (-0.45,-.6);
	\draw[-] (0.45,-.6) to (-0.45,.6);
        \draw[-] (0,-.6) to[out=90,in=-90] (-.45,0);
        \draw[-] (-0.45,0) to[out=90,in=-90] (0,0.6);
\end{tikzpicture}
&=
\begin{tikzpicture}[anchorbase]
	\draw[-] (0.45,.6) to (-0.45,-.6);
	\draw[-] (0.45,-.6) to (-0.45,.6);
        \draw[-] (0,-.6) to[out=90,in=-90] (.45,0);
        \draw[-] (0.45,0) to[out=90,in=-90] (0,0.6);
\end{tikzpicture}\:,\\
\label{rels2}
\begin{tikzpicture}[baseline=-2.5mm]
\draw (0,-.15) circle (.3);
\end{tikzpicture}
&= t 1_\one,
&\begin{tikzpicture}[anchorbase]
  \draw[-] (0.3,0) to (0.3,.4);
	\draw[-] (0.3,0) to[out=-90, in=0] (0.1,-0.4);
	\draw[-] (0.1,-0.4) to[out = 180, in = -90] (-0.1,0);
	\draw[-] (-0.1,0) to[out=90, in=0] (-0.3,0.4);
	\draw[-] (-0.3,0.4) to[out = 180, in =90] (-0.5,0);
  \draw[-] (-0.5,0) to (-0.5,-.4);
\end{tikzpicture}
&=
\begin{tikzpicture}[anchorbase]
  \draw[-] (0,-0.4) to (0,.4);
\end{tikzpicture}
=\begin{tikzpicture}[anchorbase]
  \draw[-] (0.3,0) to (0.3,-.4);
	\draw[-] (0.3,0) to[out=90, in=0] (0.1,0.4);
	\draw[-] (0.1,0.4) to[out = 180, in = 90] (-0.1,0);
	\draw[-] (-0.1,0) to[out=-90, in=0] (-0.3,-0.4);
	\draw[-] (-0.3,-0.4) to[out = 180, in =-90] (-0.5,0);
  \draw[-] (-0.5,0) to (-0.5,.4);
\end{tikzpicture}
\:,\\\label{rels3}
\begin{tikzpicture}[anchorbase,scale=1.1]
	\draw[-] (0.35,.3)  to [out=90,in=-90] (-.1,.9) to [out=90,in=180] (.1,1.1);
 \draw[-] (-.15,.3)  to [out=90,in=-90] (.3,.9) to [out=90,in=0] (.1,1.1);
\end{tikzpicture}
&=0
\:,&
\begin{tikzpicture}[anchorbase,scale=1.1]
	\draw[-] (0.5,0) to[out=90, in=0] (0.1,0.5);
	\draw[-] (0.1,0.5) to[out = 180, in = 90] (-0.3,0);
 \draw[-] (0.1,0) to[out=90,in=-90] (-.3,.7);
\end{tikzpicture}
&=
\begin{tikzpicture}[anchorbase,scale=1.1]
	\draw[-] (0.5,0) to[out=90, in=0] (0.1,0.5);
	\draw[-] (0.1,0.5) to[out = 180, in = 90] (-0.3,0);
 \draw[-] (0.1,0) to[out=90,in=-90] (.5,.7);
\end{tikzpicture}\:,
\\
\label{rels4}
\begin{tikzpicture}[anchorbase,scale=1.4]
	\draw[-] (0.25,.3) to (-0.25,-.3);
	\draw[-] (0.25,-.3) to (-0.25,.3);
 \closeddot{-0.12,0.145};
\end{tikzpicture}
-
\begin{tikzpicture}[anchorbase,scale=1.4]
	\draw[-] (0.25,.3) to (-0.25,-.3);
	\draw[-] (0.25,-.3) to (-0.25,.3);
     \closeddot{.12,-0.145};
\end{tikzpicture}
&=
\begin{tikzpicture}[anchorbase,scale=1.4]
 	\draw[-] (0,-.3) to (0,.3);
	\draw[-] (-.3,-.3) to (-0.3,.3);
\end{tikzpicture}
-
\begin{tikzpicture}[anchorbase,scale=1.4]
 	\draw[-] (-0.15,-.3) to[out=90,in=180] (0,-.1) to[out=0,in=90] (0.15,-.3);
 	\draw[-] (-0.15,.3) to[out=-90,in=180] (0,.1) to[out=0,in=-90] (0.15,.3);
\end{tikzpicture}
\:,&
\begin{tikzpicture}[anchorbase,scale=1.1]
	\draw[-] (0.4,0) to[out=90, in=0] (0.1,0.5);
	\draw[-] (0.1,0.5) to[out = 180, in = 90] (-0.2,0);
 \closeddot{.38,.2};
\end{tikzpicture}
&=
-\begin{tikzpicture}[anchorbase,scale=1.1]
	\draw[-] (0.4,0) to[out=90, in=0] (0.1,0.5);
	\draw[-] (0.1,0.5) to[out = 180, in = 90] (-0.2,0);
 \closeddot{-.18,.2};
\end{tikzpicture}
\:.
\end{align}
\end{definition}

\begin{remark} \label{NBgrading}
Although it will only play a secondary role in this article, it is important 
for the sequel to note that $\cNB_t$
can be viewed as a {\em graded} monoidal category, i.e., a monoidal category enriched in
the closed symmetric monoidal category of graded
$\kk$-modules, by declaring that the generators
are of the degrees indicated in parentheses in \cref{gens}.
\end{remark}

\begin{remark}
The defining relations just recorded are quite familiar in related settings. The first two
relations
\cref{rels1} are the same as defining relations in the nil-Hecke algebras associated to symmetric groups, but the polynomial generator
of the nil-Hecke algebra (often depicted also by a dot) satisfies slightly different relations to \cref{rels4}.
These relations come instead from the defining relations for the {\em affine Brauer category} of \cite{RS}, which is 
the strict monoidal category defined in the same way as in 
\cref{NBdef} but replacing the $0$ on the right-hand side of the first relation in \cref{rels1} by the identity (so that the crossing is an involution) 
and the $0$ on the right-hand side of the first relation in \cref{rels3} 
by $\:\txtcap\:$. The
endomorphism algebras of
objects in the affine Brauer category were
introduced earlier by Nazarov \cite[Sec.~4]{Nazarov}, and are known as
Nazarov-Wenzl algebras \cite{AMR}, degenerate affine BMW algebras \cite{DRV} or affine VW algebras \cite{ES}.
The critical sign in the final relation from \cref{rels4} emerged in that setting from considerations involving orthogonal groups (and is unrelated to superalgebra).
\end{remark}

As usual with definitions by generators and relations, the first task is to derive further relations as consequences of the defining ones.
To start with, we have the following:
\begin{align}\label{rels5}
\begin{tikzpicture}[anchorbase,scale=1.1]
	\draw[-] (0.5,0) to[out=-90, in=0] (0.1,-0.5);
	\draw[-] (0.1,-0.5) to[out = 180, in = -90] (-0.3,0);
 \draw[-] (0.1,0) to[out=-90,in=90] (-.3,-.7);
\end{tikzpicture}
&=
\begin{tikzpicture}[anchorbase,scale=1.1]
	\draw[-] (0.5,0) to[out=-90, in=0] (0.1,-0.5);
	\draw[-] (0.1,-0.5) to[out = 180, in = -90] (-0.3,0);
 \draw[-] (0.1,0) to[out=-90,in=90] (.5,-.7);
\end{tikzpicture}\:,&
\begin{tikzpicture}[anchorbase]
	\draw[-] (0,-.3)  to (0,-.1) to [out=90,in=180] (.3,.3) to [out=0,in=90] (.5,.1) to[out=-90,in=0] (.3,-.1) to [out=180,in=-90] (0,.3) to (0,.5);
\end{tikzpicture}&=
0=\begin{tikzpicture}[anchorbase]
	\draw[-] (0,-.3)  to (0,-.1) to [out=90,in=0] (-.3,.3) to [out=180,in=90] (-.5,.1) to[out=-90,in=180] (-.3,-.1) to [out=0,in=-90] (0,.3) to (0,.5);
\end{tikzpicture}\:,\\
\label{rels6}
\begin{tikzpicture}[anchorbase,scale=1.1]
	\draw[-] (0.35,-.3)  to [out=-90,in=90] (-.1,-.9) to [out=-90,in=180] (.1,-1.1);
 \draw[-] (-.15,-.3)  to [out=-90,in=90] (.3,-.9) to [out=-90,in=0] (.1,-1.1);
\end{tikzpicture}
&=0,&
\begin{tikzpicture}[anchorbase,scale=1.4]
 	\draw[-] (-0.25,-.3) to[out=90,in=180] (0,.1) to[out=0,in=90] (0.25,-.3);
 	\draw[-] (-0.25,.3) to[out=-90,in=180] (0,-.1) to[out=0,in=-90] (0.25,.3);
\end{tikzpicture}&=0,
\\
\label{rels7}
\begin{tikzpicture}[anchorbase,scale=1.4]
	\draw[-] (0.25,.3) to (-0.25,-.3);
	\draw[-] (0.25,-.3) to (-0.25,.3);
 \closeddot{-0.12,-0.145};
\end{tikzpicture}
-
\begin{tikzpicture}[anchorbase,scale=1.4]
	\draw[-] (0.25,.3) to (-0.25,-.3);
	\draw[-] (0.25,-.3) to (-0.25,.3);
     \closeddot{.12,0.145};
\end{tikzpicture}
&=
\begin{tikzpicture}[anchorbase,scale=1.4]
 	\draw[-] (0,-.3) to (0,.3);
	\draw[-] (-.3,-.3) to (-0.3,.3);
\end{tikzpicture}
-
\begin{tikzpicture}[anchorbase,scale=1.4]
 	\draw[-] (-0.15,-.3) to[out=90,in=180] (0,-.1) to[out=0,in=90] (0.15,-.3);
 	\draw[-] (-0.15,.3) to[out=-90,in=180] (0,.1) to[out=0,in=-90] (0.15,.3);
\end{tikzpicture}
\:,&
\begin{tikzpicture}[anchorbase,scale=1.1]
	\draw[-] (0.4,0) to[out=-90, in=0] (0.1,-0.5);
	\draw[-] (0.1,-0.5) to[out = 180, in = -90] (-0.2,0);
 \closeddot{.38,-.2};
\end{tikzpicture}
&=
-\begin{tikzpicture}[anchorbase,scale=1.1]
	\draw[-] (0.4,0) to[out=-90, in=0] (0.1,-0.5);
	\draw[-] (0.1,-0.5) to[out = 180, in = -90] (-0.2,0);
 \closeddot{-.18,-.2};
\end{tikzpicture}
\:.
\end{align}
For example, to prove the first of these, we attach cups to the bottom left and bottom right of the second relation in \cref{rels3} to obtain
$$
\begin{tikzpicture}[anchorbase,scale=.8]
\draw[-] (-1.1,.7) to (-1.1,0) to [out=-90,in=180] (-.7,-.4) to [out=0,in=-90] (-.3,0);
\draw[-] (1.3,.7) to (1.3,0) to [out=-90,in=0] (.9,-.4) to [out=180,in=-90] (.5,0);
	\draw[-] (0.5,0) to[out=90, in=0] (0.1,0.5);
	\draw[-] (0.1,0.5) to[out = 180, in = 90] (-0.3,0);
 \draw[-] (0.1,-.5) to (0.1,-.1) to[out=90,in=-90] (-.3,.7);
\end{tikzpicture}
=
\begin{tikzpicture}[anchorbase,scale=.8]
\draw[-] (-1.1,.7) to (-1.1,0) to [out=-90,in=180] (-.7,-.4) to [out=0,in=-90] (-.3,0);
\draw[-] (1.3,.7) to (1.3,0) to [out=-90,in=0] (.9,-.4) to [out=180,in=-90] (.5,0);
	\draw[-] (0.5,0) to[out=90, in=0] (0.1,0.5);
	\draw[-] (0.1,0.5) to[out = 180, in = 90] (-0.3,0);
 \draw[-] (0.1,-.5) to (0.1,-.1) to[out=90,in=-90] (.5,.7);
\end{tikzpicture}\;.
$$
This can then be simplified using the zigzag relations from \cref{rels2} to obtain the desired relation.
The other relations in \cref{rels5,rels6,rels7} are proved
similarly by attaching cups and/or caps to the ends of the strings in other defining relations then simplifying in obvious ways.

Now we take the first relation from \cref{rels4} and close at the top by attaching a cap and at the bottom by attaching a cup.
The left-hand side is 0 due to the first relations from \cref{rels3,rels6}. After replacing the bubbles on the right-hand side with $t 1_\one$, we deduce from this that
\begin{equation}\label{rels8}
t^2 1_\one = t 1_\one.
\end{equation}
Thus, for $\cNB_t$ to be non-trivial, one must 
have that $t \in \{0,1\}$. This will be assumed from now on.
In fact, henceforth, we will treat $t$ as though it is an element of $\{0,1\} \subset \Z$ (rather than being the image of that integer in $\kk$) so that we can use convenient
expressions like $(-1)^t$.

The relations established so far imply that 
there are strict monoidal functors
\begin{align}
\tR:\cNB_t &\rightarrow \cNB_t^{\rev}, &B &\mapsto B, & s &\mapsto (-1)^{\ndots(s)} s^{\leftrightarrow},
\label{R}\\\label{T}
\tT:\cNB_t &\rightarrow \cNB_t^{\op}, &B &\mapsto B, & s &\mapsto
s^{\updownarrow}.
\end{align}
Here, for a string diagram $s$ 
we use $s^{\updownarrow}$ 
and $s^{\leftrightarrow}$ to denote
its reflection in a horizontal or vertical axis,
and $\ndots(s)$ denotes the total number of dots
in the diagram.
The category $\cNB_t$ is {\em strictly pivotal} with duality functor $\tD := \tR \circ \tT = \tT \circ \tR$; this rotates a string diagram $s$ through $180^\circ$ then scales by $(-1)^{\ndots(s)}$. 
Also by the relations established so far, a string diagram with no dots can be deformed freely under planar isotopy without changing the morphism that it represents. This is not true in the presence of dots due to the sign in the last relations from \cref{rels4,rels7}---there is a sign change whenever a dot slides across the critical point of a cup or cap.

To establish additional useful relations, we adopt a generating function formalism which is a slight refinement of the setup introduced in \cite{HKM}. We denote the $r$th power of 
$\textdot$ under vertical composition
simply by labeling the dot with $r$. More generally,
given a polynomial $f(x) = \sum_{r\geq 0} c_r x^r \in \kk[x]$
and a dot in some string diagram $s$,
we denote
$$
\sum_{r \geq 0} c_r \times 
(\text{the morphism obtained from $s$ by labeling the dot by }r)
$$
by attaching what we call a {\em pin} to the dot, labeling 
the node at the head of the pin by $f(x)$:
\begin{equation}\label{labelledpin}
\begin{tikzpicture}[anchorbase]
	\draw[-] (0,-.4) to (0,.4);
    \pinX{(0,0)}{}{(.8,0)}{f(x)};
\end{tikzpicture}\;
:=
\sum_{r \geq 0} c_r 
\:\begin{tikzpicture}[anchorbase]
	\draw[-] (0,-.4) to (0,.4);
    \closeddot{0,0};
    \node at (.2,0) {$\scriptstyle{r}$};
\end{tikzpicture}
\in \End_{\cNB_t}(B).
\end{equation}
In the drawing of a pin, the arm and the head of the pin can be moved freely around larger diagrams so long as the point stays put---these are not part of the string calculus.
More generally, $f(x)$ here could be a polynomial
with coefficients in the algebra $\kk\lround u^{-1} \rround$
of formal Laurent series in an indeterminate $u^{-1}$;
then the string $s$ decorated with a pin labeled $f(x)$ defines a morphism in the base-changed monoidal category 
$\cNB_t\llbracket u^{-1}\rrbracket$. We think of this as being a generating function for a family of morphisms.
Pins labeled by the power series
\begin{equation}
(u+ax)^{-1} = u^{-1} - ax u^{-2}+ a^2 x^2 u^{-3} -a^3 x^3 u^{-4}+\cdots \in \kk[x]\llbracket u^{-1} \rrbracket
\end{equation}
for $a \in \{+,-\}$ appear so often that we denote them by a special shorthand,
putting the variable $u$ into the node at the head of the pin and in addition we label the arm of the pin by the sign $a$:
\begin{align}\label{ping}
\begin{tikzpicture}[anchorbase]
	\draw[-] (0,-.4) to (0,.4);
    \pinX{(0,0)}{-}{(.8,0)}{u};
\end{tikzpicture}\;
&:=
\begin{tikzpicture}[anchorbase]
	\draw[-] (0,-.4) to (0,.4);
    \closeddot{0,0};
    \pinX{(0,0)}{}{(1.3,0)}{(u-x)^{-1}};
\end{tikzpicture}\;
= 
u^{-1}
\: \begin{tikzpicture}[anchorbase]
	\draw[-] (0.08,-.4) to (0.08,.4);
\end{tikzpicture}
+
u^{-2}\: \begin{tikzpicture}[anchorbase]
	\draw[-] (0.08,-.4) to (0.08,.4);
    \closeddot{0.08,0};
\end{tikzpicture}
+
u^{-3}
\: \begin{tikzpicture}[anchorbase]
	\draw[-] (0.08,-.4) to (0.08,.4);
    \closeddot{0.08,.1};
    \closeddot{0.08,-.1};
\end{tikzpicture}
+
u^{-4}
\: \begin{tikzpicture}[anchorbase]
	\draw[-] (0.08,-.4) to (0.08,.4);
    \closeddot{0.08,.2};
    \closeddot{0.08,0};
    \closeddot{0.08,-.2};
\end{tikzpicture}
+\cdots \in \End_{\cNB_t}(B)\llbracket u^{-1}\rrbracket,\\\label{pong}
\begin{tikzpicture}[anchorbase]
	\draw[-] (0,-.4) to (0,.4);
    \pinX{(0,0)}{+}{(.8,0)}{u};
\end{tikzpicture}\;
&:=
\begin{tikzpicture}[anchorbase]
	\draw (0,-.4) to (0,.4);
    \closeddot{0,0};
    \pinX{(0,0)}{}{(1.3,0)}{(u+x)^{-1}};
\end{tikzpicture}\;
= 
u^{-1}\: \begin{tikzpicture}[anchorbase]
	\draw[-] (0.08,-.4) to (0.08,.4);
\end{tikzpicture}
-
u^{-2}\: \begin{tikzpicture}[anchorbase]
	\draw[-] (0.08,-.4) to (0.08,.4);
    \closeddot{0.08,0};
\end{tikzpicture}
+
u^{-3}
\: \begin{tikzpicture}[anchorbase]
	\draw[-] (0.08,-.4) to (0.08,.4);
    \closeddot{0.08,.1};
    \closeddot{0.08,-.1};
\end{tikzpicture}
-
u^{-4}\: \begin{tikzpicture}[anchorbase]
	\draw[-] (0.08,-.4) to (0.08,.4);
    \closeddot{0.08,.2};
    \closeddot{0.08,0};
    \closeddot{0.08,-.2};
\end{tikzpicture}
+\cdots \in \End_{\cNB_t}(B)\llbracket u^{-1}\rrbracket.
\end{align}
These shorthand symbols behave well under $\tR$ and $\tT$ \cref{R,T}:
\begin{align}\label{RTdots}
\tR\left(
\begin{tikzpicture}[anchorbase]
	\draw[-] (0.08,-.3) to (0.08,.4);
\pinX{(0.08,0.05)}{a}{(.88,0.05)}{u};
\end{tikzpicture}\right) &=
\begin{tikzpicture}[anchorbase]
	\draw[-] (0.08,-.3) to (0.08,.4);
\pinX{(0.08,0.05)}{-a}{(-.72,0.05)}{u};
\end{tikzpicture}\:,&
\tT\left(
\begin{tikzpicture}[anchorbase]
	\draw[-] (0.08,-.3) to (0.08,.4);
\pinX{(0.08,0.05)}{a}{(.88,0.05)}{u};
\end{tikzpicture}\right) &=
\begin{tikzpicture}[anchorbase]
	\draw[-] (0.08,-.3) to (0.08,.4);
\pinX{(0.08,0.05)}{a}{(.88,0.05)}{u};
\end{tikzpicture}\:.
\end{align}

\begin{lemma}
The following relations hold in $\cNB_t$:
\begin{align}
\label{rels9}
\begin{tikzpicture}[anchorbase,scale=1.1]
	\draw[-] (0.4,0) to[out=90, in=0] (0.1,0.5);
	\draw[-] (0.1,0.5) to[out = 180, in = 90] (-0.2,0);
\pinX{(0.4,.2)}{a}{(1.2,.2)}{u};
\end{tikzpicture}
&
=
\begin{tikzpicture}[anchorbase,scale=1.1]
	\draw[-] (0.4,0) to[out=90, in=0] (0.1,0.5);
	\draw[-] (0.1,0.5) to[out = 180, in = 90] (-0.2,0);
\pinX{(-0.2,.2)}{-a}{(-1,.2)}{u};
\end{tikzpicture}\:,\\\label{rels9b}
\begin{tikzpicture}[anchorbase,scale=1.1]
	\draw[-] (0.4,0) to[out=-90, in=0] (0.1,-0.5);
	\draw[-] (0.1,-0.5) to[out = 180, in = -90] (-0.2,0);
\pinX{(0.4,-.2)}{a}{(1.2,-.2)}{u};
\end{tikzpicture}
&=
\begin{tikzpicture}[anchorbase,scale=1.1]
	\draw[-] (0.4,0) to[out=-90, in=0] (0.1,-0.5);
	\draw[-] (0.1,-0.5) to[out = 180, in = -90] (-0.2,0);
\pinX{(-0.2,-.2)}{-a}{(-1,-.2)}{u};
\end{tikzpicture}\:,
\\
\begin{tikzpicture}[anchorbase,scale=1.4]
	\draw[-] (0.25,.3) to (-0.25,-.3);
	\draw[-] (0.25,-.3) to (-0.25,.3);
	\pinX{(.12,-.145)}{a}{(.7,-.145)}{u};
\end{tikzpicture}
-\begin{tikzpicture}[anchorbase,scale=1.4]
	\draw[-] (0.25,.3) to (-0.25,-.3);
	\draw[-] (0.25,-.3) to (-0.25,.3);
	\pinX{(-.12,.145)}{a}{(-.7,.145)}{u};
\end{tikzpicture}
&=
a\:\begin{tikzpicture}[anchorbase,scale=1.4]
 	\draw[-] (0,-.3) to (0,.3);
	\draw[-] (-.3,-.3) to (-0.3,.3);
	\pinX{(0,-.15)}{a}{(.4,-.15)}{u};
	\pinX{(-.3,.15)}{a}{(-.7,.15)}{u};
\end{tikzpicture}\:-\:a\:\begin{tikzpicture}[anchorbase,scale=1.4]
 	\draw[-] (-0.15,-.3) to[out=90,in=180] (0,-.05) to[out=0,in=90] (0.15,-.3);
 	\draw[-] (-0.15,.3) to[out=-90,in=180] (0,.05) to[out=0,in=-90] (0.15,.3);
	\pinX{(-.12,.15)}{a}{(-.55,.15)}{u};
	\pinX{(.12,-.15)}{a}{(.55,-.15)}{u};
	\end{tikzpicture}
\:,
\label{rels10}\\
\label{rels10b}
\begin{tikzpicture}[anchorbase,scale=1.4]
	\draw[-] (0.25,.3) to (-0.25,-.3);
	\draw[-] (0.25,-.3) to (-0.25,.3);
	\pinX{(.12,.145)}{a}{(.7,.145)}{u};
\end{tikzpicture}
-\begin{tikzpicture}[anchorbase,scale=1.4]
	\draw[-] (0.25,.3) to (-0.25,-.3);
	\draw[-] (0.25,-.3) to (-0.25,.3);
	\pinX{(-.12,-.145)}{a}{(-.7,-.145)}{u};
\end{tikzpicture}
&=
a\:\begin{tikzpicture}[anchorbase,scale=1.4]
 	\draw[-] (0,-.3) to (0,.3);
	\draw[-] (-.3,-.3) to (-0.3,.3);
	\pinX{(0,.15)}{a}{(.4,.15)}{u};
	\pinX{(-.3,-.15)}{a}{(-.7,-.15)}{u};
\end{tikzpicture}\:-\:a\:\begin{tikzpicture}[anchorbase,scale=1.4]
 	\draw[-] (-0.15,-.3) to[out=90,in=180] (0,-.05) to[out=0,in=90] (0.15,-.3);
 	\draw[-] (-0.15,.3) to[out=-90,in=180] (0,.05) to[out=0,in=-90] (0.15,.3);
	\pinX{(-.12,-.15)}{a}{(-.55,-.15)}{u};
	\pinX{(.12,.15)}{a}{(.55,.15)}{u};
	\end{tikzpicture}
\:.
\end{align}
\end{lemma}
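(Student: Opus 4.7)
\emph{Plan.} All four identities are generating-function repackagings of relations already proved in \cref{rels4} and \cref{rels7}, so the proof is a bookkeeping exercise with formal power series in $u^{-1}$. The plan is to extract the coefficient of $u^{-r-1}$ on both sides and check that it follows from the one-dot relations by iteration.

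For \cref{rels9} and \cref{rels9b}, the strategy is to iterate the sign-change rule for sliding a dot across the critical point of a cap or cup, which is the last relation in \cref{rels4} (resp.\ \cref{rels7}). By an easy induction, $r$ dots on one side of the critical point equal $(-1)^r$ times $r$ dots on the other side. Multiplying by $u^{-r-1}$ and summing over $r \geq 0$, using the expansions \cref{ping,pong} and the fact that replacing $x$ by $-x$ in $(u \mp x)^{-1}$ interchanges the signs $+$ and $-$, converts the sign $a$ into $-a$ on the opposite side. This handles both $a=+$ and $a=-$ simultaneously.

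For \cref{rels10} and \cref{rels10b}, the strategy is a standard telescoping argument. Let $d_{\text{ab}}$ and $d_{\text{bl}}$ denote the operations of placing a dot above or below the crossing on the relevant strand. The first relation in \cref{rels4} says $d_{\text{ab}} - d_{\text{bl}}$ equals the two-term correction $(\text{pair of verticals}) - (\text{cap above cup})$ in which no dots remain. Applying the identity
\[
d_{\text{bl}}^r - d_{\text{ab}}^r \;=\; \sum_{k=0}^{r-1} d_{\text{ab}}^{r-1-k}\, d_{\text{bl}}^{k}\,(d_{\text{bl}} - d_{\text{ab}})
\]
and tracing where the $r-1-k$ remaining dots above and $k$ remaining dots below land in each correction diagram (the above-crossing dots on the NE strand migrate to the left strand of the vertical pair, or to the left half of the cap; the below-crossing dots migrate to the right strand of the vertical pair, or to the right half of the cup), one sees that the coefficient of $u^{-r-1}$ on the LHS of \cref{rels10} for $a=-$ matches the convolution sum that comes from expanding a product of two pins of sign $-$ at $u$ on the appropriate strands, with signs agreeing as computed. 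The case $a=+$ is identical after tracking the extra $(-1)^n$ factors coming from expanding $(u+x)^{-1}$, which factor as $(-1)^{n-k-1}(-1)^k(-1)$ and regroup into two pins of sign $+$ times an overall sign flip that converts the answer into the form shown. Relation \cref{rels10b} is obtained in exactly the same way starting from the first relation in \cref{rels7}, or alternatively by applying the pivotal structure $\tD$ to \cref{rels10}.

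The only step requiring care is the bookkeeping of which strand a migrated dot sits on in the two correction diagrams and the sign comparison in the $a=+$ case; once these are laid out, the relations drop out directly from expanding and re-collecting power series coefficients.
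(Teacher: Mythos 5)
Your argument for \cref{rels9,rels9b} is the same as the paper's: iterate the one-dot sign rule from \cref{rels4,rels7} to get that a polynomial $f(x)$ of dots on one side of a critical point equals $f(-x)$ on the other, then specialize to $f(x)=(u+ax)^{-1}$. For \cref{rels10} your route genuinely differs. The paper avoids all coefficient bookkeeping by vertically composing the claimed identity on top and bottom with the \emph{polynomial} pins $(u+ax)$ placed on the appropriate strands; since these are two-sided inverses to the $(u+ax)^{-1}$ pins, the generating-function identity is literally equivalent to the single one-dot relation in \cref{rels4}, and nothing needs to be expanded. Your telescoping identity $d_{\text{bl}}^r-d_{\text{ab}}^r=\sum_k d_{\text{ab}}^{r-1-k}d_{\text{bl}}^k(d_{\text{bl}}-d_{\text{ab}})$ plus the observation about where the surviving dots land on the two correction diagrams does reconstitute the convolution product of two pins with the right signs (and in fact the sign analysis is uniform in $a$, since $c_r=(-a)^ru^{-r-1}$ makes the double sum factor as $a\bigl(\sum_j c_j x^j\bigr)(V-N)\bigl(\sum_k c_k x^k\bigr)$ directly; you do not need to treat $a=+$ separately). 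So your proof is correct but longer; the paper's composition trick is what you should remember here, as it recurs for the analogous relations \cref{dumby3,dumby4} in $\fU(\sl_2)_\loc$.

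One genuine slip: \cref{rels10b} does \emph{not} follow from \cref{rels10} by applying the duality $\tD$. Since $\tD$ rotates diagrams through $180^\circ$, it maps the strand carrying the dots in \cref{rels10} back to itself (only exchanging the above- and below-crossing positions and negating the pin sign $a$), so $\tD$ returns an instance of \cref{rels10} rather than \cref{rels10b}, whose dots sit on the \emph{other} strand. The correct symmetry is $\tT$ (as the paper uses) or $\tR$, each of which reflects one strand onto the other. Your primary route for \cref{rels10b} --- rerunning the telescoping argument from the first relation of \cref{rels7} --- is fine, so this only affects the alternative you offer.
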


\begin{proof}
It is clear from the last relation in \cref{rels4} that
$\begin{tikzpicture}[anchorbase,scale=.9]
	\draw[-] (0.4,0) to[out=90, in=0] (0.1,0.5);
	\draw[-] (0.1,0.5) to[out = 180, in = 90] (-0.2,0);
\pinX{(.4,.2)}{}{(1.2,.2)}{f(x)};
\end{tikzpicture}
=
\begin{tikzpicture}[anchorbase,scale=.9]
	\draw[-] (0.4,0) to[out=90, in=0] (0.1,0.5);
	\draw[-] (0.1,0.5) to[out = 180, in = 90] (-0.2,0);
\pinX{(-.2,.2)}{}{(-1.2,.2)}{f(-x)};
\end{tikzpicture}\:$ and similarly for cups. The relations 
\cref{rels9,rels9b} follow. 
To prove \cref{rels10}, it suffices to establish the equivalent relation obtained by vertically composing on top with 
$
\begin{tikzpicture}[anchorbase,scale=.8]
\draw[-] (-.2,.3) to (-.2,-.3);
\draw[-] (.2,.3) to (.2,-.3);
\pinX{(-.2,0)}{}{(-1.2,0)}{u+ax};
\end{tikzpicture}
$
and on the bottom with
$\begin{tikzpicture}[anchorbase,scale=.8]
\draw[-] (-.2,.3) to (-.2,-.3);
\draw[-] (.2,.3) to (.2,-.3);
\pinX{(.2,0)}{}{(1.4,0)}{u+ax};
\end{tikzpicture}\:
$;
this equivalent relation follows immediately from the first relation from \cref{rels4}. Finally, the relation \cref{rels10b} follows on
applying $\tT$. 
\end{proof}

The pin notation gives the generating function
\begin{equation}
\begin{tikzpicture}[anchorbase]
\draw (-.2,0) circle (.2);
\pinX{(0,0)}{-}{(.7,0)}{u};
\end{tikzpicture}
= 
\sum_{r \geq 0} 
\begin{tikzpicture}[anchorbase]
\draw (0,0) circle (.2);
\closeddot{0.2,0};
\node at (0.4,0) {$\scriptstyle r$};
\end{tikzpicture}\:
u^{-r-1}
\in \End_{\cNB_t}(\one)\llbracket u^{-1}\rrbracket
\end{equation}
for ``dotted bubbles".
This is often useful, but even more important 
is 
the renormalization
\begin{equation}\label{deltadef}
\O(u)
 = \sum_{r \geq 0} \O_r u^{-r} := (-1)^{t}\left(1_\one - 2u \;
\begin{tikzpicture}[anchorbase]
\draw (0,0) circle (.2);
\pinX{(.2,0)}{-}{(.8,0)}{u};
\end{tikzpicture}
\right)
\in 1_\one + u^{-1} \End_{\cNB_t}(\one)\llbracket u^{-1}\rrbracket.
\end{equation}
Its coefficients are given explicitly by
\begin{align}
\O_0 &:= 1_\one,&
\O_r &:= -2(-1)^{t} \:\begin{tikzpicture}[anchorbase]
\draw (0,0) circle (.2);
\closeddot{.2,0};
\node at (.5,0) {$\scriptstyle r$};
\end{tikzpicture}
\end{align}
for $r \geq 1$. Note also by \cref{RTdots,rels9} that $\O(u)$ is invariant under both of the symmetries $\tR$ and $\tT$.

The following derives some further relations involving these generating functions. Yet more can then be obtained by applying $\tR$ and $\tT$.

\begin{theorem}\label{nbrelations}
The following relations hold in $\cNB_t$:
\begin{align}
\label{rels11}
2u \:\begin{tikzpicture}[anchorbase]
	\draw[-] (0,-.3)  to (0,-.1) to [out=90,in=180] (.3,.3) to [out=0,in=90] (.5,.1) to[out=-90,in=0] (.3,-.1) to [out=180,in=-90] (0,.3) to (0,.5);
	\pinX{(.5,.1)}{-}{(1.1,.1)}{u};
\end{tikzpicture}&=
2u\:
\begin{tikzpicture}[anchorbase]
	\draw[-] (0,-.3) to (0,.5);
 \draw (1.2,0.1) circle (.2);
\pinX{(1.4,.1)}{-}{(2,.1)}{u};
\pinX{(0,.1)}{+}{(.6,.1)}{u};
\end{tikzpicture}
-\:\begin{tikzpicture}[anchorbase]
	\draw[-] (0,-.3) to (0,.5);
	\pinX{(0,.1)}{-}{(.6,.1)}{u};
\end{tikzpicture}
\:-\:
\begin{tikzpicture}[anchorbase]
	\draw[-] (0,-.3) to (0,.5);
	\pinX{(0,.1)}{+}{(.6,.1)}{u};
\end{tikzpicture}
\:,\\\label{rels11a}
\begin{tikzpicture}[anchorbase]
 \draw (-.2,0.1) circle (.2);
	\pinX{(0,.1)}{-}{(.6,.1)}{u};
\end{tikzpicture}
\:+\:
\begin{tikzpicture}[anchorbase]
 \draw (-.2,0.1) circle (.2);
	\pinX{(0,.1)}{+}{(.6,.1)}{u};
\end{tikzpicture}
&=
2u\: \begin{tikzpicture}[anchorbase]
 \draw (-.2,0.1) circle (.2);
	\pinX{(0,.1)}{+}{(.6,.1)}{u};
\end{tikzpicture}
\:\:\begin{tikzpicture}[anchorbase]
 \draw (-.2,0.1) circle (.2);
	\pinX{(0,.1)}{-}{(.6,.1)}{u};
\end{tikzpicture}
\:,\\
\label{rels12}
\O(u)\O(-u)&=1_\one,\\\label{rels13}
\O(u)\;
\begin{tikzpicture}[anchorbase]
	\draw[-] (0,-.3) to (0,.5);
\end{tikzpicture}&=
\begin{tikzpicture}[anchorbase]
	\draw[-] (0,-.3) to (0,.5);
	\pinX{(0,.1)}{}{(-1,.1)}{\left(\frac{u-x}{u+x}\right)^2};
\end{tikzpicture}\;
\O(u)\:.
\end{align}
\end{theorem}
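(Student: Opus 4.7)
The plan is to prove the four relations in sequence, with \cref{rels11} as the main computational step and the remaining three following from it by algebraic manipulation.

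For \cref{rels11}, the pigtail diagram on the left-hand side contains an implicit self-crossing where the two arcs connecting the external strand to the loop must cross. I would apply the pin crossing relation \cref{rels10} with $a = -$ at this self-crossing, decomposing the pigtail into four pieces: a ``swapped'' crossing configuration where the pin has migrated to the opposite branch, together with the two correction terms from \cref{rels10}---a parallel-strand term and a cup-cap term, each decorated with pins. Using the zigzag relations \cref{rels2}, the curl-vanishing relations \cref{rels3}, \cref{rels5}, \cref{rels6}, and the pin-cup/cap identities \cref{rels9}, \cref{rels9b}, the swapped crossing collapses into strand-with-pin contributions, the parallel-strand correction yields the leading $(\text{bubble with $-$-pin}) \otimes (\text{strand with $+$-pin})$ term on the right-hand side, and the cup-cap correction closes via curl-vanishing into the remaining strand-with-pin contributions. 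After multiplying through by $2u$ and carefully tracking signs produced by \cref{rels4} and \cref{rels7} whenever a dot crosses a critical point of a cup or cap, we obtain \cref{rels11}. This is the main technical obstacle in the theorem.

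For \cref{rels11a}, I would close off the external strand of \cref{rels11} by composing with a cup on top and a cap on bottom, converting the strand into a second bubble; on the left-hand side the resulting double-bubble configuration collapses via the curl-vanishing relations to a single pinned bubble, and each term on the right-hand side transforms into a product of pinned bubbles, yielding \cref{rels11a} directly. For \cref{rels12}, write $B_a(u)$ for the pinned bubble with $a$-pin $u$; substituting the definition $\O(u) = (-1)^t(1_\one - 2u B_-(u))$ and using $B_-(-u) = -B_+(u)$ (which follows from the definitions \cref{ping} and \cref{pong}) gives $\O(-u) = (-1)^t(1_\one - 2u B_+(u))$. Expanding $\O(u)\O(-u) = 1_\one - 2u(B_- + B_+) + 4u^2 B_- B_+$ and applying \cref{rels11a} to substitute $B_- + B_+ = 2u B_+ B_-$ (valid because $\End_{\cNB_t}(\one)$ is commutative, so $B_+ B_- = B_- B_+$), the middle terms cancel against $4u^2 B_- B_+$, leaving $1_\one$.

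For \cref{rels13}, I would combine \cref{rels11} with its image under the horizontal reflection functor $\tR$, which by \cref{RTdots} swaps $+$- and $-$-pins and places the bubble on the opposite side of the strand. The two resulting identities express the pigtail in terms of (bubble on the right) composed with (strand with pin) and in terms of (strand with pin) composed with (bubble on the left), respectively. Equating these (after a planar rearrangement of the pigtail, justified using the isotopy-type relations \cref{rels1}, \cref{rels3}, \cref{rels5}, \cref{rels6}) and multiplying by $\O(u)^{-1} = \O(-u)$ obtained from \cref{rels12} concentrates all correction pins onto a single strand, producing the rational pin $((u-x)/(u+x))^2$. The square emerges because both copies of \cref{rels11} each contribute a factor of $(u-x)/(u+x)$.
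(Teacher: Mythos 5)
Your treatment of \cref{rels11}, \cref{rels11a} and \cref{rels12} follows the same route as the paper (resolve the self-crossing of the curl with \cref{rels10}, close off the free strand to obtain \cref{rels11a}, then expand $\O(u)\O(-u)$ and cancel using \cref{rels11a}), but two intermediate claims are wrong as stated. In \cref{rels11}, the ``swapped crossing'' term does not collapse into strand-with-pin contributions: once the pin has migrated past the crossing it sits on the free strand, so that term is a pinned identity composed with the \emph{undotted} curl, which vanishes by the first relation of \cref{rels3}. The two strand-with-pin terms on the right-hand side come instead from the cup--cap correction, which the zigzag relations \cref{rels2} retract onto a single strand carrying the pin $\frac{2u}{(u+x)(u-x)}$, split by partial fractions into $\frac{1}{u-x}+\frac{1}{u+x}$. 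Likewise, when you close off \cref{rels11}, the left-hand side must become $0$ (the closed curl contains a vanishing curl), not ``a single pinned bubble''; if it were a nonzero pinned bubble you would not get \cref{rels11a}. These are repairable slips, since the correct bookkeeping still lands on the stated relations.

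The argument for \cref{rels13}, however, has a genuine gap. You propose to equate the expansions of the pigtail obtained from \cref{rels11} and from its image under $\tR$, ``after a planar rearrangement of the pigtail, justified using the isotopy-type relations.'' But the right-facing curl with a $(-,u)$-pin on its loop and the left-facing curl with a $(+,u)$-pin on its loop are \emph{not} related by planar isotopy: isotopy invariance in $\cNB_t$ holds only for undotted diagrams, and these two morphisms in fact differ by a factor of $\frac{u+x}{u-x}$ on the free strand. Establishing that precise relation is the real content of the paper's proof: it starts from the vanishing of a strand double-crossing a dotted bubble (first relation of \cref{rels1}) to show that the right curl with a $(-,u)$-pin on the loop, composed with a $(+,u)$-pin on the free strand, is fixed by $\tR$, and only then expands with \cref{rels11}. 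If one simply equates the two pinned curls, the lone strand terms cancel and one obtains a bubble-slide with a single factor of $\frac{u-x}{u+x}$ rather than the square; the second factor comes from the compensating pin in the $\tR$-invariance identity, not from a ``second copy of \cref{rels11}.'' Without an argument pinning down the left-curl/right-curl discrepancy, the derivation of \cref{rels13} does not go through.
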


\begin{proof}
To prove \cref{rels11}, we have that
$$
2u \:\begin{tikzpicture}[anchorbase]
	\draw[-] (0,-.3)  to (0,-.1) to [out=90,in=180] (.3,.3) to [out=0,in=90] (.5,.1) to[out=-90,in=0] (.3,-.1) to [out=180,in=-90] (0,.3) to (0,.5);
	\pinX{(.5,.1)}{-}{(1.1,.1)}{u};
\end{tikzpicture}
\stackrel{\cref{rels9b}}{=}
2u\: \begin{tikzpicture}[anchorbase,scale=1.2]
	\draw[-] (0,-.4)  to (0,-.1) to [out=90,in=180] (.3,.3) to [out=0,in=90] (.5,.1) to[out=-90,in=0] (.3,-.1) to [out=180,in=-90] (0,.3) to (0,.6);
\bentpinX{(.14,-.05)}{+}{(.7,-.3)}{u}{(0.2,-.4)};
\end{tikzpicture}
\!\!\stackrel{\cref{rels10}}{=}
2u\:\begin{tikzpicture}[anchorbase,scale=1.2]
	\draw[-] (0,-.4) to (0,.6);
 \pinX{(0,0.4)}{+}{(-0.6,.4)}{u};
  \draw (.8,0.1) circle (.2);
 \bentpinX{(.6,.1)}{+}{(.3,-.3)}{u}{(.3,-.1)};
\end{tikzpicture}
\:-
2u\: \begin{tikzpicture}[anchorbase,scale=1.47]
 	\draw[-] (-0.15,-.4) to[out=90,in=180] (0,-.05) to[out=0,in=180] (0.15,-.2) to [out=0,in = -90] (.3,0);
 	\draw[-] (-0.15,.4) to[out=-90,in=180] (0,.05) to[out=0,in=180] (0.15,.2) to [out=0,in=90] (.3,0);
\pinX{(-.13,.1)}{+}{(-.6,.1)}{u};
\bentpinX{(.09,-.14)}{+}{(.6,-.3)}{u}{(0.2,-.4)};
\end{tikzpicture}\!\!
=
2u\:
\begin{tikzpicture}[anchorbase]
	\draw[-] (0,-.3) to (0,.5);
 \draw (1.2,0.1) circle (.2);
\pinX{(1.4,.1)}{-}{(2,.1)}{u};
\pinX{(0,.1)}{+}{(.6,.1)}{u};
\end{tikzpicture}\:-\:
\begin{tikzpicture}[anchorbase]
	\draw[-] (0,-.3) to (0,.5);
 \draw (1.2,0.1) circle (.2);
\pinX{(0,.1)}{}{(1.2,.1)}{\frac{2u}{(u+x)(u-x)}};
\end{tikzpicture}\:.
$$
The desired relation \cref{rels11} follows easily from this using $\frac{2u}{(u+x)(u-x)} = \frac{1}{u-x}+\frac{1}{u+x}$. 

Closing the free strings on the left in \cref{rels11} gives a new relation whose left-hand side is zero. This gives \cref{rels11a} immediately. Then \cref{rels12} follows
by substituting the definition of $\O(u)$ and $\O(-u)$ into the product $\O(u)\O(-u)$, multiplying out the brackets, and simplifying using \cref{rels11a}.

Finally, for the ``bubble slide" relation \cref{rels13},
we have that
\begin{align*}
0 &=
\begin{tikzpicture}[anchorbase,scale=1.5]
\draw(-.3,-.4) to[out=90,in = -90] (0,0) to [in=-90,out=90] (-.3,.4);
\draw (0,0) circle (.2);
\pinX{(.2,0)}{-}{(.7,0)}{u};
\end{tikzpicture}
=
\begin{tikzpicture}[anchorbase,scale=1.5]
\draw(.3,-.4) to[out=90,in = -90] (0,0) to [in=-90,out=90] (.3,.4);
\draw (0,0) circle (.2);
\pinX{(.2,0)}{-}{(.7,0)}{u};
\end{tikzpicture}
=
\begin{tikzpicture}[anchorbase,scale=1.5]
\draw(0,.35) to [out=0,in=90] (.3,.2) to [out=-90,in=0] (.2,0.05) to [out=180,in=-90] (0.1,0.15)
to [out=90,in=-90] (.3,.4);
\draw(.3,-.4) to [out=90,in=0] (.2,-.05) to[out=180,in=0] (0,-.25) to [out=180,in=-90] (-.2,0.1) to [out=90,in=180] (0,.35);
\bentpinX{(.12,-.15)}{-}{(.7,-.3)}{u}{(.4,-.2)};
\pinX{(.3,.15)}{-}{(.8,.15)}{u};
\end{tikzpicture}
\:-\:\begin{tikzpicture}[anchorbase,scale=1.5]
\draw(.3,-.4) to[out=90,in = 0] (-.1,.2) to [out=180,in = 90] (-.3,0) to [out=-90,in=180] (-.2,-.2) to [out=0,in=-90] (0,0)
to [out=90,in=-90] (.3,.4);
\bentpinX{(0,0)}{-}{(.1,-.33)}{u}{(.15,-.1)};
\pinX{(.24,-.05)}{-}{(.74,-.05)}{u};
\end{tikzpicture}\:.
\end{align*}
This shows that
$$
\begin{tikzpicture}[anchorbase,scale=1.2]
	\draw[-] (0,-.5)  to (0,-.1) to [out=90,in=180] (.3,.3) to [out=0,in=90] (.5,.1) to[out=-90,in=0] (.3,-.1) to [out=180,in=-90] (0,.3) to (0,.5);
 \pinX{(.5,0.1)}{-}{(1,0.1)}{u};
 \pinX{(0,-0.3)}{+}{(.5,-.3)}{u};
\end{tikzpicture}=
\begin{tikzpicture}[anchorbase,scale=1.2]
	\draw[-] (0,-.5)  to (0,-.1) to [out=90,in=0] (-.3,.3) to [out=180,in=90] (-.5,.1) to[out=-90,in=180] (-.3,-.1) to [out=0,in=-90] (0,.3) to (0,.5);
  \pinX{(-.5,0.1)}{+}{(-1,0.1)}{u};
 \pinX{(0,-0.3)}{-}{(-.5,-.3)}{u};
\end{tikzpicture}\:.
$$
Let $f$ denote the morphism on the left-hand side of this equation. The equation shows that $f$ is fixed by the symmetry $\tR$. Expanding the curl using \cref{rels11} gives that
\begin{equation*}
2uf
=
2u\begin{tikzpicture}[anchorbase]
	\draw[-] (0,-.7) to (0,.5);
 \draw (1.1,-0.1) circle (.2);
\pinX{(1.3,-.1)}{-}{(2,-.1)}{u};
\pinX{(0,.2)}{+}{(.6,.2)}{u};
\pinX{(0,-.4)}{+}{(.6,-.4)}{u};
\end{tikzpicture}
\:-\:
\begin{tikzpicture}[anchorbase]
	\draw[-] (0,-.7) to (0,.5);
\pinX{(0,.2)}{-}{(.6,.2)}{u};
\pinX{(0,-.4)}{+}{(.6,-.4)}{u};
\end{tikzpicture}\:-\:
\begin{tikzpicture}[anchorbase]
	\draw[-] (0,-.7) to (0,.5);
\pinX{(0,.2)}{+}{(.6,.2)}{u};
\pinX{(0,-.4)}{+}{(.6,-.4)}{u};
\end{tikzpicture}
\:.
\end{equation*}
So, using the definition of $\O(u)$,
we have that
$$
-(-1)^{t}\left(2uf+
\begin{tikzpicture}[anchorbase]
	\draw[-] (0,-.7) to (0,.5);
\pinX{(0,.2)}{-}{(.6,.2)}{u};
\pinX{(0,-.4)}{+}{(.6,-.4)}{u};
\end{tikzpicture}
\right)
=
\begin{tikzpicture}[anchorbase]
	\draw[-] (0,-.7) to (0,.5);
\pinX{(0,.2)}{+}{(.6,.2)}{u};
\pinX{(0,-.4)}{+}{(.6,-.4)}{u};
\end{tikzpicture}
\times
(-1)^t
\left(1_\one-2u\:
\begin{tikzpicture}[anchorbase,scale=1]
\draw (0,0) circle (.2);
\pinX{(.2,0)}{-}{(.7,0)}{u};
\end{tikzpicture}
\right)
=
\begin{tikzpicture}[anchorbase]
	\draw[-] (0,-.3) to (0,.5);
\pinX{(0,.1)}{}{(-1.2,.1)}{(u+x)^{-2}};
\end{tikzpicture}\;
\O(u).
$$
Since $f=\tR(f)$, the expression on the left-hand side of this equation is fixed by $\tR$, hence, so is the expression on the right-hand side.
Since
$\tR(\O(u)) = \O(u)$, this implies 
that
$$
\begin{tikzpicture}[anchorbase]
	\draw[-] (0,-.3) to (0,.5);
\pinX{(0,.1)}{}{(-1.2,.1)}{(u+x)^{-2}};
\end{tikzpicture}\;
\O(u)=
\O(u)\;
\begin{tikzpicture}[anchorbase]
	\draw[-] (0,-.3) to (0,.5);
\pinX{(0,.1)}{}{(1.2,.1)}{(u-x)^{-2}};
\end{tikzpicture}\:.
$$
The relation
\cref{rels13} follows immediately from this.
\end{proof}

Let $\Lambda$ be the algebra of 
symmetric functions over the ground ring $\kk$. 
Adopting standard notations, this is freely generated
either by the elementary symmetric functions $e_r\:(r > 0)$
or by the complete symmetric functions $h_r\:(r > 0)$.
The two families of generators are related by the identity
\begin{equation}\label{grassmannian}
e(-u) h(u) = 1
\end{equation}
where 
\begin{align}\label{genfuncs}
e(u) &= \sum_{r \geq 0} e_r u^{-r},&
h(u) &= \sum_{r \geq 0} h_r u^{-r}
\end{align}
are the corresponding generating functions, and $e_0 = h_0 = 1$ by convention.

Following \cite[Ch.~III, Sec.~8]{Mac}, we define
$q(u) \in \Lambda \llbracket u^{-1}\rrbracket$ 
and elements $q_r(r \geq 0)$ of $\Lambda$ so that
\begin{equation}\label{qdef}
q(u) = \sum_{r\geq 0} q_r u^{-r} := e(u)h(u).
\end{equation}
By \cref{grassmannian},
we have that 
\begin{equation}\label{qgrassmannian}
q(u) q(-u) = 1
\end{equation}
Equivalently, $q_0 = 1$ and
\begin{align}\label{muscles}
q_{2r} &= (-1)^{r-1}{\textstyle\frac{1}{2}}q_r^2 +
\sum_{s=1}^{r-1}(-1)^{s-1}q_s q_{2r-s}
\end{align}
for $r \geq 1$; cf. \cite[(III.8.2$'$)]{Mac}.
The subalgebra of $\Lambda$ generated by all $q_r\:(r \geq 0)$ is denoted
$\Gamma$. As explained in \cite{Mac}, 
$\Gamma$ is freely generated by $q_1,q_3,q_5,\dots$
(and it has a distinguished basis given by the {\em Schur $Q$-functions} $Q_\lambda$ indexed by all strict partitions).
It follows that $\Gamma$ is generated by 
the elements $q_r\:(r \geq 0)$ subject just to the relations
\cref{qgrassmannian}.
Hence, the relation \cref{rels12} from \cref{nbrelations} implies the following:

\begin{corollary}\label{gammacor}
There is a unique algebra homomorphism
$\gamma_t:\Gamma \rightarrow \End_{\cNB_t}(\one)$ such that $q_r
\mapsto \O_r$ for all $r \geq 0$.
\end{corollary}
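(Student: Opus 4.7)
The plan is straightforward given the setup already in place. The key observation is that the paper has just recalled (from \cite{Mac}) that $\Gamma$ is generated by the elements $q_r$ ($r \geq 0$) subject only to the relations $q_0 = 1$ together with the expansion of the single generating-function identity $q(u)q(-u) = 1$. So to produce the homomorphism $\gamma_t$, all I need to do is verify that the proposed images $\O_r$ satisfy exactly this same set of relations in $\End_{\cNB_t}(\one)$, and uniqueness will then be automatic since the $q_r$ generate $\Gamma$ as an algebra.

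First I would note that $\O_0 = 1_\one$ by the very definition of $\O(u)$ in \cref{deltadef}, matching $q_0 = 1$. Next, the defining relation $q(u)q(-u) = 1$ of $\Gamma$ is precisely mirrored by \cref{rels12} of \cref{nbrelations}, namely $\O(u)\O(-u) = 1_\one$. Expanding both sides as formal power series in $u^{-1}$, the coefficient-by-coefficient identities these two generating function equations impose on $\{q_r\}$ and $\{\O_r\}$ are identical. Therefore, by the universal property of a presentation by generators and relations, there is a unique algebra homomorphism $\gamma_t:\Gamma \to \End_{\cNB_t}(\one)$ with $\gamma_t(q_r) = \O_r$ for every $r \geq 0$.

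There is essentially no obstacle here: all of the real work (establishing \cref{rels12}) was carried out in \cref{nbrelations}, and the presentation of $\Gamma$ in terms of $q(u)q(-u) = 1$ is quoted from \cite{Mac}. The only thing to be careful about is that one is working in $\End_{\cNB_t}(\one)\llbracket u^{-1}\rrbracket$, so the passage from the generating-function equation to the individual relations \cref{muscles} on coefficients is just formal expansion, with no convergence issues to worry about.
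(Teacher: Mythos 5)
Your proposal is correct and follows exactly the paper's own argument: the paper derives the corollary directly from the fact that $\Gamma$ is presented by the generators $q_r$ subject only to \cref{qgrassmannian}, together with the relation $\O(u)\O(-u)=1_\one$ established in \cref{rels12}. Nothing further is needed.
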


We will show in \cref{bubblealgebra} below that the homomorphism $\gamma_t$ just constructed is actually an {\em isomorphism}.


\section{Relations in the 2-category \texorpdfstring{$\fU(\sl_2)$}{}}\label{sec3}

Next we recall the definition of the 
2-category
$\fU(\sl_2)$ following the approach of Lauda  \cite{Lauda}.
Working still over the ground ring $\kk$, 
$\fU(\sl_2)$ is the strict 2-category
with object set $\Z$,
generating
1-morphisms
$E 1_\lambda:\lambda \rightarrow \lambda+2$ and
$1_\lambda F:\lambda+2 \rightarrow \lambda$, whose identity $2$-morphisms will be represented by
the oriented strings $\,\begin{tikzpicture}[anchorbase]\draw[-to,thin] (0,-.2) to (0,.2);\node at (.2,0) {$\catlabel{\lambda}$};\end{tikzpicture}\,$
 and
$\,\begin{tikzpicture}[anchorbase]\draw[to-,thin] (0,-.2) to (0,.2);\node at (-.2,0) {$\catlabel{\lambda}$};\end{tikzpicture}\;$,
and generating 2-morphisms
\begin{align}\label{KMgens}
\begin{tikzpicture}[anchorbase,scale=1.4]
	\draw[-to,thin] (0.08,-.15) to (0.08,.3);
      \opendot{0.08,0.05};
      \node at (0.3,0.05) {$\catlabel{\lambda}$};
\end{tikzpicture}
&:E 1_\lambda \Rightarrow E 1_\lambda,&
\begin{tikzpicture}[anchorbase,scale=1.4]
	\draw[to-,thin] (0.3,-0.1) to[out=90, in=0] (0.1,0.2);
	\draw[-,thin] (0.1,0.2) to[out = 180, in = 90] (-0.1,-0.1);
      \node at (0.5,0.05) {$\catlabel{\lambda}$};
\end{tikzpicture}
&:E F 1_\lambda \Rightarrow 1_\lambda,
&\begin{tikzpicture}[anchorbase,scale=1.4]
	\draw[to-,thin] (0.3,0.2) to[out=-90, in=0] (0.1,-0.1);
	\draw[-,thin] (0.1,-0.1) to[out = 180, in = -90] (-0.1,0.2);
      \node at (0.5,0.05) {$\catlabel{\lambda}$};
\end{tikzpicture}&:1_\lambda \Rightarrow F E 1_\lambda,\\
&\text{(degree $2$)}&
&\text{(degree $1-\lambda$)}&
&\text{(degree $\lambda+1$)}\notag\\
\label{KMgens2}
\begin{tikzpicture}[anchorbase,scale=1.4]
	\draw[-to,thin] (0.18,-.15) to (-0.18,.3);
	\draw[-to,thin] (-0.18,-.15) to (0.18,.3);
      \node at (0.3,0.1) {$\catlabel{\lambda}$};
\end{tikzpicture}
&:E E 1_\lambda \Rightarrow E E 1_\lambda,&
\begin{tikzpicture}[anchorbase,scale=1.4]
	\draw[-,thin] (0.3,-0.1) to[out=90, in=0] (0.1,0.2);
	\draw[-to,thin] (0.1,0.2) to[out = 180, in = 90] (-0.1,-0.1);
      \node at (0.5,0.05) {$\catlabel{\lambda}$};
\end{tikzpicture}
&:F E 1_\lambda \Rightarrow 1_\lambda,&
\begin{tikzpicture}[anchorbase,scale=1.4]
	\draw[-,thin] (0.3,0.2) to[out=-90, in=0] (0.1,-0.1);
	\draw[-to,thin] (0.1,-0.1) to[out = 180, in = -90] (-0.1,0.2);
      \node at (0.5,0.1) {$\catlabel{\lambda}$};
\end{tikzpicture}
&:1_\lambda \Rightarrow E F 1_\lambda,
\\\notag
&\text{(degree $-2$)}&
&\text{(degree $\lambda+1$)}&
&\text{(degree $1-\lambda$)}
\end{align}
for all $\lambda \in \Z$, subject to certain relations below. We refer to the dots in this setting as {\em open dots} to distinguish them from the (closed) dots in the nil-Brauer category in the previous section. 

\begin{remark}\label{KMgrading}
The 2-category $\fU(\sl_2)$ also admits
a grading making it into a strict graded 2-category. This is defined by declaring that the generating 2-morphisms are of the degrees specified in parentheses. This grading will usually be ignored, but it crops up again in \cref{grrem},
and it will be needed in the final section 
since we need there to pass to the completion of $\fU(\sl_2)$ with respect to the grading.
\end{remark}
To write down the relations, we denote the $r$th power of the dot under vertical composition simply by labeling it with the natural number $r$.
We introduce rightward and leftward crossings
by setting
\begin{align}\label{KMsideways}
\begin{tikzpicture}[anchorbase]
	\draw[to-,thin] (0.28,-.3) to (-0.28,.4);
	\draw[-to,thin] (-0.28,-.3) to (0.28,.4);
   \node at (0.5,0.05) {$\catlabel{\lambda}$};
\end{tikzpicture}
&:=
\begin{tikzpicture}[anchorbase]
	\draw[-to,thin] (0.3,-.5) to (-0.3,.5);
	\draw[-,thin] (-0.2,-.2) to (0.2,.3);
        \draw[-,thin] (0.2,.3) to[out=50,in=180] (0.5,.5);
        \draw[-to,thin] (0.5,.5) to[out=0,in=90] (0.9,-.5);
        \draw[-,thin] (-0.2,-.2) to[out=230,in=0] (-0.6,-.5);
        \draw[-,thin] (-0.6,-.5) to[out=180,in=-90] (-0.9,.5);
   \node at (1.1,0) {$\catlabel{\lambda}$};
\end{tikzpicture}\:,&
\begin{tikzpicture}[anchorbase]
	\draw[-to,thin] (0.28,-.3) to (-0.28,.4);
	\draw[to-,thin] (-0.28,-.3) to (0.28,.4);
   \node at (0.5,0.05) {$\catlabel{\lambda}$};
\end{tikzpicture}
&:=
\begin{tikzpicture}[anchorbase]
	\draw[to-,thin] (0.3,.5) to (-0.3,-.5);
	\draw[-,thin] (-0.2,.2) to (0.2,-.3);
        \draw[-,thin] (0.2,-.3) to[out=130,in=180] (0.5,-.5);
        \draw[-,thin] (0.5,-.5) to[out=0,in=270] (0.9,.5);
        \draw[-,thin] (-0.2,.2) to[out=130,in=0] (-0.6,.5);
        \draw[-to,thin] (-0.6,.5) to[out=180,in=-270] (-0.9,-.5);
   \node at (1.1,0) {$\catlabel{\lambda}$};
\end{tikzpicture}\:,
\\&\text{(degree $0$)}&&\text{(degree $0$)}\notag
\end{align}
and use the following shorthands:
\begin{align}
\begin{tikzpicture}[anchorbase]
  \draw[to-,thin] (0,0.4) to[out=180,in=90] (-.2,0.2);
  \draw[-,thin] (0.2,0.2) to[out=90,in=0] (0,.4);
 \draw[-,thin] (-.2,0.2) to[out=-90,in=180] (0,0);
  \draw[-,thin] (0,0) to[out=0,in=-90] (0.2,0.2);
      \node at (0,0.2) {$\kmlabel{n}$};
   \node at (.38,0.2) {$\catlabel{\lambda}$};
\end{tikzpicture}
&:=
\left\{\begin{array}{ll}
(-1)^n\det\left(
\:\begin{tikzpicture}[anchorbase]
  \draw[-to,thin] (0.2,0.2) to[out=90,in=0] (0,.4);
  \draw[-,thin] (0,0.4) to[out=180,in=90] (-.2,.2);
\draw[-,thin] (-.2,0.2) to[out=-90,in=180] (0,0);
  \draw[-,thin] (0,0) to[out=0,in=-90] (0.2,0.2);
   \opendot{.2,.2};
     \node at (.75,0.2) {$\kmlabel{r-s-\lambda}$};
   \node at (-.38,0.2) {$\catlabel{\lambda}$};
\end{tikzpicture}
\right)_{r,s=1,\dots,n}&\text{if $0 \leq n \leq -\lambda$}\\
0&\text{if $n < 0$ or $n > -\lambda$},
\end{array}\right.
\label{KMbubbles2}
\\\notag &\text{(degree $2n$)}\\
\label{KMbubbles1}
\begin{tikzpicture}[anchorbase]
  \draw[-to,thin] (0.2,0.2) to[out=90,in=0] (0,.4);
  \draw[-,thin] (0,0.4) to[out=180,in=90] (-.2,0.2);
\draw[-,thin] (-.2,0.2) to[out=-90,in=180] (0,0);
  \draw[-,thin] (0,0) to[out=0,in=-90] (0.2,0.2);
     \node at (0,0.2) {$\kmlabel{n}$};
   \node at (-.38,0.2) {$\catlabel{\lambda}$};
\end{tikzpicture}
&:=\left\{
\begin{array}{ll}
(-1)^n \det\left(
\:
\begin{tikzpicture}[anchorbase]
  \draw[to-,thin] (0,0.4) to[out=180,in=90] (-.2,0.2);
  \draw[-,thin] (0.2,0.2) to[out=90,in=0] (0,.4);
 \draw[-,thin] (-.2,0.2) to[out=-90,in=180] (0,0);
  \draw[-,thin] (0,0) to[out=0,in=-90] (0.2,0.2);
   \opendot{-.2,.2};
     \node at (-.75,0.2) {$\kmlabel{r-s+\lambda}$};
   \node at (.38,0.2) {$\catlabel{\lambda}$};
\end{tikzpicture}\:
\right)_{r,s=1,\dots,n}
&\text{if $0 \leq n \leq \lambda$}\\
0&\text{if $n < 0$ or $n > \lambda$.}
\end{array}\right.\\
\notag &\text{(degree $2n$)}
\end{align}
Then the defining relations are as follows:
\begin{align}\label{KMrels1}
\begin{tikzpicture}[anchorbase,scale=.8]
	\draw[-to,thin] (0.28,.4) to[out=90,in=-90] (-0.28,1);
	\draw[-to,thin] (-0.28,.4) to[out=90,in=-90] (0.28,1);
	\draw[-,thin] (0.28,-.2) to[out=90,in=-90] (-0.28,.4);
	\draw[-,thin] (-0.28,-.2) to[out=90,in=-90] (0.28,.4);
   \node at (.43,.4) {$\catlabel{\lambda}$};
\end{tikzpicture}
&=
0,&
\begin{tikzpicture}[anchorbase,scale=.8]
	\draw[to-,thin] (0.45,.8) to (-0.45,-.4);
	\draw[-to,thin] (0.45,-.4) to (-0.45,.8);
        \draw[-,thin] (0,-.4) to[out=90,in=-90] (-.45,0.2);
        \draw[-to,thin] (-0.45,0.2) to[out=90,in=-90] (0,0.8);
   \node at (.5,.2) {$\catlabel{\lambda}$};
\end{tikzpicture}
&=
\begin{tikzpicture}[anchorbase,scale=.8]
	\draw[to-,thin] (0.45,.8) to (-0.45,-.4);
	\draw[-to,thin] (0.45,-.4) to (-0.45,.8);
        \draw[-,thin] (0,-.4) to[out=90,in=-90] (.45,0.2);
        \draw[-to,thin] (0.45,0.2) to[out=90,in=-90] (0,0.8);
   \node at (.7,.2) {$\catlabel{\lambda}$};
\end{tikzpicture}\:,
&
\begin{tikzpicture}[anchorbase,scale=1.2]
 	\draw[to-,thin] (0.25,.55) to (-0.25,-.15);
	\draw[-to,thin] (0.25,-.15) to (-0.25,.55);
  \node at (.3,.25) {$\catlabel{\lambda}$};
      \opendot{-0.13,0.38};
      \end{tikzpicture}
-
\begin{tikzpicture}[anchorbase,scale=1.2]
	\draw[to-,thin] (0.25,.55) to (-0.25,-.15);
	\draw[-to,thin] (0.25,-.15) to (-0.25,.55);
  \node at (.3,.25) {$\catlabel{\lambda}$};
      \opendot{0.13,0.01};
      \end{tikzpicture}
&=
\begin{tikzpicture}[anchorbase,scale=1.2]
	\draw[to-,thin] (0.25,.55) to (-0.25,-.15);
	\draw[-to,thin] (0.25,-.15) to (-0.25,.55);
  \node at (.3,.25) {$\catlabel{\lambda}$};
      \opendot{-0.13,0.01};
\end{tikzpicture}
-\begin{tikzpicture}[anchorbase,scale=1.2]
	\draw[to-,thin] (0.25,.55) to (-0.25,-.15);
	\draw[-to,thin] (0.25,-.15) to (-0.25,.55);
  \node at (.3,.25) {$\catlabel{\lambda}$};
      \opendot{0.13,0.38};
\end{tikzpicture}=
\begin{tikzpicture}[anchorbase,scale=1.2]
 	\draw[-to,thin] (0.08,-.3) to (0.08,.4);
	\draw[-to,thin] (-0.28,-.3) to (-0.28,.4);
 \node at (.28,.06) {$\catlabel{\lambda}$};
\end{tikzpicture}\:,\\
\label{KMrels2}
&&
\begin{tikzpicture}[anchorbase]
  \draw[-to,thin] (0.3,0) to (0.3,.4);
	\draw[-,thin] (0.3,0) to[out=-90, in=0] (0.1,-0.4);
	\draw[-,thin] (0.1,-0.4) to[out = 180, in = -90] (-0.1,0);
	\draw[-,thin] (-0.1,0) to[out=90, in=0] (-0.3,0.4);
	\draw[-,thin] (-0.3,0.4) to[out = 180, in =90] (-0.5,0);
  \draw[-,thin] (-0.5,0) to (-0.5,-.4);
   \node at (0.5,0) {$\catlabel{\lambda}$};
\end{tikzpicture}
&=
\begin{tikzpicture}[anchorbase]
  \draw[-to,thin] (0,-0.4) to (0,.4);
   \node at (0.2,0) {$\catlabel{\lambda}$};
\end{tikzpicture}\:,&
\begin{tikzpicture}[anchorbase]
  \draw[-to,thin] (0.3,0) to (0.3,-.4);
	\draw[-,thin] (0.3,0) to[out=90, in=0] (0.1,0.4);
	\draw[-,thin] (0.1,0.4) to[out = 180, in = 90] (-0.1,0);
	\draw[-,thin] (-0.1,0) to[out=-90, in=0] (-0.3,-0.4);
	\draw[-,thin] (-0.3,-0.4) to[out = 180, in =-90] (-0.5,0);
  \draw[-,thin] (-0.5,0) to (-0.5,.4);
   \node at (0.5,0) {$\catlabel{\lambda}$};
\end{tikzpicture}
&=
\begin{tikzpicture}[anchorbase]
  \draw[to-,thin] (0,-0.4) to (0,.4);
   \node at (0.2,0) {$\catlabel{\lambda}$};
\end{tikzpicture}\:,
\end{align}
\begin{align}\label{KMrels3}
\begin{tikzpicture}[anchorbase,scale=.95]
	\draw[-,thin] (0.28,0) to[out=90,in=-90] (-0.28,.7);
	\draw[-to,thin] (-0.28,0) to[out=90,in=-90] (0.28,.7);
	\draw[-,thin] (0.28,-.7) to[out=90,in=-90] (-0.28,0);
	\draw[to-,thin] (-0.28,-.7) to[out=90,in=-90] (0.28,0);
  \node at (.5,0) {$\catlabel{\lambda}$};
\end{tikzpicture}
&=
-
\begin{tikzpicture}[anchorbase,scale=.95]
	\draw[-to,thin] (0.2,-.7) to (0.2,.7);
	\draw[to-,thin] (-0.3,-.7) to (-0.3,.7);
   \node at (.35,0) {$\catlabel{\lambda}$};
\end{tikzpicture}
+\sum_{n=0}^{-\lambda-1}\!\!\!
\sum_{\substack{r,s\geq 0\\r+s=-\lambda-1-n}}\!\!
\begin{tikzpicture}[anchorbase,scale=.95]
	\draw[-,thin] (0.3,-0.7) to[out=90, in=0] (0,-0.1);
	\draw[-to,thin] (0,-0.1) to[out = 180, in = 90] (-0.3,-.7);
   \opendot{0.28,-0.5};
      \node at (.5,-.5) {$\kmlabel{s}$};
      \clockwisebubble{(.53,0)};
      \node at (.53,0) {$\kmlabel{n}$};
   \node at (-0.45,0) {$\catlabel{\lambda}$};
	\draw[to-,thin] (0.3,.7) to[out=-90, in=0] (0,0.1);
	\draw[-,thin] (0,0.1) to[out = -180, in = -90] (-0.3,.7);
   \opendot{.28,0.5};
      \node at (0.5,0.5) {$\kmlabel{r}$};
\end{tikzpicture}\:,
&\begin{tikzpicture}[anchorbase,scale=.95]
	\draw[-to,thin] (0.28,0) to[out=90,in=-90] (-0.28,.7);
	\draw[-,thin] (-0.28,0) to[out=90,in=-90] (0.28,.7);
	\draw[to-,thin] (0.28,-.7) to[out=90,in=-90] (-0.28,0);
	\draw[-,thin] (-0.28,-.7) to[out=90,in=-90] (0.28,0);
  \node at (.5,0) {$\catlabel{\lambda}$};
\end{tikzpicture}
&=
-
\begin{tikzpicture}[anchorbase,scale=.95]
	\draw[to-,thin] (0.2,-.7) to (0.2,.7);
	\draw[-to,thin] (-0.3,-.7) to (-0.3,.7);
   \node at (.35,.05) {$\catlabel{\lambda}$};
\end{tikzpicture}
+
\sum_{n=0}^{\lambda-1}\!\!\!
\sum_{\substack{r,s\geq 0\\r+s=\lambda-1-n}}\!\!
\begin{tikzpicture}[anchorbase,scale=.95]
	\draw[-,thin] (0.3,0.7) to[out=-90, in=0] (0,0.1);
	\draw[-to,thin] (0,0.1) to[out = 180, in = -90] (-0.3,0.7);
    \node at (0.4,0) {$\catlabel{\lambda}$};
    \anticlockwisebubble{(-.7,0)};
      \node at (-.7,0) {$\kmlabel{n}$};
   \opendot{-0.28,0.45};
      \node at (-0.5,0.45) {$\kmlabel{r}$};
	\draw[to-,thin] (0.3,-.7) to[out=90, in=0] (0,-0.1);
	\draw[-,thin] (0,-0.1) to[out = 180, in = 90] (-0.3,-.7);
   \opendot{-0.28,-0.5};
   \node at (-.5,-.5) {$\kmlabel{s}$};
\end{tikzpicture}\:,
\end{align}
\begin{align}\label{KMrels4}
\begin{tikzpicture}[anchorbase,scale=.8]
	\draw[to-,thin] (0,0.6) to (0,0.3);
	\draw[-,thin] (0,0.3) to [out=-90,in=180] (.3,-0.2);
	\draw[-,thin] (0.3,-0.2) to [out=0,in=-90](.5,0);
	\draw[-,thin] (0.5,0) to [out=90,in=0](.3,0.2);
	\draw[-,thin] (0.3,.2) to [out=180,in=90](0,-0.3);
	\draw[-,thin] (0,-0.3) to (0,-0.6);
   \node at (0.9,0) {$\catlabel{\lambda}$};
\end{tikzpicture}
&=
-\delta_{\lambda,0}
\begin{tikzpicture}[anchorbase,scale=.8]
	\draw[to-,thin] (0,0.6) to (0,-0.6);
     \node at (0.2,0) {$\catlabel{\lambda}$};
\end{tikzpicture}\:
\text{ if $\lambda \geq 0$,}&
\begin{tikzpicture}[anchorbase,scale=.8]
	\draw[to-,thin] (0,0.6) to (0,0.3);
	\draw[-,thin] (0,0.3) to [out=-90,in=0] (-.3,-0.2);
	\draw[-,thin] (-0.3,-0.2) to [out=180,in=-90](-.5,0);
	\draw[-,thin] (-0.5,0) to [out=90,in=180](-.3,0.2);
	\draw[-,thin] (-0.3,.2) to [out=0,in=90](0,-0.3);
	\draw[-,thin] (0,-0.3) to (0,-0.6);
     \node at (-0.9,0) {$\catlabel{\lambda}$};
\end{tikzpicture}
&=
\delta_{\lambda,0}
\:
\begin{tikzpicture}[anchorbase,scale=.8]
	\draw[to-,thin] (0,0.6) to (0,-0.6);
   \node at (-0.2,0) {$\catlabel{\lambda}$};
\end{tikzpicture}\:
\text{ if $\lambda \leq 0$},
\end{align}
\begin{align}\label{KMrels5}
\begin{tikzpicture}[anchorbase]
  \draw[to-,thin] (0,0.4) to[out=180,in=90] (-.2,0.2);
  \draw[-,thin] (0.2,0.2) to[out=90,in=0] (0,.4);
 \draw[-,thin] (-.2,0.2) to[out=-90,in=180] (0,0);
  \draw[-,thin] (0,0) to[out=0,in=-90] (0.2,0.2);
     \node at (0.36,0.2) {$\catlabel{\lambda}$};
\opendot{-0.2,.2};
\node at (-.4,0.2) {$\kmlabel{n}$};
\end{tikzpicture}
&=
\delta_{n,\lambda-1}
 \:1_{1_\lambda}\:
\text{ for all $0 \leq n <\lambda$,}&
\begin{tikzpicture}[anchorbase]
  \draw[-to,thin] (0.2,0.2) to[out=90,in=0] (0,.4);
  \draw[-,thin] (0,0.4) to[out=180,in=90] (-.2,0.2);
\draw[-,thin] (-.2,0.2) to[out=-90,in=180] (0,0);
  \draw[-,thin] (0,0) to[out=0,in=-90] (0.2,0.2);
\opendot{0.2,.2};
\node at (.4,0.2) {$\kmlabel{n}$};
       \node at (-0.4,0.2) {$\catlabel{\lambda}$};
\end{tikzpicture}
&=
\delta_{n,-\lambda-1}
\: 1_{1_\lambda} 
\text{ for all $0 \leq n < -\lambda$.}
\end{align}

A more efficient presentation for $\fU(\sl_2)$ was given by Rouquier in \cite{Rou}.
To formulate this,
we just need the generating morphisms
$\textupdot{\color{catcolor}\scriptstyle\lambda}$,
$\textupcrossing{\scriptstyle\color{catcolor}\lambda}$,
$\textrightcup{\scriptstyle\color{catcolor}\lambda}$ and
$\textrightcap{\scriptstyle\color{catcolor}\lambda}$ 
(hence, we also have the rightward crossings defined via \cref{KMsideways}),
subject to the relations \cref{KMrels1}, the zigzag identities from \cref{KMrels2}, and the new
{\em inversion relation} which asserts that the following matrices are invertible in the additive envelope of $\fU(\sl_2)$:
\begin{align}\label{inversionrel}
\left[\:\,
\begin{tikzpicture}[anchorbase]
	\draw[to-,thin] (0.28,-.3) to (-0.28,.4);
	\draw[-to,thin] (-0.28,-.3) to (0.28,.4);
   \node at (.4,.05) {$\catlabel{\lambda}$};
\end{tikzpicture}\:
\begin{tikzpicture}[anchorbase]
	\draw[to-,thin] (0.4,0.2) to[out=-90, in=0] (0.1,-.2);
	\draw[-,thin] (0.1,-.2) to[out = 180, in = -90] (-0.2,0.2);
  \node at (0.3,-0.25) {$\catlabel{\lambda}$};
      \node at (.85,0) {$\kmlabel{-\lambda-1}$};
      \opendot{0.38,0};
\end{tikzpicture}
\:\cdots
\:\begin{tikzpicture}[anchorbase]
	\draw[to-,thin] (0.4,0.2) to[out=-90, in=0] (0.1,-.2);
	\draw[-,thin] (0.1,-.2) to[out = 180, in = -90] (-0.2,0.2);
      \opendot{0.38,0};
  \node at (0.3,-0.25) {$\catlabel{\lambda}$};
\end{tikzpicture}
\:\:
\begin{tikzpicture}[anchorbase]
	\draw[to-,thin] (0.4,0.2) to[out=-90, in=0] (0.1,-.2);
	\draw[-,thin] (0.1,-.2) to[out = 180, in = -90] (-0.2,0.2);
  \node at (0.3,-0.25) {$\catlabel{\lambda}$};
\end{tikzpicture}
\:\right]
&
:E F 1_\lambda \oplus 
1_\lambda^{\oplus -\lambda}
\Rightarrow
 F E 1_\lambda&&\text{if $\lambda \leq
  0$,}\\
\left[\begin{array}{r}
\begin{tikzpicture}[anchorbase]
	\draw[to-,thin] (0.28,-.3) to (-0.28,.4);
	\draw[-to,thin] (-0.28,-.3) to (0.28,.4);
   \node at (.4,.05) {$\catlabel{\lambda}$};
\end{tikzpicture}
\\
\!\!\!\!\begin{tikzpicture}[anchorbase]
	\draw[to-,thin] (0.4,0) to[out=90, in=0] (0.1,0.4);
	\draw[-,thin] (0.1,0.4) to[out = 180, in = 90] (-0.2,0);
     \node at (0.3,0.5) {$\catlabel{\lambda}$};
      \node at (-0.5,0.2) {$\kmlabel{\lambda-1}$};
      \opendot{-0.15,0.2};
      \end{tikzpicture}
\\\vdots\:\:\:\:\\
\begin{tikzpicture}[anchorbase]
	\draw[to-,thin] (0.4,0) to[out=90, in=0] (0.1,0.4);
	\draw[-,thin] (0.1,0.4) to[out = 180, in = 90] (-0.2,0);
      \opendot{-0.15,0.2};
  \node at (0.3,0.5) {$\catlabel{\lambda}$};
\end{tikzpicture}
\\\begin{tikzpicture}[anchorbase]
	\draw[to-,thin] (0.4,0) to[out=90, in=0] (0.1,0.4);
	\draw[-,thin] (0.1,0.4) to[out = 180, in = 90] (-0.2,0);
  \node at (0.3,0.5) {$\catlabel{\lambda}$};
\end{tikzpicture}
\end{array}
\right]
&:
E F 1_\lambda \Rightarrow
F E 1_\lambda \oplus 1_\lambda^{\oplus \lambda}
&&\text{if $\lambda \geq 0$}.\label{day3}
\end{align}
The equivalence of Rouquier's presentation with Lauda's one from the previous paragraph was established in \cite{Brundan}.
It is shown there that
the two-sided 
inverse of the matrix \cref{inversionrel} is the $(1-\lambda)\times 1$ matrix
with first entry $-\begin{tikzpicture}[anchorbase]
	\draw[-to,thin] (0.28,-.3) to (-0.28,.4);
	\draw[to-,thin] (-0.28,-.3) to (0.28,.4);
   \node at (.4,.05) {$\catlabel{\lambda}$};
\end{tikzpicture}
$ and $(n+2)$th entry equal to
$\:
\begin{tikzpicture}[anchorbase]
	\draw[-,thin] (0.4,0) to[out=90, in=0] (0.1,0.5);
	\draw[-to,thin] (0.1,0.5) to[out = 180, in = 90] (-0.2,0);
     \node at (-0.4,0.2) {$\catlabel{\lambda}$};
      \opendot{.36,.2};      \node at (.55,.2) {$\kmlabel{n}$};
\end{tikzpicture}
      +
\begin{tikzpicture}[anchorbase]
	\draw[-,thin] (0.4,0) to[out=90, in=0] (0.1,0.5);
	\draw[-to,thin] (0.1,0.5) to[out = 180, in = 90] (-0.2,0);
     \node at (-0.4,0.2) {$\catlabel{\lambda}$};
\clockwisebubble{(1.3,.2)};\node at (1.3,.2) {$\kmlabel{1}$};
      \opendot{.36,.2};      \node at (.75,.2) {$\kmlabel{n-1}$};
      \end{tikzpicture}
+      \cdots
      +
\begin{tikzpicture}[anchorbase]
	\draw[-,thin] (0.4,0) to[out=90, in=0] (0.1,0.5);
	\draw[-to,thin] (0.1,0.5) to[out = 180, in = 90] (-0.2,0);
     \node at (-0.4,0.2) {$\catlabel{\lambda}$};
\clockwisebubble{(.8,.2)};\node at (.8,.2) {$\kmlabel{n}$};
      \end{tikzpicture}\:
$
for $0 \leq n \leq -\lambda-1$, and
the two-sided inverse of the matrix \cref{day3} is the $1 \times (\lambda+1)$ matrix
with first entry
$-\begin{tikzpicture}[anchorbase]
	\draw[-to,thin] (0.28,-.3) to (-0.28,.4);
	\draw[to-,thin] (-0.28,-.3) to (0.28,.4);
   \node at (.4,.05) {$\catlabel{\lambda}$};
\end{tikzpicture}$
and $(n+2)$th entry equal to
$\:
\begin{tikzpicture}[anchorbase]
	\draw[-,thin] (0.4,0.3) to[out=-90, in=0] (0.1,-.2);
	\draw[-to,thin] (0.1,-.2) to[out = 180, in = -90] (-0.2,0.3);
     \node at (0.6,0.1) {$\catlabel{\lambda}$};
      \opendot{-.17,.05};      \node at (-.4,.05) {$\kmlabel{n}$};
\end{tikzpicture}
      +
\begin{tikzpicture}[anchorbase]
	\draw[-,thin] (0.4,0.3) to[out=-90, in=0] (0.1,-.2);
	\draw[-to,thin] (0.1,-.2) to[out = 180, in = -90] (-0.2,0.3);
     \node at (0.6,0.1) {$\catlabel{\lambda}$};
\anticlockwisebubble{(-1.2,0.05)};\node at (-1.2,0.05) {$\kmlabel{1}$};
      \opendot{-.17,.05};      \node at (-.55,.05) {$\kmlabel{n-1}$};
      \end{tikzpicture}
+      \cdots
      +
\begin{tikzpicture}[anchorbase]
	\draw[-,thin] (0.4,0.3) to[out=-90, in=0] (0.1,-.2);
	\draw[-to,thin] (0.1,-.2) to[out = 180, in = -90] (-0.2,0.3);
     \node at (0.6,0.1) {$\catlabel{\lambda}$};
\anticlockwisebubble{(-.7,0.05)};\node at (-.7,0.05) {$\kmlabel{n}$};
      \end{tikzpicture}\:$
for $0 \leq n \leq \lambda-1$.

We define the downward open dot ${\scriptstyle\color{catcolor}\lambda}\textdowndot$
 and the downward 
crossing ${\scriptstyle\color{catcolor}\lambda}\textdowncrossing$ to be the right mates
of $\textupdot{\scriptstyle\color{catcolor}\lambda}$ and $\textupcrossing{\scriptstyle\color{catcolor}\lambda}$, respectively.
The defining relations for $\fU(\sl_2)$ imply that
\begin{align}\label{otheradjunction}
\begin{tikzpicture}[anchorbase]
  \draw[-,thin] (0.3,0) to (0.3,-.4);
	\draw[-,thin] (0.3,0) to[out=90, in=0] (0.1,0.4);
	\draw[-,thin] (0.1,0.4) to[out = 180, in = 90] (-0.1,0);
	\draw[-,thin] (-0.1,0) to[out=-90, in=0] (-0.3,-0.4);
	\draw[-,thin] (-0.3,-0.4) to[out = 180, in =-90] (-0.5,0);
  \draw[-to,thin] (-0.5,0) to (-0.5,.4);
     \node at (0.5,0) {$\catlabel{\lambda}$};
\end{tikzpicture}
&=
\begin{tikzpicture}[anchorbase]
  \draw[-to,thin] (0,-0.4) to (0,.4);
   \node at (0.2,0) {$\catlabel{\lambda}$};
\end{tikzpicture},
&
\begin{tikzpicture}[anchorbase]
  \draw[-,thin] (0.3,0) to (0.3,.4);
	\draw[-,thin] (0.3,0) to[out=-90, in=0] (0.1,-0.4);
	\draw[-,thin] (0.1,-0.4) to[out = 180, in = -90] (-0.1,0);
	\draw[-,thin] (-0.1,0) to[out=90, in=0] (-0.3,0.4);
	\draw[-,thin] (-0.3,0.4) to[out = 180, in =90] (-0.5,0);
  \draw[-to,thin] (-0.5,0) to (-0.5,-.4);
   \node at (0.5,0) {$\catlabel{\lambda}$};
\end{tikzpicture}
=
\begin{tikzpicture}[anchorbase]
  \draw[to-,thin] (0,-0.4) to (0,.4);
   \node at (0.2,0) {$\catlabel{\lambda}$};
\end{tikzpicture}\:.
\end{align}
Moreover, ${\scriptstyle\color{catcolor}\lambda}\textdowndot$ and ${\scriptstyle\color{catcolor}\lambda}\textdowncrossing$
are equal to the {\em left} mates
of $\textupdot{\scriptstyle\color{catcolor}\lambda}$ and $\textupcrossing{\scriptstyle\color{catcolor}\lambda}$.
It follows that diagrams for 2-morphisms in $\fU(\sl_2)$
are invariant under boundary-preserving planar isotopy.
In particular, $\fU(\sl_2)$ is strictly pivotal with duality functor $\tD$ 
defined by rotating diagrams through $180^\circ$.
There are a couple more useful symmetries, i.e.,
strict 2-functors
\begin{align}\label{TR}
\tR:\fU(\sl_2) &\rightarrow\fU(\sl_2)^{\rev},&
\tT:\fU(\sl_2) &\rightarrow \fU(\sl_2)^{\op}
\end{align}
defined as follows:
\begin{itemize}
\item
$\tR$ takes the object $\lambda$ to $-\lambda$, switches $E 1_\lambda$ with $1_{-\lambda} E$
and $F 1_\lambda$ with $1_{-\lambda} F$, and 
takes string diagram $s$ to $(-1)^{\times(s)} s^{\leftrightarrow}$,
also negating 
weights $\lambda$ labeling regions;
\item $\tT$ takes $\lambda$ to $-\lambda$,
switches the generating 1-morphisms
$E 1_\lambda$ and $F 1_{-\lambda}$,
and takes string diagram $s$ representing a 2-morphism
to $(-1)^{\times(s)} s^\updownarrow$, also negating 
weights $\lambda$ labeling regions.
\end{itemize}
Here, $\times(s)$ is the total number of crossings in $s$.
The duality functor factorizes as  $\tD=\tR\circ \tT=\tT\circ\tR$.

We use similar conventions for 
pins attached to open dots 
as
in \cref{labelledpin} for pins attached to closed dots in the previous section. 
In particular, as in \cref{ping,pong}, we have
\begin{align}\label{pinminus}
\begin{tikzpicture}[anchorbase]
	\draw[-to,thin] (0,-.4) to (0,.4);
    \pinO{(0,0)}{-}{(.8,0)}{u};
   \node at (1.15,0) {$\catlabel{\lambda}$};
\end{tikzpicture}\;
&:=
\begin{tikzpicture}[anchorbase]
	\draw[-to,thin] (0,-.4) to (0,.4);
    \opendot{0,0};
    \pinO{(0,0)}{}{(1.3,0)}{(u-x)^{-1}};
   \node at (2.1,0) {$\catlabel{\lambda}$};
\end{tikzpicture}\;
= 
u^{-1}
\: \begin{tikzpicture}[anchorbase]
	\draw[-to,thin] (0.08,-.4) to (0.08,.4);
   \node at (0.25,0) {$\catlabel{\lambda}$};
\end{tikzpicture}
+
u^{-2}\: \begin{tikzpicture}[anchorbase]
	\draw[-to,thin] (0.08,-.4) to (0.08,.4);
    \opendot{0.08,.05};
   \node at (0.3,0) {$\catlabel{\lambda}$};
\end{tikzpicture}
+
u^{-3}
\: \begin{tikzpicture}[anchorbase]
	\draw[-to,thin] (0.08,-.4) to (0.08,.4);
    \opendot{0.08,.15};
    \opendot{0.08,-.05};
   \node at (0.3,0) {$\catlabel{\lambda}$};
\end{tikzpicture}
+
+\cdots,\\\label{pinplus}
\begin{tikzpicture}[anchorbase]
	\draw[-to,thin] (0,-.4) to (0,.4);
    \pinO{(0,0)}{+}{(.8,0)}{u};
   \node at (1.15,0) {$\catlabel{\lambda}$};
\end{tikzpicture}\;
&:=
\begin{tikzpicture}[anchorbase]
	\draw[-to,thin] (0,-.4) to (0,.4);
    \opendot{0,0};
    \pinO{(0,0)}{}{(1.3,0)}{(u+x)^{-1}};
   \node at (2.1,0) {$\catlabel{\lambda}$};
\end{tikzpicture}\;
= 
u^{-1}\: \begin{tikzpicture}[anchorbase]
	\draw[-to,thin] (0.08,-.4) to (0.08,.4);
   \node at (0.25,0) {$\catlabel{\lambda}$};
\end{tikzpicture}
-
u^{-2}\: \begin{tikzpicture}[anchorbase]
	\draw[-to,thin] (0.08,-.4) to (0.08,.4);
    \opendot{0.08,.05};
   \node at (0.3,0) {$\catlabel{\lambda}$};
\end{tikzpicture}
+
u^{-3}
\: \begin{tikzpicture}[anchorbase]
	\draw[-to,thin] (0.08,-.4) to (0.08,.4);
    \opendot{0.08,.15};
    \opendot{0.08,-.05};
   \node at (0.3,0) {$\catlabel{\lambda}$};
\end{tikzpicture}
-\cdots.
\end{align}
Now, unlike in \cref{rels9,rels9b}, we have simply that
\begin{align}
\label{rels9new}
\begin{tikzpicture}[anchorbase,scale=1.1]
	\draw[to-,thin] (0.4,0) to[out=90, in=0] (0.1,0.5);
	\draw[-,thin] (0.1,0.5) to[out = 180, in = 90] (-0.2,0);
\pinO{(0.4,.2)}{a}{(1.2,.2)}{u};
\end{tikzpicture}
&=
\begin{tikzpicture}[anchorbase,scale=1.1]
	\draw[to-,thin] (0.4,0) to[out=90, in=0] (0.1,0.5);
	\draw[-,thin] (0.1,0.5) to[out = 180, in = 90] (-0.2,0);
\pinO{(-0.2,.2)}{a}{(-1,.2)}{u};
\end{tikzpicture}\:.
&
\begin{tikzpicture}[anchorbase,scale=1.1]
	\draw[to-,thin] (0.4,0) to[out=-90, in=0] (0.1,-0.5);
	\draw[-,thin] (0.1,-0.5) to[out = 180, in = -90] (-0.2,0);
\pinO{(0.4,-.2)}{a}{(1.2,-.2)}{u};
\end{tikzpicture}
&=
\begin{tikzpicture}[anchorbase,scale=1.1]
	\draw[to-,thin] (0.4,0) to[out=-90, in=0] (0.1,-0.5);
	\draw[-,thin] (0.1,-0.5) to[out = 180, in = -90] (-0.2,0);
\pinO{(-0.2,-.2)}{a}{(-1,-.2)}{u};
\end{tikzpicture}\:,
\end{align}
and similarly for the other orientation.

We define the {\em fake bubble polynomials}
\begin{align}\label{KMbubblesalt}
\begin{tikzpicture}[anchorbase]
\filledclockwisebubble{(0,0)};
\node at (0,0) {$\kmlabel{u}$};
\node at (.4,0) {$\catlabel{\lambda}$};
\end{tikzpicture}
&:=
\sum_{n=0}^{-\lambda}
\begin{tikzpicture}[anchorbase]
\clockwisebubble{(0,0)};
\node at (0,0) {$\kmlabel{n}$};
\node at (.4,0) {$\catlabel{\lambda}$};
\end{tikzpicture}
\: u^{-\lambda-n},&
\begin{tikzpicture}[anchorbase]
\filledanticlockwisebubble{(0,0)};
\node at (0,0) {$\kmlabel{u}$};
\node at (.4,0) {$\catlabel{\lambda}$};
\end{tikzpicture}
&:=
\sum_{n=0}^\lambda
\begin{tikzpicture}[anchorbase]
\anticlockwisebubble{(0,0)};
\node at (0,0) {$\kmlabel{n}$};
\node at (.4,0) {$\catlabel{\lambda}$};
\end{tikzpicture}\:
u^{\lambda-n},
\end{align}
which are polynomials in $\End_{\fU(\sl_2)}(1_\lambda)[u]$ with
$\begin{tikzpicture}[anchorbase]
\filledclockwisebubble{(0,0)};
\node at (0,0) {$\kmlabel{u}$};
\node at (.4,0) {$\catlabel{\lambda}$};
\end{tikzpicture} = \delta_{\lambda,0} 1_{1_\lambda}$
when $\lambda \geq 0$
and
$\begin{tikzpicture}[anchorbase]
\filledanticlockwisebubble{(0,0)};
\node at (0,0) {$\kmlabel{u}$};
\node at (.4,0) {$\catlabel{\lambda}$};
\end{tikzpicture} = \delta_{\lambda,0} 1_{1_\lambda}$
when $\lambda \leq 0$.
It is often convenient to combine the fake bubble polynomials 
with generating functions for genuinely dotted bubbles by letting
\begin{align}\label{southwalespolice1}
{\catlabel{\lambda}\:}
\textclockwisebubble(u) &:= 
\begin{tikzpicture}[anchorbase]
\filledclockwisebubble{(0,0)};
\node at (0,0) {$\kmlabel{u}$};
\node at (.4,0) {$\catlabel{\lambda}$};
\end{tikzpicture}+\begin{tikzpicture}[anchorbase]
\clockwisebubble{(0,0)};
\pinO{(-.2,0)}{-}{(-.9,0)}{u};
\node at (.4,0) {$\catlabel{\lambda}$};
\end{tikzpicture}
\in u^{-\lambda} 1_{1_\lambda}
+ u^{-\lambda-1} \End_{\fU(\sl_2)}(1_\lambda)\llbracket u^{-1}\rrbracket
,\\\label{southwalespolice2}
{\catlabel{\lambda}\:}
\textanticlockwisebubble(u) &:= \begin{tikzpicture}[anchorbase]
\filledanticlockwisebubble{(0,0)};
\node at (0,0) {$\kmlabel{u}$};
\node at (.4,0) {$\catlabel{\lambda}$};
\end{tikzpicture}+
\begin{tikzpicture}[anchorbase]
\anticlockwisebubble{(0,0)};
\pinO{(.2,0)}{-}{(.9,0)}{u};
\node at (1.25,0) {$\catlabel{\lambda}$};
\end{tikzpicture}\in u^{\lambda} 1_{1_\lambda}
+
u^{\lambda-1} \End_{\fU(\sl_2)}(1_\lambda)\llbracket u^{-1}\rrbracket.
\end{align}
As explained originally in \cite[Prop.~8.2]{Lauda2} (see also \cite[(3.11)--(3.12)]{BD}),
for any $\lambda \in \Z$,
the algebra $\End_{\fU(\sl_2)}(1_\lambda)$ may be identified
with the algebra 
$\Lambda$ of symmetric functions
so that 
${\catlabel{\lambda}\:}
\textclockwisebubble(u)$ and
${\catlabel{\lambda}\:}
\textanticlockwisebubble(u)$ as just defined are identified with the
generating functions $u^{-\lambda} e(-u)$ and $u^\lambda h(u)$ from \cref{genfuncs}.
In particular, we have that
\begin{equation}\label{infgrass}
{\catlabel{\lambda}\:}
\textclockwisebubble(u)\: \textanticlockwisebubble(u) = 1_{1_\lambda}
\end{equation}
as in \cref{grassmannian}.

The following relations are proved in \cite[Cor.~3.5]{Brundan}:
\begin{align}\label{curlrel1}
\begin{tikzpicture}[anchorbase,scale=1]
	\draw[to-,thin] (0,0.6) to (0,0.3);
	\draw[-,thin] (0,0.3) to [out=-90,in=180] (.3,-0.2);
	\draw[-,thin] (0.3,-0.2) to [out=0,in=-90](.5,0);
	\draw[-,thin] (0.5,0) to [out=90,in=0](.3,0.2);
	\draw[-,thin] (0.3,.2) to [out=180,in=90](0,-0.3);
	\draw[-,thin] (0,-0.3) to (0,-0.6);
   \node at (0.75,0) {$\catlabel{\lambda}$};
\end{tikzpicture}
&=
-\sum_{n=0}^{-\lambda} \begin{tikzpicture}[anchorbase,scale=1]
	\draw[to-,thin] (0,0.6) to (0,-0.6);
	\opendot{0,0};\node at (-.6,0) {$\kmlabel{-\lambda-n}$};
\clockwisebubble{(.5,0)};\node at (.5,0) {$\kmlabel{n}$};
\node at (0.95,0) {$\catlabel{\lambda}$};
\end{tikzpicture}=-\left[ \:\begin{tikzpicture}[anchorbase,scale=1]
	\draw[to-,thin] (0,0.6) to (0,-0.6);
	\pinO{(0,0)}{-}{(.8,0)}{u};
	\filledclockwisebubble{(1.4,0)};\node at (1.4,0) {$\kmlabel{u}$};
\node at (1.8,0) {$\catlabel{\lambda}$};
\end{tikzpicture}\right]_{u^{-1}},\\\label{curlrel2}
\begin{tikzpicture}[anchorbase,scale=1]
	\draw[to-,thin] (0,0.6) to (0,0.3);
	\draw[-,thin] (0,0.3) to [out=-90,in=0] (-.3,-0.2);
	\draw[-,thin] (-0.3,-0.2) to [out=180,in=-90](-.5,0);
	\draw[-,thin] (-0.5,0) to [out=90,in=180](-.3,0.2);
	\draw[-,thin] (-0.3,.2) to [out=0,in=90](0,-0.3);
	\draw[-,thin] (0,-0.3) to (0,-0.6);
     \node at (-0.8,0) {$\catlabel{\lambda}$};
\end{tikzpicture}
&=
\sum_{n=0}^{\lambda} \begin{tikzpicture}[anchorbase,scale=1]
	\draw[to-,thin] (0,0.6) to (0,-0.6);
	\opendot{0,0};\node at (.4,0) {$\kmlabel{\lambda-n}$};
\anticlockwisebubble{(-.5,0)};\node at (-.5,0) {$\kmlabel{n}$};
\node at (-.9,0) {$\catlabel{\lambda}$};
\end{tikzpicture}=
\left[ \begin{tikzpicture}[anchorbase,scale=1]
	\draw[to-,thin] (0,0.6) to (0,-0.6);
	\pinO{(0,0)}{-}{(-.8,0)}{u};
	\filledanticlockwisebubble{(-1.4,0)};\node at (-1.4,0) {$\kmlabel{u}$};
\node at (-1.8,0) {$\catlabel{\lambda}$};
\end{tikzpicture}\:\right]_{u^{-1}}\:,\end{align}
with the second equalities being 
easily checked by equating coefficients; here and below we use $[f(u)]_{u^r}$ to denote the coefficient of $u^r$ of a formal Laurent series $f(u)$ in $u^{-1}$. Other defining relations can be written similarly in terms of generating functions. For example, the following are equivalent to \cref{KMrels3}:
\begin{align}\label{KMrels3equiv}
\begin{tikzpicture}[anchorbase,scale=.95]
	\draw[-to,thin] (0.28,0) to[out=90,in=-90] (-0.28,.7);
	\draw[-,thin] (-0.28,0) to[out=90,in=-90] (0.28,.7);
	\draw[to-,thin] (0.28,-.7) to[out=90,in=-90] (-0.28,0);
	\draw[-,thin] (-0.28,-.7) to[out=90,in=-90] (0.28,0);
  \node at (.5,0) {$\catlabel{\lambda}$};
\end{tikzpicture}
&=
-
\begin{tikzpicture}[anchorbase,scale=.95]
	\draw[to-,thin] (0.2,-.7) to (0.2,.7);
	\draw[-to,thin] (-0.3,-.7) to (-0.3,.7);
   \node at (.35,.05) {$\catlabel{\lambda}$};
\end{tikzpicture}
+
\left[\:
\begin{tikzpicture}[anchorbase,scale=.95]
	\draw[-,thin] (0.3,0.7) to[out=-90, in=0] (0,0.1);
	\draw[-to,thin] (0,0.1) to[out = 180, in = -90] (-0.3,0.7);
    \node at (0.4,0) {$\catlabel{\lambda}$};
    \filledanticlockwisebubble{(-.7,0)};
      \node at (-.7,0) {$\kmlabel{u}$};
     \pinO{(-0.28,0.45)}{-}{(-1,.45)}{u};
     	\draw[to-,thin] (0.3,-.7) to[out=90, in=0] (0,-0.1);
	\draw[-,thin] (0,-0.1) to[out = 180, in = 90] (-0.3,-.7);
     \pinO{(-0.28,-0.5)}{-}{(-1,-.5)}{u};
\end{tikzpicture}
\right]_{u^{-1}},&
\begin{tikzpicture}[anchorbase,scale=.95]
	\draw[-,thin] (0.28,0) to[out=90,in=-90] (-0.28,.7);
	\draw[-to,thin] (-0.28,0) to[out=90,in=-90] (0.28,.7);
	\draw[-,thin] (0.28,-.7) to[out=90,in=-90] (-0.28,0);
	\draw[to-,thin] (-0.28,-.7) to[out=90,in=-90] (0.28,0);
  \node at (.5,0) {$\catlabel{\lambda}$};
\end{tikzpicture}
&=
-
\begin{tikzpicture}[anchorbase,scale=.95]
	\draw[-to,thin] (0.2,-.7) to (0.2,.7);
	\draw[to-,thin] (-0.3,-.7) to (-0.3,.7);
   \node at (.35,0) {$\catlabel{\lambda}$};
\end{tikzpicture}
+\left[
\begin{tikzpicture}[anchorbase,scale=.95]
	\draw[-,thin] (0.3,-0.7) to[out=90, in=0] (0,-0.1);
	\draw[-to,thin] (0,-0.1) to[out = 180, in = 90] (-0.3,-.7);
   \pinO{(0.29,-0.5)}{-}{(1,-.5)}{u};
      \filledclockwisebubble{(.53,0)};
      \node at (.53,0) {$\kmlabel{u}$};
   \node at (-0.4,0) {$\catlabel{\lambda}$};
	\draw[to-,thin] (0.3,.7) to[out=-90, in=0] (0,0.1);
	\draw[-,thin] (0,0.1) to[out = -180, in = -90] (-0.3,.7);
   \pinO{(0.29,0.45)}{-}{(1,.45)}{u};
\end{tikzpicture}\:\right]_{u^{-1}}.
\end{align}
Next, we have the {\em bubble slide relations}
\begin{align}\label{bubbleslide1}
\begin{tikzpicture}[anchorbase,scale=1]
	\node at (-.6,0) {$\textclockwisebubble(u)$};
	\draw[to-,thin] (0,0.6) to (0,-0.6);
\node at (.3,0) {$\catlabel{\lambda}$};
\end{tikzpicture}
&=
\begin{tikzpicture}[anchorbase,scale=1]
	\node at (1.4,0) {$\textclockwisebubble(u)$};
	\draw[to-,thin] (0,0.6) to (0,-0.6);
	\pinO{(0,.2)}{-}{(.6,.2)}{u};
	\pinO{(0,-.2)}{-}{(.6,-.2)}{u};
\node at (2,0) {$\catlabel{\lambda}$};
\end{tikzpicture}
\:,&
\begin{tikzpicture}[anchorbase,scale=1]
	\draw[to-,thin] (0,0.6) to (0,-0.6);
	\node at (.7,0) {$\textanticlockwisebubble(u)$};
\node at (1.4,0) {$\catlabel{\lambda}$};
\end{tikzpicture}
&=
\begin{tikzpicture}[anchorbase,scale=1]
\node at (-1.4,0) {$\textanticlockwisebubble(u)$};
\draw[to-,thin] (0,0.6) to (0,-0.6);
	\pinO{(0,.2)}{-}{(-.6,.2)}{u};
	\pinO{(0,-.2)}{-}{(-.6,-.2)}{u};
\node at (.3,0) {$\catlabel{\lambda}$};
\end{tikzpicture},
\end{align}
which follow from \cite[Prop.~3.4]{KL3}. 
They imply similar
relations for the fake bubble polynomials:
\begin{align}
\label{bubbleslide2}
\begin{tikzpicture}[anchorbase,scale=1]
	\draw[to-,thin] (0,0.6) to (0,-0.6);
\filledclockwisebubble{(-.5,0)};
\node at (-.5,0) {$\kmlabel{u}$};
\node at (.3,0) {$\catlabel{\lambda}$};
\end{tikzpicture}
&=
\left[\begin{tikzpicture}[anchorbase,scale=1]
	\draw[to-,thin] (0,0.6) to (0,-0.6);
	\pinO{(0,.2)}{-}{(.6,.2)}{u};
	\pinO{(0,-.2)}{-}{(.6,-.2)}{u};
\filledclockwisebubble{(1.4,0)};
\node at (1.4,0) {$\kmlabel{u}$};
\node at (2,0) {$\catlabel{\lambda}$};
\end{tikzpicture}\right]_{u^{\geq 0}},&
\begin{tikzpicture}[anchorbase,scale=1]
	\draw[to-,thin] (0,0.6) to (0,-0.6);
\filledanticlockwisebubble{(.5,0)};
\node at (.5,0) {$\kmlabel{u}$};
\node at (1,0) {$\catlabel{\lambda}$};
\end{tikzpicture}
&=
\left[\begin{tikzpicture}[anchorbase,scale=1]
	\draw[to-,thin] (0,0.6) to (0,-0.6);
	\pinO{(0,.2)}{-}{(-.6,.2)}{u};
	\pinO{(0,-.2)}{-}{(-.6,-.2)}{u};
\filledanticlockwisebubble{(-1.4,0)};
\node at (-1.4,0) {$\kmlabel{u}$};
\node at (.3,0) {$\catlabel{\lambda}$};
\end{tikzpicture}\right]_{u^{\geq 0}}.
\end{align}
Finally, we have the
{\em alternating braid relation}
\begin{align}\label{altbraid}
\begin{tikzpicture}[anchorbase,scale=1.05]
	\draw[to-,thin] (0.45,.8) to (-0.45,-.4);
	\draw[-to,thin] (0.45,-.4) to (-0.45,.8);
        \draw[to-,thin] (0,-.4) to[out=90,in=-90] (-.45,0.2);
        \draw[-,thin] (-0.45,0.2) to[out=90,in=-90] (0,0.8);
   \node at (.5,.2) {$\catlabel{\lambda}$};
\end{tikzpicture}
-
\begin{tikzpicture}[anchorbase,scale=1.05]
	\draw[to-,thin] (0.45,.8) to (-0.45,-.4);
	\draw[-to,thin] (0.45,-.4) to (-0.45,.8);
        \draw[to-,thin] (0,-.4) to[out=90,in=-90] (.45,0.2);
        \draw[-,thin] (0.45,0.2) to[out=90,in=-90] (0,0.8);
   \node at (.7,.2) {$\catlabel{\lambda}$};
\end{tikzpicture}&=
\left[\:
\begin{tikzpicture}[anchorbase,scale=.95]
	\draw[-,thin] (0.3,0.7) to[out=-90, in=0] (0,0.1);
	\draw[-to,thin] (0,0.1) to[out = 180, in = -90] (-0.3,0.7);
	\draw[-to,thin] (1.3,-0.7) to (1.3,0.7);
   \pinO{(1.3,0)}{-}{(.6,0)}{u};
    \node at (1.6,0) {$\catlabel{\lambda}$};
    \filledanticlockwisebubble{(-.7,0)};
      \node at (-.7,0) {$\kmlabel{u}$};
     \pinO{(-0.28,0.45)}{-}{(-1,.45)}{u};
     	\draw[to-,thin] (0.3,-.7) to[out=90, in=0] (0,-0.1);
	\draw[-,thin] (0,-0.1) to[out = 180, in = 90] (-0.3,-.7);
     \pinO{(-0.28,-0.5)}{-}{(-1,-.5)}{u};
\end{tikzpicture}
+
\begin{tikzpicture}[anchorbase,scale=.95]
	\draw[-,thin] (0.3,-0.7) to[out=90, in=0] (0,-0.1);
	\draw[-to,thin] (0,-0.1) to[out = 180, in = 90] (-0.3,-.7);
	\draw[-to,thin] (-1.2,-0.7) to (-1.2,0.7);
   \pinO{(-1.2,0)}{-}{(-.5,0)}{u};
   \pinO{(0.29,-0.5)}{-}{(1,-.5)}{u};
      \filledclockwisebubble{(.53,0)};
      \node at (.53,0) {$\kmlabel{u}$};
   \node at (1.1,0) {$\catlabel{\lambda}$};
	\draw[to-,thin] (0.3,.7) to[out=-90, in=0] (0,0.1);
	\draw[-,thin] (0,0.1) to[out = -180, in = -90] (-0.3,.7);
   \pinO{(0.29,0.45)}{-}{(1,.45)}{u};
\end{tikzpicture}\:\right]_{u^{-1}}
\end{align}
from \cite[Prop.~3.5]{KL3}.

In order to make a connection between $\fU(\sl_2)$ and the nil-Brauer category, we need to localize at the morphisms
\begin{equation}
\begin{tikzpicture}[anchorbase]
\draw[-to,thin] (-.6,-.3) to (-.6,.3);
\draw[-,thick] (0,-.3) to (0,.3);
\node at (0,-.5) {$\kmlabel{X}$};
\draw[-to,thin] (.6,-.3) to (.6,.3);
\limitbandOO{(-.6,0)}{}{}{(.6,0)};
\node at (0,.5) {$\phantom{\kmlabel{X}}$};
\node at (.9,0) {$\catlabel{\lambda}$};
\end{tikzpicture}
:=
\begin{tikzpicture}[anchorbase]
\draw[-to,thin] (-.6,-.3) to (-.6,.3);
\draw[-,thick] (0,-.3) to (0,.3);
\draw[-to,thin] (.6,-.3) to (.6,.3);
\node at (0,-.5) {$\kmlabel{X}$};
\opendot{-.6,0};
\node at (0,.5) {$\phantom{\kmlabel{X}}$};
\node at (.9,0) {$\catlabel{\lambda}$};
\end{tikzpicture}
+
\begin{tikzpicture}[anchorbase]
\draw[-to,thin] (-.6,-.3) to (-.6,.3);
\draw[-,thick] (0,-.3) to (0,.3);
\node at (0,-.45) {$\kmlabel{X}$};
\draw[-to,thin] (.6,-.3) to (.6,.3);
\opendot{.6,0};
\node at (0,.45) {$\phantom{\kmlabel{X}}$};
\node at (.9,0) {$\catlabel{\lambda}$};
\end{tikzpicture}\label{bands}
\end{equation}
for all $\lambda,\mu \in \Z$ and all 1-morphisms
$X:\lambda+2\rightarrow \mu-2$ in $\fU(\sl_2)$.
This means that we adjoin additional generating 2-morphisms
$\begin{tikzpicture}[anchorbase,scale=.8]
\draw[-to,thin] (-.6,-.3) to (-.6,.3);
\draw[-,thick] (0,-.3) to (0,.3);
\draw[-to,thin] (.6,-.3) to (.6,.3);
\limitteleporterOO{(-.6,0)}{}{}{(.6,0)};
\node at (0,-.45) {$\kmlabel{X}$};
\node at (0,.45) {$\phantom{\kmlabel{X}}$};
\node at (.9,0) {$\catlabel{\lambda}$};
\end{tikzpicture}
:EXE 1_\lambda \Rightarrow EXE 1_\lambda$ subject to the additional relations
\begin{equation}\label{teleporters}
\begin{tikzpicture}[anchorbase]
\draw[-to,thin] (-.6,-.3) to (-.6,.3);
\draw[-,thick] (0,-.3) to (0,.3);
\draw[-to,thin] (.6,-.3) to (.6,.3);
\limitteleporterOO{(-.6,0)}{}{}{(.6,0)};
\node at (0,-.45) {$\kmlabel{X}$};
\node at (0,.45) {$\phantom{\kmlabel{X}}$};
\node at (.9,0) {$\catlabel{\lambda}$};
\end{tikzpicture}
=
\left(
\begin{tikzpicture}[anchorbase]
\draw[-to,thin] (-.6,-.3) to (-.6,.3);
\draw[-,thick] (0,-.3) to (0,.3);
\draw[-to,thin] (.6,-.3) to (.6,.3);
\limitbandOO{(-.6,0)}{}{}{(.6,0)};
\node at (0,.45) {$\phantom{\kmlabel{X}}$};
\node at (0,-.45) {$\kmlabel{X}$};
\node at (.9,0) {$\catlabel{\lambda}$};
\end{tikzpicture}\right)^{-1}.
\end{equation}
By some analogy with \cite[(4.21)]{Foundations}, 
we refer to the 2-morphisms \cref{teleporters} as {\em teleporters}\footnote{They are also closely related to the morphisms called {\em dumbbells} in \cite{K0}.}: the relation
\begin{equation}
\begin{tikzpicture}[anchorbase,scale=1.2]
 	\draw[-to,thin] (.2,-.3) to (.2,.3);
\draw[-,thick] (-.3,-.3) to (-.3,.3);
\node at (-.3,-.45) {$\kmlabel{X}$};
\node at (-.3,.45) {$\phantom{\kmlabel{X}}$};
	\draw[-to,thin] (-.8,-.3) to (-0.8,.3);
	\limitteleporterOO{(-.8,.1)}{}{}{(.2,.1)};
	\opendot{-.8,-.1};
      \node at (.5,0) {$\catlabel{\lambda}$};
\end{tikzpicture}
\:\:+\;\:
\begin{tikzpicture}[anchorbase,scale=1.2]
 	\draw[-to,thin] (.2,-.3) to (.2,.3);
\draw[-,thick] (-.3,-.3) to (-.3,.3);
\node at (-.3,-.45) {$\kmlabel{X}$};
\node at (-.3,.45) {$\phantom{\kmlabel{X}}$};
	\draw[-to,thin] (-.8,-.3) to (-0.8,.3);
	\limitteleporterOO{(-.8,.1)}{}{}{(.2,.1)};
	\opendot{.2,-.1};
      \node at (.5,0) {$\catlabel{\lambda}$};
\end{tikzpicture}
=
\begin{tikzpicture}[anchorbase,scale=1.2]
 	\draw[-to,thin] (.2,-.3) to (.2,.3);
\draw[-,thick] (-.3,-.3) to (-.3,.3);
\node at (-.3,.45) {$\phantom{\kmlabel{X}}$};
\node at (-.3,-.45) {$\kmlabel{X}$};
	\draw[-to,thin] (-.8,-.3) to (-0.8,.3);
      \node at (.45,0) {$\catlabel{\lambda}$};
\end{tikzpicture}\label{dumby0}
\end{equation}
means that dots can ``teleport" across teleporters (hence, the name!).
We denote the strict 2-category 
obtained in this way by
$\fU(\sl_2)_\loc$; it also admits a grading
like in \cref{KMgrading}.

In $\fU(\sl_2)_\loc$, it is easy to see using \cref{KMrels2,otheradjunction} that the 2-morphisms defined similarly to \cref{bands} 
but with one or both of the upward strings changed to downward strings are also invertible; we denote their inverses in the obvious way by modifying the directions of arrows in \cref{teleporters}.
The dotted and solid horizontal lines in all of these diagrams
are present merely to indicate that the open dots at the endpoints have been identified---they are not a part of the string calculus so can be moved freely around larger diagrams as long as the endpoints remain fixed.
It will often be convenient to allow open dots to be connected by dotted or solid lines even when the endpoints are not at the same horizontal level. Such string diagrams
may be interpreted as morphisms by using planar isotopy to redraw the diagrams so that the endpoints are aligned.
For example, we have that
\begin{equation}\label{tusan}

\right]_{u^{-1}}.
\end{align*}
It just remains to gather the terms together to see that we get exactly the three terms on the right-hand side of the formula claimed
in the statement of the lemma.
The first terms from $A$ and $D$ combine according to \cref{funeral} (rotated through $180^\circ$)
to give the second term in the claimed formula.
The second term from $A$ cancels with the first term from $B$.
The third term from $B$ gives the third term in the claimed formula.
The fourth term from $B$ cancels with the second term from $C$.
The first term from $C$ gives the first term in the claimed formula.
This just leaves the second term in $B$, the second term in $D$
and both terms in $E$. 
The second term in $D$ and the first term in $E$
are easily seen to equal 0 using the definitions \cref{KMbubblesalt}.
The second terms in $B$ and $E$ are 0 too, as follows by considering leading terms like in the proof of \cref{barre1,barre2}.
\end{proof}

\begin{lemma}\label{hideaway}
$\left[
.
$$
The lemma is proved. 
\end{proof}


\section{Monoidal functor from 
\texorpdfstring{$\cNB_t$}{}
to a localized version of
\texorpdfstring{$\fU(\sl_2)$}{}}\label{sec4}

We are now in position to construct the strict monoidal functor
$\Omega_t:\cNB_t \rightarrow \Add\left(\cU(\sl_2;t)_\loc\right)$.
The latter category is the additive envelope of
a monoidal category 
obtained by collapsing the 2-categorical structure on $\fU(\sl_2)_\loc$. Its full definition is as follows:

\begin{definition}\label{hearts}
Let $\cU(\sl_2;t)_\loc$ 
be the 
monoidal category with objects that are words in the free monoid $\langle E, F \rangle$ generated by the letters $E$ and $F$.
For any $X, Y \in \langle E, F \rangle$ and $\lambda \in \Z$,
there are corresponding 1-morphisms $X 1_\lambda, Y 1_\lambda$ in $\fU(\sl_2)_\loc$ obtained by horizontally composing the 1-morphisms $E 1_\mu$ and $F 1_\mu$ corresponding to the letters of $X$ and $Y$
for appropriate weights $\mu$.
Then we define 
\begin{equation}\label{ugly}
\Hom_{\cU(\sl_2;t)_\loc}(Y, X):=
\prod_{\lambda \in t+2\Z} 
\Hom_{\fU(\sl_2)_{\loc}}(Y 1_\lambda, X 1_\lambda)
\end{equation}
for $X, Y \in \langle E, F\rangle$.
Defining the {\em weight} $\wt(X)$ of $X \in \langle E,F\rangle$ to be 
$2\times ($the number of letters $E$ minus the number of letters $F$ in the word $X)$,
the morphism space \cref{ugly} is $0$ unless $\wt(X) = \wt(Y)$.
In general, $f\in \Hom_{\cU(\sl_2;t)_\loc}(Y, X)$
is a tuple $f=(f_\lambda)_{\lambda \in 1+2\Z}$
of morphisms $f_\lambda \in \Hom_{\fU_{q^{-1}}(\sl_2)_{\loc}}(Y 1_\lambda, X 1_\lambda)_\loc$.
The composition law making $\cU(\sl_2;t)_\loc$ into a category is induced by vertical composition in $\fU(\sl_2)_\loc$:
we have that $(g \circ f)_\lambda = g_\lambda \circ f_\lambda$
for morphisms 
$f:X \rightarrow Y$ and $g:Y \rightarrow Z$.
The strict monoidal product $-\star-:\cU(\sl_2;t)_\loc\boxtimes\cU(\sl_2;t)_\loc \rightarrow \cU(\sl_2;t)_\loc$ is induced by horizontal composition in $\fU(\sl_2)_\loc$:
it is defined on objects simply by concatenation of words and
on morphisms by setting 
$(f' \star f)_\lambda := f'_{\lambda+\wt(Y)}\, f_\lambda$
for $f:Y \rightarrow X$, $f':Y' \rightarrow X'$.
\end{definition}

Morphisms in the additive envelope
$\Add\left(\cU(\sl_2;t)_\loc
\right)$ are matrices of morphisms in
$\cU(\sl_2;t)_\loc$.
In the statement of the next theorem, we use some obvious shorthand
to represent such matrices.
For example, the morphism
$\Omega_t\left(\textcrossing\right)$ appearing below
is an endomorphism of 
$$
(E \oplus F)^{\star 2}
\cong E \star E\oplus E \star F \oplus F \star E \oplus F \star F,$$
so it is a $4 \times 4$ matrix with rows and columns indexed
by the words $E E, EF, F E$ and $FF$.
In turn, $\Omega_t\left(\textcrossing\right)_\lambda$
is a matrix 
representing an endomorphism of
 $E E 1_\lambda \oplus \hat EF 1_\lambda\oplus F E 1_\lambda \oplus 
 FF 1_\lambda$.
 The eight morphisms appearing on the right-hand side of the
 equation for $\Omega_t(\textcrossing)_\lambda$ in the statement of the theorem  are matrices of this form with 0 in all but the self-evident entry.

\begin{theorem}\label{psit}
There is a strict monoidal functor
$\Omega_t:\cNB_t \rightarrow \Add\left(\cU(\sl_2;t)_\loc\right)$
taking the generating object $B$ to $E \oplus F$, and defined 
on generating morphisms by letting
\begin{align*}
\Omega_t\left(\textdot\:\right)_\lambda &:=
\begin{tikzpicture}[anchorbase]
\draw[-to,thin] (0,-.4) to (0,.4);
\opendot{0,0};
\node at (0.25,0) {$\catlabel{\lambda}$};
\end{tikzpicture}
-\begin{tikzpicture}[anchorbase]
\draw[to-,thin] (0,-.4) to (0,.4);
\opendot{0,0};
\node at (0.25,0) {$\catlabel{\lambda}$};
\end{tikzpicture}
,\\
\Omega_t\left(\;\textcrossing\;\right)_\lambda &:=
\begin{tikzpicture}[anchorbase,scale=.8]
	\draw[-to,thin] (0.6,-.6) to (-0.6,.6);
	\draw[-to,thin] (-0.6,-.6) to (0.6,.6);
       \node at (0.5,0) {$\catlabel{\lambda}$};
\end{tikzpicture}
+
\begin{tikzpicture}[anchorbase,scale=.8]
	\draw[to-,thin] (0.6,-.6) to (-0.6,.6);
	\draw[to-,thin] (-0.6,-.6) to (0.6,.6);
       \node at (0.5,0) {$\catlabel{\lambda}$};
\end{tikzpicture}
+\begin{tikzpicture}[anchorbase,scale=.8]
	\draw[to-,thin] (0.6,-.6) to (-0.6,.6);
	\draw[-to,thin] (-0.6,-.6) to (0.6,.6);
       \node at (0.5,0) {$\catlabel{\lambda}$};
\end{tikzpicture}
+
\begin{tikzpicture}[anchorbase,scale=.8]
	\draw[-to,thin] (0.6,-.6) to (-0.6,.6);
	\draw[to-,thin] (-0.6,-.6) to (0.6,.6);
\draw[-to,thin,fill=black!10!white] (-.4,.1) arc(-135:225:0.2);
\draw[-to,thin,fill=black!10!white] (.4,-.1) arc(45:-315:0.2);
       \node at (0.7,0) {$\catlabel{\lambda}$};
\end{tikzpicture}
+\begin{tikzpicture}[anchorbase,scale=.8]
	\draw[-to,thin] (-0.7,-.6) to (-0.7,.6);
	\draw[to-,thin] (.1,-.6) to (.1,.6);
\limitteleporterOO{(-.7,0)}{}{}{(.1,0)};
            \node at (.4,0) {$\catlabel{\lambda}$};
\end{tikzpicture}
-\begin{tikzpicture}[anchorbase,scale=.8]
 	\draw[-to,thin] (-0.45,-.6) to[out=90,in=180] (-.1,-.1) to[out=0,in=90] (0.25,-.6);
 	\draw[to-,thin] (-0.45,.6) to[out=-90,in=180] (-.1,.1) to[out=0,in=-90] (0.25,.6);
     \bentlimitteleporterOO{(.17,.3)}{}{}{(.17,-.3)}{(.77,.1) and (.77,-.1)};
\draw[-to,thin,fill=black!10!white] (-.5,.13) arc(-150:210:0.2);
   \node at (.9,0) {$\catlabel{\lambda}$};
\end{tikzpicture}
-
\begin{tikzpicture}[anchorbase,scale=.8]
	\draw[to-,thin] (-0.7,-.6) to (-0.7,.6);
	\draw[-to,thin] (.1,-.6) to (.1,.6);
\limitteleporterOO{(-.7,0)}{}{}{(.1,0)};
            \node at (.4,0) {$\catlabel{\lambda}$};
\end{tikzpicture}
+\begin{tikzpicture}[anchorbase,scale=.8]
 	\draw[to-,thin] (-0.45,-.6) to[out=90,in=180] (-.1,-.1) to[out=0,in=90] (0.25,-.6);
 	\draw[-to,thin] (-0.45,.6) to[out=-90,in=180] (-.1,.1) to[out=0,in=-90] (0.25,.6);
     \bentlimitteleporterOO{(-.42,.3)}{}{}{(-.42,-.3)}{(-1.02,.1) and (-1.02,-.1)};
\draw[-to,thin,fill=black!10!white] (.32,-0.18) arc(30:-330:0.2);
   \node at (.7,0) {$\catlabel{\lambda}$};
\end{tikzpicture}\:,\\
\Omega_t\left(\:\,\txtcap\:\,\right)_\lambda &:=
\begin{tikzpicture}[anchorbase,scale=.8]
	\draw[-,thin] (-0.4,-0.3) to[out=90, in=180] (0,0.3);
	\draw[-to,thin] (-0,0.3) to[out = 0, in = 90] (0.4,-0.3);
\node at (.7,0) {$\catlabel{\lambda}$};
\end{tikzpicture}+
\begin{tikzpicture}[anchorbase,scale=.8]
	\draw[to-,thin] (-0.4,-0.3) to[out=90, in=180] (0,0.3);
	\draw[-,thin] (-0,0.3) to[out = 0, in = 90] (0.4,-0.3);
\draw[-to,thin,fill=black!10!white] (.4,0.2) arc(30:-330:0.2);
\node at (.75,0) {$\catlabel{\lambda}$};
\end{tikzpicture}
,\\
\Omega_t\left(\:\,\txtcup\,\:\right)_\lambda &:=\begin{tikzpicture}[anchorbase,scale=.8] 
	\draw[-,thin] (-0.4,0.3) to[out=-90, in=180] (0,-0.3);
	\draw[-to,thin] (-0,-0.3) to[out = 0, in = -90] (0.4,0.3);
      \node at (.65,0) {$\catlabel{\lambda}$};
\end{tikzpicture}+
\begin{tikzpicture}[anchorbase,scale=.8]
	\draw[to-,thin] (-0.4,0.3) to[out=-90, in=180] (0,-0.3);
	\draw[-,thin] (-0,-0.3) to[out = 0, in = -90] (0.4,0.3);
\draw[-to,thin,fill=black!10!white] (-.4,-.2) arc(-150:210:0.2);
\node at (.65,0) {$\catlabel{\lambda}$};
\end{tikzpicture}
\end{align*}
for $\lambda \in t+2\Z$.
We also have that
\begin{equation}\label{magic}
\Omega_t\left(\O(u)\right)_\lambda
=(-1)^t {\catlabel{\lambda}\:}
\textanticlockwisebubble(u) \:\textclockwisebubble(-u).
\end{equation}
\end{theorem}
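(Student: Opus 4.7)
The plan is to verify that the proposed assignment respects every defining relation of $\cNB_t$ (Definition~\ref{NBdef}); once well-definedness is established, the generating-function formula \cref{magic} follows by direct computation. Since $\Add(\cU(\sl_2;t)_\loc)$ has morphism spaces that decompose as products indexed by weight, it suffices to check each relation component-by-component for $\lambda \in t+2\Z$. At the outset I would note that the object $B = E \oplus F$ gives $B^{\star 2} = EE \oplus EF \oplus FE \oplus FF$, so each relation splits into weight blocks: the blocks $EE$ and $FF$ are fixed by the crossing, while the crossing mixes $EF$ and $FE$, and it is in this mixed block where the $8$-term expansion of $\Omega_t(\textcrossing)$ becomes a $6$-term sum involving the rightward/leftward cups, caps, crossings, teleporters, and the internal-bubble decorated cap/cup pairs.

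I would first dispose of the easier relations. The two zigzag identities in \cref{rels2} split, weight by weight, into the four oriented zigzag identities in $\fU(\sl_2)_\loc$ (via \cref{KMrels2} and \cref{otheradjunction}), combined with the mutual inverseness of internal bubbles from \cref{stpauls}. The closed-bubble identity $\:\begin{tikzpicture}[baseline=-2.5mm]\draw (0,-.15) circle (.3);\end{tikzpicture}= t\,1_{\one}$ reduces, after expanding $\txtcap \circ \txtcup$ in each weight and cancelling terms involving a single internal bubble, to precisely the identity of \cref{sunriver}, which gives the correct value $\tfrac{1-(-1)^\lambda}{2} = t$. The curl relation \cref{rels3} and its rotated cousins \cref{rels5}--\cref{rels6} follow from \cref{curlrel1}, \cref{curlrel2}, and \cref{leavinghome}. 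The final dot-adjunction relations of \cref{rels4} and \cref{rels7} follow from the rotated versions of \cref{dumby3} together with the invertibility of the open dot established in \cref{tusan}.

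The main obstacle is the verification of the nil-Hecke relations \cref{rels1} together with the dot/crossing relation at the start of \cref{rels4}. After expanding $\Omega_t(\textcrossing)^2$ the diagonal weight blocks (on $EE$ and $FF$) reduce to the $\fU(\sl_2)_\loc$ nil-Hecke relations from \cref{KMrels1}, while the cross weight-zero block produces a very large sum of diagrams. The strategy is to organise this sum by the orientations of the four external endpoints and to apply the preparatory lemmas of Section~\ref{sec3} systematically: commutativity of internal bubbles with crossings from \cref{ernie}; their interactions with rightward caps and cups from \cref{florence}, \cref{florencecor}, \cref{werehere}, and \cref{useless}; the teleporter manipulations \cref{dumby1}--\cref{dumby5}; the "bubble-past-crossing" identities \cref{lazy} and \cref{lazier}; and the two matching-termwise identities \cref{fishing}, \cref{hideaway}, and \cref{lastgasp}, which are each exactly shaped to handle one residual weight-block subsum. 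The braid relation (the second identity of \cref{rels1}) is a similar but larger bookkeeping exercise using the same tool set after rewriting the leftmost/rightmost crossings using the same $8$-term expansion.

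Finally, for \cref{magic}: the nil-Brauer bubble maps under $\Omega_t$ to $\txtcap \circ \txtcup$, whose expansion by weight yields four closed loops in $\fU(\sl_2)_\loc$, and the dotted bubble gives an analogous four-term sum with a dot inserted. The two "decorated" loops (the ones carrying an internal bubble on a cap or a cup) are evaluated by \cref{funeral} and \cref{funeral2}, while the two undecorated loops reduce to genuine bubbles. Summing these contributions and inserting into the definition \cref{deltadef} of $\O(u)$, the resulting series rearranges, using the $(-1)^t$ prefactor and the identifications of dotted/fake bubble polynomials in \cref{southwalespolice1}, \cref{southwalespolice2}, to give exactly ${\catlabel{\lambda}\:}\textanticlockwisebubble(-u)\,\textclockwisebubble(u)$. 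As a consistency check at this stage, the identity \cref{rels12} on the nil-Brauer side matches the infinite-Grassmannian relation \cref{infgrass} on the $\fU(\sl_2)_\loc$ side, confirming that $\Omega_t$ is compatible with the bubble generating functions.
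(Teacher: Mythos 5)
Your plan is essentially the paper's own proof: one checks the eight defining relations of $\cNB_t$ weight-block by weight-block, using the preparatory lemmas of \cref{sec3} (the zigzags via \cref{KMrels2}, \cref{otheradjunction}, \cref{stpauls}; the circle via \cref{sunriver}; the nil-Hecke and braid relations via \cref{fishing}, \cref{useless}, \cref{hideaway}, \cref{lastgasp}, etc.), and then verifies \cref{magic} by expanding the internal bubbles. Two caveats: the paper proves \cref{magic} by a direct expansion via \cref{internal2}--\cref{internal1} rather than through \cref{funeral}/\cref{funeral2} (which concern teleporter configurations, not closed decorated loops), and the $(EFE,EFE)$-entry of the braid relation is far more than "bookkeeping" --- it requires recognizing the right-hand side as $-\tR$ of the left-hand side with $\lambda$ replaced by $-\lambda-2$ and reducing to $F_\lambda+\tR(F_{-\lambda-2})=0$ via \cref{lastgasp}, a step your outline does not supply.
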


\begin{proof}
To prove the existence of $\Omega_t$,
we simply need to check the eight defining
relations from \cref{rels1,rels2,rels3,rels4}!
\begin{itemize}
\item
Consider the first relation from \cref{rels2}. We have that
$$
\Omega_t\left(\:\begin{tikzpicture}[baseline=-2.5mm]
\draw (0,-.15) circle (.3);
\end{tikzpicture}
\:\right)_\lambda = 
\begin{tikzpicture}[anchorbase,scale=.8]
\draw[-to,thin] (.3,-.4) arc(0:-360:.4);
\draw[-to,thin,fill=black!10!white] (-.65,-.4) arc(180:540:0.2);
\node at (.6,-.3) {$\catlabel{\lambda}$};
\end{tikzpicture}
+
\begin{tikzpicture}[anchorbase,scale=.8]
\node at (1.55,-.3) {$\catlabel{\lambda}$};
\draw[-to,thin] (.3,-.4) arc(-180:180:.4);
\draw[-to,thin,fill=black!10!white] (1.25,-.4) arc(0:-360:0.2);
\end{tikzpicture}\:.
$$
To check the relation, we must show that this equals
$\Omega_t(t 1_\one)_\lambda = t 1_{1_\lambda}$,
which follows immediately from \cref{sunriver}.
\item
For the second relation from \cref{rels2}, we 
have that
\begin{align*}
\Omega_t\left(\:\begin{tikzpicture}[anchorbase]
  \draw[-] (0.3,0) to (0.3,.4);
	\draw[-] (0.3,0) to[out=-90, in=0] (0.1,-0.4);
	\draw[-] (0.1,-0.4) to[out = 180, in = -90] (-0.1,0);
	\draw[-] (-0.1,0) to[out=90, in=0] (-0.3,0.4);
	\draw[-] (-0.3,0.4) to[out = 180, in =90] (-0.5,0);
  \draw[-] (-0.5,0) to (-0.5,-.4);
\end{tikzpicture}
\:\right)_\lambda&=\begin{tikzpicture}[anchorbase]
  \draw[-to,thin] (0.3,0) to (0.3,.4);
	\draw[-,thin] (0.3,0) to[out=-90, in=0] (0.1,-0.4);
	\draw[-,thin] (0.1,-0.4) to[out = 180, in = -90] (-0.1,0);
	\draw[-,thin] (-0.1,0) to[out=90, in=0] (-0.3,0.4);
	\draw[-,thin] (-0.3,0.4) to[out = 180, in =90] (-0.5,0);
  \draw[-,thin] (-0.5,0) to (-0.5,-.4);
\node at (.5,0) {$\catlabel{\lambda}$};
\end{tikzpicture}
+\begin{tikzpicture}[anchorbase]
  \draw[-,thin] (0.3,0) to (0.3,.4);
	\draw[-,thin] (0.3,0) to[out=-90, in=0] (0.1,-0.4);
	\draw[-,thin] (0.1,-0.4) to[out = 180, in = -90] (-0.1,0);
	\draw[-,thin] (-0.1,0) to[out=90, in=0] (-0.3,0.4);
	\draw[-,thin] (-0.3,0.4) to[out = 180, in =90] (-0.5,0);
  \draw[-to,thin] (-0.5,0) to (-0.5,-.4);
\draw[-to,thin,fill=black!10!white] (0,.2) arc(0:-360:0.15);
\draw[-to,thin,fill=black!10!white] (-.2,-.2) arc(-180:180:0.15);
\node at (.75,0) {$\catlabel{\lambda}$};
\end{tikzpicture}
\:,\\
\Omega_t\left(\:\:
\begin{tikzpicture}[anchorbase]
  \draw[-] (0,-0.4) to (0,.4);
\end{tikzpicture}\:\:\right)_\lambda
&=\:\begin{tikzpicture}[anchorbase]
  \draw[-to,thin] (0,-0.4) to (0,.4);
\node at (.2,0) {$\catlabel{\lambda}$};
\end{tikzpicture}
+\begin{tikzpicture}[anchorbase]
  \draw[to-,thin] (0,-0.4) to (0,.4);
\node at (.2,0) {$\catlabel{\lambda}$};
\end{tikzpicture}
\:,\\
\Omega_t\left(\:\begin{tikzpicture}[anchorbase]
  \draw[-] (0.3,0) to (0.3,-.4);
	\draw[-] (0.3,0) to[out=90, in=0] (0.1,0.4);
	\draw[-] (0.1,0.4) to[out = 180, in = 90] (-0.1,0);
	\draw[-] (-0.1,0) to[out=-90, in=0] (-0.3,-0.4);
	\draw[-] (-0.3,-0.4) to[out = 180, in =-90] (-0.5,0);
  \draw[-] (-0.5,0) to (-0.5,.4);
\end{tikzpicture}\:\right)_\lambda &=\begin{tikzpicture}[anchorbase]
  \draw[-,thin] (0.3,0) to (0.3,-.4);
	\draw[-,thin] (0.3,0) to[out=90, in=0] (0.1,0.4);
	\draw[-,thin] (0.1,0.4) to[out = 180, in = 90] (-0.1,0);
	\draw[-,thin] (-0.1,0) to[out=-90, in=0] (-0.3,-0.4);
	\draw[-,thin] (-0.3,-0.4) to[out = 180, in =-90] (-0.5,0);
  \draw[-to,thin] (-0.5,0) to (-0.5,.4);
\draw[-to,thin,fill=black!10!white] (-.65,0) arc(180:540:0.15);
\draw[-to,thin,fill=black!10!white] (.45,0) arc(0:-360:0.15);
\node at (.75,0) {$\catlabel{\lambda}$};
\end{tikzpicture}+\begin{tikzpicture}[anchorbase]
  \draw[-to,thin] (0.3,0) to (0.3,-.4);
	\draw[-,thin] (0.3,0) to[out=90, in=0] (0.1,0.4);
	\draw[-,thin] (0.1,0.4) to[out = 180, in = 90] (-0.1,0);
	\draw[-,thin] (-0.1,0) to[out=-90, in=0] (-0.3,-0.4);
	\draw[-,thin] (-0.3,-0.4) to[out = 180, in =-90] (-0.5,0);
  \draw[-,thin] (-0.5,0) to (-0.5,.4);
\node at (.5,0) {$\catlabel{\lambda}$};
\end{tikzpicture}\:.
\end{align*}
Thus, to check this relation, we need to show that
\begin{align*}
\begin{tikzpicture}[anchorbase]
  \draw[-to,thin] (0.3,0) to (0.3,.4);
	\draw[-,thin] (0.3,0) to[out=-90, in=0] (0.1,-0.4);
	\draw[-,thin] (0.1,-0.4) to[out = 180, in = -90] (-0.1,0);
	\draw[-,thin] (-0.1,0) to[out=90, in=0] (-0.3,0.4);
	\draw[-,thin] (-0.3,0.4) to[out = 180, in =90] (-0.5,0);
  \draw[-,thin] (-0.5,0) to (-0.5,-.4);
\node at (.5,0) {$\catlabel{\lambda}$};
\end{tikzpicture}
&=
\:\begin{tikzpicture}[anchorbase]
  \draw[-to,thin] (0,-0.4) to (0,.4);
\node at (.2,0) {$\catlabel{\lambda}$};
\end{tikzpicture}
=\begin{tikzpicture}[anchorbase]
  \draw[-,thin] (0.3,0) to (0.3,-.4);
	\draw[-,thin] (0.3,0) to[out=90, in=0] (0.1,0.4);
	\draw[-,thin] (0.1,0.4) to[out = 180, in = 90] (-0.1,0);
	\draw[-,thin] (-0.1,0) to[out=-90, in=0] (-0.3,-0.4);
	\draw[-,thin] (-0.3,-0.4) to[out = 180, in =-90] (-0.5,0);
  \draw[-to,thin] (-0.5,0) to (-0.5,.4);
\draw[-to,thin,fill=black!10!white] (-.65,0) arc(180:540:0.15);
\draw[-to,thin,fill=black!10!white] (.45,0) arc(0:-360:0.15);
\node at (.75,0) {$\catlabel{\lambda}$};
\end{tikzpicture}\:,&
\begin{tikzpicture}[anchorbase]
  \draw[-,thin] (0.3,0) to (0.3,.4);
	\draw[-,thin] (0.3,0) to[out=-90, in=0] (0.1,-0.4);
	\draw[-,thin] (0.1,-0.4) to[out = 180, in = -90] (-0.1,0);
	\draw[-,thin] (-0.1,0) to[out=90, in=0] (-0.3,0.4);
	\draw[-,thin] (-0.3,0.4) to[out = 180, in =90] (-0.5,0);
  \draw[-to,thin] (-0.5,0) to (-0.5,-.4);
\draw[-to,thin,fill=black!10!white] (0,.2) arc(0:-360:0.15);
\draw[-to,thin,fill=black!10!white] (-.2,-.2) arc(-180:180:0.15);
\node at (.75,0) {$\catlabel{\lambda}$};
\end{tikzpicture}
&=\:
\begin{tikzpicture}[anchorbase]
  \draw[to-,thin] (0,-0.4) to (0,.4);
\node at (.2,0) {$\catlabel{\lambda}$};
\end{tikzpicture}
=\begin{tikzpicture}[anchorbase]
  \draw[-to,thin] (0.3,0) to (0.3,-.4);
	\draw[-,thin] (0.3,0) to[out=90, in=0] (0.1,0.4);
	\draw[-,thin] (0.1,0.4) to[out = 180, in = 90] (-0.1,0);
	\draw[-,thin] (-0.1,0) to[out=-90, in=0] (-0.3,-0.4);
	\draw[-,thin] (-0.3,-0.4) to[out = 180, in =-90] (-0.5,0);
  \draw[-,thin] (-0.5,0) to (-0.5,.4);
\node at (.5,0) {$\catlabel{\lambda}$};
\end{tikzpicture}\:.
\end{align*}
These both follow easily using \cref{KMrels2}, \cref{otheradjunction,stpauls}.
\item
In a similar way, the first relation from \cref{rels4} reduces to checking that
\begin{align*}
\begin{tikzpicture}[anchorbase,scale=1.2]
 	\draw[to-,thin] (0.35,.55) to (-0.35,-.15);
	\draw[-to,thin] (0.35,-.15) to (-0.35,.55);
  \node at (.4,.25) {$\catlabel{\lambda}$};
      \opendot{-0.18,0.38};
      \end{tikzpicture}
-
\begin{tikzpicture}[anchorbase,scale=1.2]
	\draw[to-,thin] (0.35,.55) to (-0.35,-.15);
	\draw[-to,thin] (0.35,-.15) to (-0.35,.55);
  \node at (.4,.25) {$\catlabel{\lambda}$};
      \opendot{0.18,0.01};
      \end{tikzpicture}
&=
\begin{tikzpicture}[anchorbase,scale=1.2]
 	\draw[-to,thin] (0.08,-.3) to (0.08,.4);
	\draw[-to,thin] (-0.28,-.3) to (-0.28,.4);
 \node at (.28,.06) {$\catlabel{\lambda}$};
\end{tikzpicture}\:,&
-\begin{tikzpicture}[anchorbase,scale=1.2]
 	\draw[-to,thin] (0.35,.55) to (-0.35,-.15);
	\draw[to-,thin] (0.35,-.15) to (-0.35,.55);
  \node at (.4,.25) {$\catlabel{\lambda}$};
      \opendot{-0.18,0.38};
      \end{tikzpicture}
+
\begin{tikzpicture}[anchorbase,scale=1.2]
	\draw[-to,thin] (0.35,.55) to (-0.35,-.15);
	\draw[to-,thin] (0.35,-.15) to (-0.35,.55);
  \node at (.4,.25) {$\catlabel{\lambda}$};
      \opendot{0.18,0.01};
      \end{tikzpicture}
&=
\begin{tikzpicture}[anchorbase,scale=1.2]
 	\draw[to-,thin] (0.08,-.3) to (0.08,.4);
	\draw[to-,thin] (-0.28,-.3) to (-0.28,.4);
 \node at (.28,.06) {$\catlabel{\lambda}$};
\end{tikzpicture}\:,\\
-\begin{tikzpicture}[anchorbase,scale=1.4]
 	\draw[to-,thin] (0.35,.55) to (-0.35,-.15);
	\draw[to-,thin] (0.35,-.15) to (-0.35,.55);
  \node at (.4,.25) {$\catlabel{\lambda}$};
      \opendot{-0.18,0.38};
      \end{tikzpicture}
+
\begin{tikzpicture}[anchorbase,scale=1.4]
	\draw[to-,thin] (0.35,.55) to (-0.35,-.15);
	\draw[to-,thin] (0.35,-.15) to (-0.35,.55);
  \node at (.4,.25) {$\catlabel{\lambda}$};
      \opendot{0.18,0.01};
      \end{tikzpicture}
&=-\begin{tikzpicture}[anchorbase,scale=1.4]
 	\draw[-to,thin] (-0.25,-.35) to[out=90,in=180] (0,-.05) to[out=0,in=90] (0.25,-.35);
 	\draw[-to,thin] (-0.25,.35) to[out=-90,in=180] (0,.05) to[out=0,in=-90] (0.25,.35);
   \node at (.5,0) {$\catlabel{\lambda}$};
\end{tikzpicture}\:,&
\begin{tikzpicture}[anchorbase,scale=.8]
	\draw[-to,thin] (0.6,.6) to (-0.6,-.6);
	\draw[-to,thin] (0.6,-.6) to (-0.6,.6);
  \node at (.7,0) {$\catlabel{\lambda}$};
      \opendot{-0.45,.45};
\draw[-to,thin,fill=black!10!white] (-.35,.05) arc(-135:225:0.2);
\draw[-to,thin,fill=black!10!white] (.35,-0.05) arc(45:-315:0.2);
      \end{tikzpicture}
-
\begin{tikzpicture}[anchorbase,scale=.8]
	\draw[-to,thin] (0.6,.6) to (-0.6,-.6);
	\draw[-to,thin] (0.6,-.6) to (-0.6,.6);
  \node at (.7,0) {$\catlabel{\lambda}$};
      \opendot{0.45,-.45};
\draw[-to,thin,fill=black!10!white] (-.35,.05) arc(-135:225:0.2);
\draw[-to,thin,fill=black!10!white] (.35,-0.05) arc(45:-315:0.2);
      \end{tikzpicture}
&=-\begin{tikzpicture}[anchorbase,scale=.8]
 	\draw[to-,thin] (-0.45,-.6) to[out=90,in=180] (-.1,-.1) to[out=0,in=90] (0.25,-.6);
 	\draw[to-,thin] (-0.45,.6) to[out=-90,in=180] (-.1,.1) to[out=0,in=-90] (0.25,.6);
\draw[-to,thin,fill=black!10!white] (-.45,.1) arc(-135:225:0.2);
\draw[-to,thin,fill=black!10!white] (.25,-0.1) arc(45:-315:0.2);
   \node at (.9,0) {$\catlabel{\lambda}$};
\end{tikzpicture}\:,
\end{align*}and that\begin{align*}
\begin{tikzpicture}[anchorbase,scale=1]
	\draw[-to,thin] (-0.7,-.6) to (-0.7,.6);
	\draw[to-,thin] (.1,-.6) to (.1,.6);
\limitteleporterOO{(-.7,0)}{}{}{(.1,0)};
\opendot{-.7,.3};
            \node at (.4,0) {$\catlabel{\lambda}$};
\end{tikzpicture}
-\begin{tikzpicture}[anchorbase,scale=1]
 	\draw[-to,thin] (-0.45,-.6) to[out=90,in=180] (-.1,-.1) to[out=0,in=90] (0.25,-.6);
 	\draw[to-,thin] (-0.45,.6) to[out=-90,in=180] (-.1,.1) to[out=0,in=-90] (0.25,.6);
 	     \bentlimitteleporterOO{(.19,.3)}{}{}{(.19,-.3)}{(.77,.1) and (.77,-.1)};
\draw[-to,thin,fill=black!10!white] (-.38,.08) arc(-135:225:0.15);
\opendot{-.42,.42};
   \node at (.85,0) {$\catlabel{\lambda}$};
\end{tikzpicture}
+\begin{tikzpicture}[anchorbase,scale=1]
	\draw[-to,thin] (-0.7,-.6) to (-0.7,.6);
	\draw[to-,thin] (.1,-.6) to (.1,.6);
\limitteleporterOO{(-.7,0)}{}{}{(.1,0)};
            \node at (.4,0) {$\catlabel{\lambda}$};
\opendot{.1,-.3};
\end{tikzpicture}
-\begin{tikzpicture}[anchorbase,scale=1]
 	\draw[-to,thin] (-0.45,-.6) to[out=90,in=180] (-.1,-.1) to[out=0,in=90] (0.25,-.6);
 	\draw[to-,thin] (-0.45,.6) to[out=-90,in=180] (-.1,.1) to[out=0,in=-90] (0.25,.6);
 	     \bentlimitteleporterOO{(.19,.3)}{}{}{(.13,-.22)}{(.77,.1) and (.77,-.1)};
\draw[-to,thin,fill=black!10!white] (-.38,.08) arc(-135:225:0.15);
\opendot{.24,-.4};
   \node at (.85,0) {$\catlabel{\lambda}$};
\end{tikzpicture}
&=
\begin{tikzpicture}[anchorbase,scale=1]
	\draw[-to,thin] (-0.6,-.6) to (-0.6,.6);
	\draw[to-,thin] (.1,-.6) to (.1,.6);
            \node at (.4,0) {$\catlabel{\lambda}$};
\end{tikzpicture}
-\begin{tikzpicture}[anchorbase,scale=1]
 	\draw[-to,thin] (-0.45,-.6) to[out=90,in=180] (-.1,-.1) to[out=0,in=90] (0.25,-.6);
 	\draw[to-,thin] (-0.45,.6) to[out=-90,in=180] (-.1,.1) to[out=0,in=-90] (0.25,.6);
\draw[-to,thin,fill=black!10!white] (-.38,.08) arc(-135:225:0.15);
   \node at (.85,0) {$\catlabel{\lambda}$};
\end{tikzpicture}
\:,\\
\begin{tikzpicture}[anchorbase,scale=1]
	\draw[to-,thin] (-0.7,-.6) to (-0.7,.6);
	\draw[-to,thin] (.1,-.6) to (.1,.6);
\limitteleporterOO{(-.7,0)}{}{}{(.1,0)};
\opendot{-.7,.3};
            \node at (.4,0) {$\catlabel{\lambda}$};
\end{tikzpicture}
-\begin{tikzpicture}[anchorbase,scale=1]
 	\draw[to-,thin] (-0.45,-.6) to[out=90,in=180] (-.1,-.1) to[out=0,in=90] (0.25,-.6);
 	\draw[-to,thin] (-0.45,.6) to[out=-90,in=180] (-.1,.1) to[out=0,in=-90] (0.25,.6);
     \bentlimitteleporterOO{(-.38,.3)}{}{}{(-.39,-.3)}{(-.8,.1) and (-.8,-.1)};
\opendot{-.44,.47};
\draw[-to,thin,fill=black!10!white] (.25,-0.1) arc(45:-315:0.15);
   \node at (.9,0) {$\catlabel{\lambda}$};
\end{tikzpicture}
+
\begin{tikzpicture}[anchorbase,scale=1]
	\draw[to-,thin] (-0.7,-.6) to (-0.7,.6);
	\draw[-to,thin] (.1,-.6) to (.1,.6);
\limitteleporterOO{(-.7,0)}{}{}{(.1,0)};
\opendot{.1,-.3};
            \node at (.4,0) {$\catlabel{\lambda}$};
\end{tikzpicture}
-\begin{tikzpicture}[anchorbase,scale=1]
 	\draw[to-,thin] (-0.45,-.6) to[out=90,in=180] (-.1,-.1) to[out=0,in=90] (0.25,-.6);
 	\draw[-to,thin] (-0.45,.6) to[out=-90,in=180] (-.1,.1) to[out=0,in=-90] (0.25,.6);
     \bentlimitteleporterOO{(-.39,.3)}{}{}{(-.39,-.3)}{(-.8,.1) and (-.8,-.1)};
\opendot{.24,-.48};
\draw[-to,thin,fill=black!10!white] (.25,-0.1) arc(45:-315:0.15);
   \node at (.9,0) {$\catlabel{\lambda}$};
\end{tikzpicture}
&=
\begin{tikzpicture}[anchorbase,scale=1]
	\draw[to-,thin] (-0.6,-.6) to (-0.6,.6);
	\draw[-to,thin] (.1,-.6) to (.1,.6);
            \node at (.4,0) {$\catlabel{\lambda}$};
\end{tikzpicture}
-\begin{tikzpicture}[anchorbase,scale=1]
 	\draw[to-,thin] (-0.45,-.6) to[out=90,in=180] (-.1,-.1) to[out=0,in=90] (0.25,-.6);
 	\draw[-to,thin] (-0.45,.6) to[out=-90,in=180] (-.1,.1) to[out=0,in=-90] (0.25,.6);
\draw[-to,thin,fill=black!10!white] (.25,-0.1) arc(45:-315:0.15);
   \node at (.9,0) {$\catlabel{\lambda}$};
\end{tikzpicture}\:.
\end{align*}
The first four of these follow immediately from the last defining relation in
\cref{KMrels1} plus its variants obtained by rotating through $90^\circ, 180^\circ$ and $270^\circ$. The last two follow using \cref{dumby0}.
\item
Next, we look at the second relation from \cref{rels4}.
For this, we must show that
\begin{align*}
\begin{tikzpicture}[anchorbase,scale=.9]
	\draw[to-,thin] (0.4,0) to[out=90, in=0] (0,0.6);
	\draw[-,thin] (0,0.6) to[out = 180, in = 90] (-0.4,0);
 \opendot{.37,.3};
\node at (.6,0.3) {$\catlabel{\lambda}$};
\end{tikzpicture}
&=
\begin{tikzpicture}[anchorbase,scale=.9]
	\draw[to-,thin] (0.4,0) to[out=90, in=0] (0,0.6);
	\draw[-,thin] (0,0.6) to[out = 180, in = 90] (-0.4,0);
 \opendot{-.36,.3};
\node at (.6,0.3) {$\catlabel{\lambda}$};
\end{tikzpicture}\:,&
\begin{tikzpicture}[anchorbase,scale=.9]
	\draw[-,thin] (0.4,0) to[out=90, in=0] (0,0.6);
	\draw[-to,thin] (0,0.6) to[out = 180, in = 90] (-0.4,0);
\draw[-to,thin,fill=black!10!white] (.35,.63) arc(60:-300:0.2);
 \opendot{.38,.15};
\node at (.6,0.3) {$\catlabel{\lambda}$};
\end{tikzpicture}
&=
\begin{tikzpicture}[anchorbase,scale=.9]
	\draw[-,thin] (0.4,0) to[out=90, in=0] (0,0.6);
	\draw[-to,thin] (0,0.6) to[out = 180, in = 90] (-0.4,0);
\draw[-to,thin,fill=black!10!white] (.35,.63) arc(60:-300:0.2);
 \opendot{-.36,.3};
\node at (.7,0.3) {$\catlabel{\lambda}$};
\end{tikzpicture}\:.
\end{align*}
These both follow using planar isotopy
since open dots commute with internal bubbles.
\item
The first relation in \cref{rels3} follows using \cref{stpauls} and
the relations
\begin{align*}
\begin{tikzpicture}[anchorbase]
	\draw[to-,thin] (-0.35,.3)  to [out=90,in=-90] (.15,.9) to [out=90,in=0] (-0.1,1.2);
 \draw[-,thin] (.15,.3)  to [out=90,in=-90] (-.35,.9) to [out=90,in=180] (-.1,1.2);
\draw[to-,thin,fill=black!10!white] (-.5,1) arc(180:-180:0.16);
\node at (.3,.8) {$\catlabel{\lambda}$};
\end{tikzpicture}
+\begin{tikzpicture}[anchorbase,scale=1.2]
	\draw[-,thin] (0.3,-0.3) to[out=90, in=0] (0,0.4);
	\draw[-to,thin] (0,0.4) to[out = 180, in = 90] (-0.3,-0.3);
\draw[-to,thin] (.87,.26) arc(-225:235:0.2);
\draw[to-,thin,fill=black!10!white] (1.3,.12) arc(0:360:0.13);
\limitteleporterOO{(.81,.12)}{}{}{(.28,.12)};
\node at (1.5,0) {$\catlabel{\lambda}$};
\end{tikzpicture}
-{\textstyle\frac{1}{2}}\:\begin{tikzpicture}[anchorbase,scale=1.2]
	\draw[-,thin] (0.3,-0.3) to[out=90, in=0] (0,0.4);
	\draw[-to,thin] (0,0.4) to[out = 180, in = 90] (-0.3,-0.3);
\opendot{.27,.1};\node at (.5,.1) {$\kmlabel{-1}$};
\node at (.9,0) {$\catlabel{\lambda}$};
\end{tikzpicture}&=0,\\
\begin{tikzpicture}[anchorbase]
	\draw[to-,thin] (0.35,.3)  to [out=90,in=-90] (-.15,.9) to [out=90,in=180] (0.1,1.2);
 \draw[-,thin] (-.15,.3)  to [out=90,in=-90] (.35,.9) to [out=90,in=0] (.1,1.2);
\draw[to-,thin,fill=black!10!white] (.5,1) arc(0:360:0.16);
\node at (.6,.7) {$\catlabel{\lambda}$};
\end{tikzpicture}
-\begin{tikzpicture}[anchorbase,scale=1.2]
	\draw[-,thin] (-0.3,-0.3) to[out=90, in=180] (0,0.4);
	\draw[-to,thin] (0,0.4) to[out = 0, in = 90] (0.3,-0.3);
\draw[to-,thin] (-.87,.26) arc(45:405:0.2);
\draw[to-,thin,fill=black!10!white] (-1.3,.12) arc(180:-180:0.13);
\limitteleporterOO{(-.81,.12)}{}{}{(-.28,.12)};
\node at (.5,0) {$\catlabel{\lambda}$};
\end{tikzpicture}
+{\textstyle\frac{1}{2}}\:\begin{tikzpicture}[anchorbase,scale=1.2]
	\draw[-,thin] (-0.3,-0.3) to[out=90, in=180] (0,0.4);
	\draw[-to,thin] (0,0.4) to[out = 0, in = 90] (0.3,-0.3);
\opendot{-.27,.1};\node at (-.55,.1) {$\kmlabel{-1}$};
\node at (.5,0) {$\catlabel{\lambda}$};
\end{tikzpicture}&=0.
\end{align*}
The first of these is \cref{werehere}, and the second follows
from the first (with $\lambda$ replaced by $-\lambda$)
on applying $\tR$.
\item The second relation from \cref{rels3} requires the
following four identities to be checked:
\begin{align*}
\begin{tikzpicture}[anchorbase,scale=1.5]
	\draw[to-,thin] (0.5,0) to[out=90, in=0] (0.1,0.6);
	\draw[-,thin] (0.1,0.6) to[out = 180, in = 90] (-0.3,0);
 \draw[-to,thin] (0.1,0) to[out=90,in=-90] (-.35,.7);
\node at (.6,.4) {$\catlabel{\lambda}$};
\end{tikzpicture}
&=
\begin{tikzpicture}[anchorbase,scale=1.5]
	\draw[to-,thin] (0.5,0) to[out=90, in=0] (0.1,0.6);
	\draw[-,thin] (0.1,0.6) to[out = 180, in = 90] (-0.3,0);
 \draw[-to,thin] (0.1,0) to[out=90,in=-90] (.55,.7);
\node at (.6,.4) {$\catlabel{\lambda}$};
\end{tikzpicture}\:,&
\begin{tikzpicture}[anchorbase,scale=1.5]
	\draw[to-,thin] (0.5,0) to[out=90, in=0] (0.1,0.5);
	\draw[-,thin] (0.1,0.5) to[out = 180, in = 90] (-0.3,0);
 \draw[to-,thin] (0.1,0) to[out=90,in=-90] (-.3,.7);
\node at (.6,.4) {$\catlabel{\lambda}$};
\end{tikzpicture}
&=
\begin{tikzpicture}[anchorbase,scale=1.5]
	\draw[to-,thin] (0.5,0) to[out=90, in=0] (0.1,0.5);
	\draw[-,thin] (0.1,0.5) to[out = 180, in = 90] (-0.3,0);
 \draw[to-,thin] (0.1,0) to[out=90,in=-90] (.5,.7);
\node at (.6,.4) {$\catlabel{\lambda}$};
\end{tikzpicture}\:,\\
\begin{tikzpicture}[anchorbase,scale=1.5]
	\draw[-,thin] (0.5,0) to[out=90, in=0] (0.1,0.7);
	\draw[-to,thin] (0.1,0.7) to[out = 180, in = 90] (-0.3,0);
 \draw[-to,thin] (0.1,0) to[out=90,in=-90] (-.4,.8);
\draw[-to,thin,fill=black!10!white] (-.43,0.58) arc(-170:190:0.1);
\draw[-to,thin,fill=black!10!white] (0.02,0.35) arc(10:-350:0.1);
\draw[-to,thin,fill=black!10!white] (.55,0.4) arc(0:-360:0.1);
\node at (.8,.4) {$\catlabel{\lambda}$};
\end{tikzpicture}
&=
\begin{tikzpicture}[anchorbase,scale=1.5]
	\draw[-,thin] (0.5,0) to[out=90, in=0] (0.1,0.7);
	\draw[-to,thin] (0.1,0.7) to[out = 180, in = 90] (-0.3,0);
 \draw[-to,thin] (0.1,0) to[out=90,in=-90] (.6,.8);
\draw[-to,thin,fill=black!10!white] (.38,0.66) arc(30:-330:0.1);
\node at (.6,.4) {$\catlabel{\lambda}$};
\end{tikzpicture}\:,&
\begin{tikzpicture}[anchorbase,scale=1.5]
	\draw[-,thin] (0.5,0) to[out=90, in=0] (0.1,0.7);
	\draw[-to,thin] (0.1,0.7) to[out = 180, in = 90] (-0.3,0);
 \draw[to-,thin] (0.1,0) to[out=90,in=-90] (-.4,.8);
\draw[-to,thin,fill=black!10!white] (.55,0.3) arc(0:-360:0.1);
\node at (.6,.6) {$\catlabel{\lambda}$};
\end{tikzpicture}
&=
\begin{tikzpicture}[anchorbase,scale=1.5]
	\draw[-,thin] (0.5,0) to[out=90, in=0] (0.1,0.7);
	\draw[-to,thin] (0.1,0.7) to[out = 180, in = 90] (-0.3,0);
 \draw[to-,thin] (0.1,0) to[out=70,in=-90] (.7,.8);
\draw[-to,thin,fill=black!10!white] (.2,.78) arc(70:-290:0.1);
\draw[-to,thin,fill=black!10!white] (.28,0.5) arc(-170:190:0.1);
\draw[-to,thin,fill=black!10!white] (0.6,0.25) arc(10:-350:0.1);
\node at (.8,.4) {$\catlabel{\lambda}$};
\end{tikzpicture}\:.
\end{align*}
The first two of these follow immediately using planar isotopy.
The last one follows using planar isotopy and \cref{stpauls}.
For the third, one also needs to use \cref{ernie} (rotated counterclockwise by $90^\circ$)
to commute the two clockwise internal bubbles past the crossing to their left.
\item The first relation from \cref{rels1}
involves six non-trivial relations,
coming from the
$(EE 1_\lambda,EE 1_\lambda)$-
$(EF 1_\lambda, FE 1_\lambda)$-,
$(FE 1_\lambda,FE 1_\lambda)$-,
$(FF 1_\lambda,FF 1_\lambda)$-,
$(FE 1_\lambda, EF 1_\lambda)$- and
$(EF 1_\lambda, EF 1_\lambda)$-entries of the corresponding $4 \times 4$ matrices. 
However, after applying \cref{stpauls} to redistribute some internal bubbles, the last three may be deduced from images of the first three under $\tR$ and $\tT$. 
Thus, we really only have to verify three relations, which are as follows:
\begin{align*}
\begin{tikzpicture}[anchorbase,scale=.88]
	\draw[-to,thin] (0.28,0) to[out=90,in=-90] (-0.28,.75);
	\draw[-to,thin] (-0.28,0) to[out=90,in=-90] (0.28,.75);
	\draw[-,thin] (0.28,-.75) to[out=90,in=-90] (-0.28,0);
	\draw[-,thin] (-0.28,-.75) to[out=90,in=-90] (0.28,0);
   \node at (.5,0) {$\catlabel{\lambda}$};
\end{tikzpicture}
=
0,
\qquad\qquad\qquad
\begin{tikzpicture}[anchorbase,scale=.88]
	\draw[-to,thin] (0.4,-.75) to (-0.4,.75);
	\draw[to-,thin] (-0.4,-.75) to (0.4,.75);
	\limitteleporterOO{(-.25,.48)}{}{}{(.25,.48)};
       \node at (0.5,0) {$\catlabel{\lambda}$};
\end{tikzpicture}
-
\begin{tikzpicture}[anchorbase,scale=.88]
	\draw[-to,thin] (0.4,-.75) to (-0.4,.75);
	\draw[to-,thin] (-0.4,-.75) to (0.4,.75);
	\limitteleporterOO{(-.25,-.48)}{}{}{(.25,-.48)};
       \node at (0.5,0) {$\catlabel{\lambda}$};
\end{tikzpicture}
+
\begin{tikzpicture}[anchorbase,scale=.88]
\draw[to-,thin] (-.35,-1.8) to [out=90,in=180] (-.1,-1.4) to [out=0,in=90] (.15,-1.8);
	\draw[to-,thin] (-0.35,-.3)  to [out=-90,in=90] (.15,-.9) to [out=-90,in=0] (-0.1,-1.2);
 \draw[-,thin] (.15,-.3)  to [out=-90,in=90] (-.35,-.9) to [out=-90,in=180] (-.1,-1.2);
\bentlimitteleporterOO{(-.33,-1.56)}{}{}{(-.33,-1.04)}{(-.84,-1.36) and (-.84,-1.214)};
\draw[to-,thin,fill=black!10!white] (.3,-.9) arc(0:360:0.16);
\node at (.6,-1) {$\catlabel{\lambda}$};
\end{tikzpicture}
-
\begin{tikzpicture}[anchorbase,scale=.88]
\draw[to-,thin] (-.35,1.8) to [out=-90,in=180] (-.1,1.4) to [out=0,in=-90] (.15,1.8);
	\draw[to-,thin] (-0.35,.3)  to [out=90,in=-90] (.15,.9) to [out=90,in=0] (-0.1,1.2);
 \draw[-,thin] (.15,.3)  to [out=90,in=-90] (-.35,.9) to [out=90,in=180] (-.1,1.2);
\bentlimitteleporterOO{(.12,1.56)}{}{}{(.12,1.04)}{(.5,1.36) and (.5,1.214)};
\draw[to-,thin,fill=black!10!white] (-.5,1) arc(180:-180:0.16);
\node at (.4,.8) {$\catlabel{\lambda}$};
\end{tikzpicture}&=0,
\\
\begin{tikzpicture}[anchorbase,scale=1]
	\draw[-,thin] (0.28,0) to[out=90,in=-90] (-0.28,.6);
	\draw[-to,thin] (-0.28,0) to[out=90,in=-90] (0.28,.6);
	\draw[-,thin] (0.28,-.6) to[out=90,in=-90] (-0.28,0);
	\draw[to-,thin] (-0.28,-.6) to[out=90,in=-90] (0.28,0);
  \draw[-to,thin,fill=black!10!white] (-.55,0)++(.16,0) arc(-180:180:0.16);
  \node at (.6,0) {$\catlabel{\lambda}$};
\end{tikzpicture}
+\:\begin{tikzpicture}[anchorbase]
\draw[to-,thin] (-.5,-.6) to (-.5,.6);
\draw[-to,thin] (.3,-.6) to (.3,.6);
  \draw[-to,thin,fill=black!10!white] (.28,0)++(-.16,0) arc(-180:180:0.16);
\limitteleporterOO{(-.5,.4)}{}{}{(.3,.4)};
\limitteleporterOO{(-.5,-.4)}{}{}{(.3,-.4)};
\node at (.6,0) {$\catlabel{\lambda}$};
\end{tikzpicture}+
\begin{tikzpicture}[anchorbase,scale=1]
	\draw[-,thin] (0.3,-0.6) to[out=90, in=0] (0,-0.1);
	\draw[-to,thin] (0,-0.1) to[out = 180, in = 90] (-0.3,-.6);
  \draw[to-,thin] (1,0)++(-.28,0) arc(180:-180:0.28);
  \draw[to-,thin,fill=black!10!white] (1.54,0)++(-.16,0) arc(0:360:0.14);
   \node at (1.6,0) {$\catlabel{\lambda}$};
	\draw[to-,thin] (0.3,.6) to[out=-90, in=0] (0,0.1);
	\draw[-,thin] (0,0.1) to[out = -180, in = -90] (-0.3,.6);
\bentlimitteleporterOO{(.3,.4)}{}{}{(.88,.25)}{(.6,.4)};
\bentlimitteleporterOO{(.3,-.4)}{}{}{(.88,-.25)}{(.6,-.4)};
\end{tikzpicture}
-
{\textstyle\frac{1}{2}}\:
\begin{tikzpicture}[anchorbase,scale=1]
	\draw[-to,thin] (0.3,-.6) to[out=90,in=0] (0,-.1) to [out=180,in=90] (-0.3,-.6);
\draw[to-,thin] (0.3,.6) to[out=-90,in=0] (0,.1) to [out=180,in=-90] (-0.3,.6);
\bentlimitteleporterOO{(-.2,-.18)}{}{}{(-.2,.18)}{(-.6,-.1) and (-.6,.1)};
\opendot{.28,-.4};\node at (.58,-.4) {$\kmlabel{-1}$};
       \node at (0.7,0) {$\catlabel{\lambda}$};
\end{tikzpicture}
-
{\textstyle\frac{1}{2}}\:
\begin{tikzpicture}[anchorbase,scale=1]
	\draw[-to,thin] (0.3,-.6) to[out=90,in=0] (0,-.1) to [out=180,in=90] (-0.3,-.6);
\draw[to-,thin] (0.3,.6) to[out=-90,in=0] (0,.1) to [out=180,in=-90] (-0.3,.6);
\bentlimitteleporterOO{(-.2,-.18)}{}{}{(-.2,.18)}{(-.6,-.1) and (-.6,.1)};
\opendot{.26,.3};\node at (.58,.3) {$\kmlabel{-1}$};
       \node at (0.7,0) {$\catlabel{\lambda}$};
\end{tikzpicture}
&=0\:.
\end{align*}
The first of these is the first defining relation from \cref{KMrels1}.
The third one follows from \cref{fishing}
after making an obvious application of \cref{dumby1}.
To prove the second one, we take the equation from \cref{useless}
and add it to the equation obtained from it by applying $\tT$ to both sides to deduce that 
$$
\begin{tikzpicture}[anchorbase,scale=.88]
\draw[to-,thin] (-.35,1.8) to [out=-90,in=180] (-.1,1.4) to [out=0,in=-90] (.15,1.8);
	\draw[to-,thin] (-0.35,.3)  to [out=90,in=-90] (.15,.9) to [out=90,in=0] (-0.1,1.2);
 \draw[-,thin] (.15,.3)  to [out=90,in=-90] (-.35,.9) to [out=90,in=180] (-.1,1.2);
\bentlimitteleporterOO{(.12,1.56)}{}{}{(.12,1.04)}{(.5,1.36) and (.5,1.214)};
\draw[to-,thin,fill=black!10!white] (-.5,1) arc(180:-180:0.16);
\node at (.4,.8) {$\catlabel{\lambda}$};
\end{tikzpicture}
-
\begin{tikzpicture}[anchorbase,scale=.88]
\draw[to-,thin] (-.35,-1.8) to [out=90,in=180] (-.1,-1.4) to [out=0,in=90] (.15,-1.8);
	\draw[to-,thin] (-0.35,-.3)  to [out=-90,in=90] (.15,-.9) to [out=-90,in=0] (-0.1,-1.2);
 \draw[-,thin] (.15,-.3)  to [out=-90,in=90] (-.35,-.9) to [out=-90,in=180] (-.1,-1.2);
\bentlimitteleporterOO{(-.33,-1.56)}{}{}{(-.33,-1.04)}{(-.84,-1.36) and (-.84,-1.214)};
\draw[to-,thin,fill=black!10!white] (.3,-.9) arc(0:360:0.16);
\node at (.6,-1) {$\catlabel{\lambda}$};
\end{tikzpicture}
=
{\textstyle\frac{1}{2}}\:\begin{tikzpicture}[anchorbase,scale=1.2]
	\draw[-,thin] (0.2,-0.4) to[out=90, in=0] (0,0.05);
	\draw[-to,thin] (0,0.05) to[out = 180, in = 90] (-0.2,-0.4);
\draw[to-,thin] (-.2,.7) to [out=-90,in=180] (0,.25) to [out=0,in=-90] (.2,.7);
\bentlimitteleporterOO{(.17,.4)}{}{}{(.16,-.05)}{(.5,.3) and (.5,.05)};
\opendot{.2,-.25};\node at (.42,-.25) {$\kmlabel{-1}$};
\node at (.6,.2) {$\catlabel{\lambda}$};
\end{tikzpicture}+
{\textstyle\frac{1}{2}}\:\begin{tikzpicture}[anchorbase,scale=1.2]
	\draw[-,thin] (0.2,-0.4) to[out=90, in=0] (0,0.05);
	\draw[-to,thin] (0,0.05) to[out = 180, in = 90] (-0.2,-0.4);
\draw[to-,thin] (-.2,.7) to [out=-90,in=180] (0,.25) to [out=0,in=-90] (.2,.7);
\bentlimitteleporterOO{(.17,.4)}{}{}{(.16,-.05)}{(.5,.3) and (.5,.05)};
\opendot{.2,.55};\node at (.42,.55) {$\kmlabel{-1}$};
\node at (.6,.2) {$\catlabel{\lambda}$};
\end{tikzpicture}
\stackrel{\cref{dumby1}}{=}
{\textstyle\frac{1}{2}}\:\begin{tikzpicture}[anchorbase,scale=1.2]
	\draw[-,thin] (0.2,-0.4) to[out=90, in=0] (0,0.05);
	\draw[-to,thin] (0,0.05) to[out = 180, in = 90] (-0.2,-0.4);
\draw[to-,thin] (-.2,.7) to [out=-90,in=180] (0,.25) to [out=0,in=-90] (.2,.7);
\opendot{.2,-.25};\node at (.42,-.25) {$\kmlabel{-1}$};
\opendot{.2,.55};\node at (.42,.55) {$\kmlabel{-1}$};
\node at (.6,.2) {$\catlabel{\lambda}$};
\end{tikzpicture}\:.
$$
Now use \cref{veryeasy}.
\item The second relation from \cref{rels1} is the most complicated to check
since it involves an equality of $8 \times 8$ matrices,
and there are 20 non-zero entries in these matrices. After simplifying with \cref{stpauls,ernie}
and using 
the symmetries $\tR$ and $\tT$, the calculation reduces to checking
five relations, coming from the $(EEE 1_\lambda,EEE 1_\lambda)$-, 
$(FEE 1_\lambda,EEF 1_\lambda)$-,
$(EEF 1_\lambda, EEF 1_\lambda)$-,
$(EFE 1_\lambda,EEF 1_\lambda)$- 
and $(EFE 1_\lambda,EFE 1_\lambda)$-entries.
The first two of these are
\begin{align*}
\!\right]_{u^{-1}}\!\!\!\!.
\end{align*}
For the final equality in the expansion of $C_{\lambda;3}$,
the third term on the previous line is 0 by a similar argument to the proof of \cref{barre1,barre2}. 
To see that the second terms on the two lines are equal,
this is clear
when $\lambda < -2$
for the simple reason that the counterclockwise fake bubble polynomial is 0 in that case, and it is almost as easy to see that both terms are 0 when $\lambda=-2$.
So we may assume that $\lambda > -2$.
Then the clockwise bubble can be converted into an internal 
bubble using 
\cref{hazistalk}(2), after which it cancels with the counterclockwise internal bubble by \cref{stpauls}.
In the expansion of $C_{\lambda;4}$, three terms have been removed 
without explicit reference 
since they are 0 by the usual arguments.
Now we collect the pieces to see that $B_\lambda = D_\lambda + E_\lambda + F_\lambda$ where
\begin{align}\notag
D_\lambda &=\begin{tikzpicture}[anchorbase,scale=.85]
	\draw[to-,thin] (0,-.6) to[out=135,in=-90] (-.4,0) to [out=90,in=-135] (0,.6);
   \draw[to-,thin] (-.4,.6) to (.4,-.6);
   \draw[-to,thin] (-.4,-.6) to (.4,.6);
   \node at (.5,0) {$\catlabel{\lambda}$};
\end{tikzpicture}
-\left[\begin{tikzpicture}[anchorbase,scale=.85]
	\draw[-,thin] (0.2,0.6) to[out=-90, in=0] (0,.15);
	\draw[-to,thin] (0,.15) to[out = 180, in = -90] (-0.2,0.6);
	\draw[-to,thin] (1.1,-0.6) to (1.1,0.6);
   \pino{(1.1,0)}{-}{(.5,0)}{u};
    \node at (1.35,0) {$\catlabel{\lambda}$};
    \filledanticlockwisebubble{(-.6,0)};
      \node at (-.6,0) {$\kmlabel{u}$};
     \pino{(-0.18,0.4)}{-}{(-.9,.4)}{u};
     	\draw[to-,thin] (0.2,-.6) to[out=90, in=0] (0,-.15);
	\draw[-,thin] (0,-.15) to[out = 180, in = 90] (-0.2,-.6);
     \pino{(-0.18,-0.45)}{-}{(-.9,-.44)}{u};
\end{tikzpicture}\!\right]_{u^{-1}}\text{(the second term is new, it 
cancels a term on the next line)},\\\notag
E_\lambda&={\textstyle\frac{1}{2}}
\left[\:2\begin{tikzpicture}[anchorbase,scale=.85]
	\draw[-,thin] (0.2,0.6) to[out=-90, in=0] (0,.15);
	\draw[-to,thin] (0,.15) to[out = 180, in = -90] (-0.2,0.6);
	\draw[-to,thin] (1.1,-0.6) to (1.1,0.6);
   \pino{(1.1,0)}{-}{(.5,0)}{u};
    \node at (1.35,0) {$\catlabel{\lambda}$};
    \filledanticlockwisebubble{(-.6,0)};
      \node at (-.6,0) {$\kmlabel{u}$};
     \pino{(-0.18,0.4)}{-}{(-.9,.4)}{u};
     	\draw[to-,thin] (0.2,-.6) to[out=90, in=0] (0,-.15);
	\draw[-,thin] (0,-.15) to[out = 180, in = 90] (-0.2,-.6);
     \pino{(-0.18,-0.45)}{-}{(-.9,-.44)}{u};
\end{tikzpicture}
+2\begin{tikzpicture}[anchorbase,scale=.85]
	\draw[to-,thin] (-.4,.6) to[out=-90,in=180] (-.2,.25) to[out=0,in=-90] (0,.6);
   	\draw[-to,thin] (-.4,-.6) to[out=90,in=180] (-.2,-.25) to[out=0,in=90] (0,-.6);
   \draw[-to,thin] (.4,-.6) to (.4,.6);
   	\filledanticlockwisebubble{(-.7,0.03)};\node at (-.7,0.03) {$\kmlabel{u}$};
   \opendotsmall{.4,0};\node at (.1,0) {$\kmlabel{-1}$};
   \pino{(-.4,.45)}{-}{(-1.1,.45)}{u};
   \pino{(-.4,-.45)}{-}{(-1.1,-.45)}{u};
   \node at (.65,0) {$\catlabel{\lambda}$};
\end{tikzpicture}
-\begin{tikzpicture}[anchorbase,scale=.85]
	\draw[to-,thin] (-.4,.6) to[out=-90,in=180] (-.2,.25) to[out=0,in=-90] (0,.6);
   	\draw[-to,thin] (-.4,-.6) to[out=90,in=180] (-.2,-.25) to[out=0,in=90] (0,-.6);
   \draw[-to,thin] (.4,-.6) to (.4,.6);
   	\filledanticlockwisebubble{(-1,0.33)};\node at (-1,0.33) {$\kmlabel{u}$};
   \opendotsmall{.4,.2};\node at (.1,.23) {$\kmlabel{-1}$};
   \pino{(.4,-.05)}{-}{(-.6,-0.05)}{u};
   \pino{(-.4,-.48)}{-}{(-1.1,-.48)}{u};
   \node at (.65,0) {$\catlabel{\lambda}$};
\end{tikzpicture}
-\begin{tikzpicture}[anchorbase,scale=.85]
	\draw[to-,thin] (-.4,-.6) to[out=90,in=180] (-.2,-.25) to[out=0,in=90] (0,-.6);
   	\draw[-to,thin] (-.4,.6) to[out=-90,in=180] (-.2,.25) to[out=0,in=-90] (0,.6);
   \draw[-to,thin] (.4,-.6) to (.4,.6);
   	\filledanticlockwisebubble{(-1,-0.33)};\node at (-1,-0.33) {$\kmlabel{u}$};
   \opendotsmall{.4,-.2};\node at (.1,-.23) {$\kmlabel{-1}$};
   \pino{(.4,.05)}{-}{(-.6,0.05)}{u};
   \pino{(-.4,.48)}{-}{(-1.1,.48)}{u};
   \node at (.65,0) {$\catlabel{\lambda}$};
\end{tikzpicture}
-\begin{tikzpicture}[anchorbase,scale=.85]
	\draw[-,thin] (0.2,0.6) to[out=-90, in=0] (0,.28);
	\draw[-to,thin] (0,.28) to[out = 180, in = -90] (-0.2,0.6);
	\draw[-to,thin] (1.3,-0.6) to (1.3,0.6);
    \node at (1.64,-.2) {$\catlabel{\lambda}$};
    \filledanticlockwisebubble{(0.2,-.01)};\node at (0.2,0) {$\kmlabel{u}$};
     	\draw[to-,thin] (0.2,-.6) to[out=90, in=0] (0,-.28);
	\draw[-,thin] (0,-.28) to[out = 180, in = 90] (-0.2,-.6);
   \pino{(.2,-.45)}{-}{(.85,-.45)}{u};
   \pino{(.2,.45)}{-}{(.85,.45)}{u};
\pino{(1.3,0)}{-}{(.7,0)}{u};
\opendotsmall{1.3,.3};\node at (1.64,.3) {$\kmlabel{-1}$};
\limitbandoo{(-.16,.4)}{}{}{(-.16,-.4)};
\end{tikzpicture}\!\right]_{u^{-1}}\!\!\!\!,\\\label{flambda}
F_\lambda&=4\:
\begin{tikzpicture}[anchorbase,scale=.85]
	\draw[to-,thin] (0,-.6) to (0,.6);
	   \draw[to-,thin] (-.4,.6) to (-.4,-.6);
   \draw[-to,thin] (.4,-.6) to (.4,.6);
   \node at (.75,0) {$\catlabel{\lambda}$};
   \draw[to-,thin,fill=black!10!white] (-.26,.12) arc(0:360:0.15);
   \draw[-to,thin,fill=black!10!white] (.26,.12) arc(-180:180:0.15);
\opendotsmall{-.4,-.2};  
\opendotsmall{0,0.1};  
   \limitteleporteroo{(-.4,.4)}{}{}{(0,0.4)};
   \limitteleporteroo{(-.4,-.4)}{}{}{(0,-0.4)};
   \limitteleporteroo{(.4,-.18)}{}{}{(0,-0.18)};
\end{tikzpicture}\!\!-\begin{tikzpicture}[anchorbase,scale=.85]
   \draw[-to,thin] (-.4,-.6) to (-.4,.6);
   \draw[to-,thin] (0,-.6) to (0,.6);
   \draw[-to,thin] (.6,-.6) to (.6,.6);
   \opendotsmall{.6,0};\node at (.3,0) {$\kmlabel{-1}$};
   \node at (.85,0) {$\catlabel{\lambda}$};
\end{tikzpicture}\!\!+
2\:\begin{tikzpicture}[anchorbase,scale=.85]
	\draw[-to,thin] (0.2,0.6) to (.2,-.6);
		\draw[-to,thin] (-.2,-.6) to (-0.2,0.6);
	\draw[-to,thin] (1.3,-0.6) to (1.3,0.6);
    \node at (1.6,0) {$\catlabel{\lambda}$};
\clockwisebubble{(0.75,.1)};
\limitteleporteroo{(.95,.1)}{}{}{(1.3,.1)};
\limitteleporteroo{(.55,.1)}{}{}{(-.2,.1)};
	\draw[to-,thin,fill=black!10!white] (1.15,-.3) arc(180:-180:0.15);
     \opendotsmall{-.2,-.2};
\end{tikzpicture}-2
\left[\begin{tikzpicture}[anchorbase,scale=.85]
	\draw[-to,thin] (-0.2,-0.6) to (-.2,.6);
	\draw[-to,thin] (0.2,0.6) to (.2,-.6);
		\draw[-to,thin] (1.2,-0.6) to (1.2,0.6);
    \node at (1.5,0) {$\catlabel{\lambda}$};
 \filledclockwisebubble{(.6,-.2)};\node at (.6,-.2) {$\kmlabel{u}$};
   \pino{(1.2,.3)}{+}{(.5,.3)}{u};
     \pino{(-0.2,0.3)}{+}{(-.9,.3)}{u};
	\draw[to-,thin,fill=black!10!white] (1.05,-0.2) arc(180:-180:0.15);
     \opendotsmall{-.2,-.2};
     \end{tikzpicture}\!\right]_{u^{-1}}
\!\!\!\!.
\end{align}
Next, observe that $D_\lambda + \tR(D_{-\lambda-2}) = 0$ by \cref{altbraid}.
Also $E_\lambda = 0$, indeed, already the expression inside the square brackets vanishes
as follows from the elementary identity
$$\textstyle
\frac{2}{(u-x)(u-y)(u-z)} +
\frac{2}{(u-x)(u-y) z} -
\frac{1}{
(u-y)(u-z)z}-\frac{1}{(u-x)(u-z)z}
-\frac{x+y}{(u-x)(u-y)(u-z)z}=0
$$
where $x$ represents the dot on the top left component, 
$y$ the dot on the bottom left component, and $z$ the dot on the rightmost vertical string.
\noindent
In view of all of this, it just remains to show that
$F_\lambda + \tR(F_{-\lambda-2}) = 0$.
Since $\tT$ is an involution
and we always have that $\lambda \leq -1$ or $-\lambda-2 \leq -1$, 
we may assume without loss of generality that $\lambda \leq -1$.
Hence, we have that
\begin{align}
$,
hence, it cancels
with the second term on the penultimate line above.
When $\lambda \leq -3$, it is easy to see from \cref{useful}
that the remaining four terms in \cref{gac} are all 0, and the desired
equality follows.
If $\lambda = -2$, the remaining terms are 0 again---this time two of them are non-zero but they cancel with each other.
Finally, when $\lambda =-1$ the remaining four terms simplify
 by a calculation using also \cref{funeral}
 to obtain the final term in the formula we are trying to prove.
We use the simplified formulae for 
$F_{\lambda}$ and $\tR(F_{-\lambda-2})$ now established
to see that $F_{\lambda}+\tR(F_{-\lambda-2})=0$.
Indeed, after cancelling the clockwise internal bubble and a dot from the leftmost string, the sum is equal to
$$
2\:\begin{tikzpicture}[anchorbase,scale=.85]
	\draw[-to,thin] (-0.2,0.6) to (-.2,-.6);
		\draw[-to,thin] (.2,-.6) to (0.2,0.6);
	\draw[-to,thin] (-1.3,-0.6) to (-1.3,0.6);
    \node at (.45,0) {$\catlabel{\lambda}$};
\anticlockwisebubble{(-0.75,0)};
\limitteleporteroo{(-.95,0)}{}{}{(-1.3,0)};
\limitteleporteroo{(-.55,0)}{}{}{(.2,0)};
\end{tikzpicture}-
2\:\begin{tikzpicture}[anchorbase,scale=.85]
	\draw[to-,thin] (0,-.6) to (0,.6);
	   \draw[to-,thin] (-.4,.6) to (-.4,-.6);
   \draw[-to,thin] (1.1,-.6) to (1.1,.6);
   \node at (1.35,0) {$\catlabel{\lambda}$};
\anticlockwisebubble{(0.55,0)};
\limitteleporteroo{(.75,0)}{}{}{(1.1,0)};
\limitteleporteroo{(.35,0)}{}{}{(-.4,0)};
   \limitteleporteroo{(-.4,.4)}{}{}{(0,0.4)};
   \limitteleporteroo{(-.4,-.4)}{}{}{(0,-0.4)};
\end{tikzpicture}
-4\:
\begin{tikzpicture}[anchorbase,scale=.85]
	\draw[to-,thin] (0,-.6) to (0,.6);
	   \draw[to-,thin] (.4,.6) to (.4,-.6);
   \draw[-to,thin] (-.4,-.6) to (-.4,.6);
   \node at (.75,0) {$\catlabel{\lambda}$};
   \draw[-to,thin,fill=black!10!white] (.26,.12) arc(-180:180:0.15);
\opendotsmall{.4,-.2};  
   \limitteleporteroo{(.4,.4)}{}{}{(0,0.4)};
   \limitteleporteroo{(.4,-.4)}{}{}{(0,-0.4)};
   \limitteleporteroo{(-.4,0)}{}{}{(0,0)};
\end{tikzpicture}
-4\:\begin{tikzpicture}[anchorbase,scale=.85]
	\draw[to-,thin] (0,-.6) to (0,.6);
	   \draw[to-,thin] (-.4,.6) to (-.4,-.6);
   \draw[-to,thin] (.4,-.6) to (.4,.6);
   \node at (.75,0) {$\catlabel{\lambda}$};
   \draw[-to,thin,fill=black!10!white] (.26,.12) arc(-180:180:0.15);
\opendotsmall{0.4,-0.4};  
   \limitteleporteroo{(-.4,.3)}{}{}{(0,0.3)};
   \limitteleporteroo{(-.4,-.4)}{}{}{(0,-0.4)};
   \limitteleporteroo{(.4,-.18)}{}{}{(0,-0.18)};
\end{tikzpicture}
+2 \delta_{\lambda,-1}
\:\begin{tikzpicture}[anchorbase,scale=.85]
	\draw[to-,thin] (0,-.6) to (0,.6);
	   \draw[to-,thin] (-.4,.6) to (-.4,-.6);
   \draw[-to,thin] (.4,-.6) to (.4,.6);
   \node at (.55,0) {$\catlabel{\lambda}$};
   \limitteleporteroo{(-.4,.25)}{}{}{(0,0.25)};
   \limitteleporteroo{(-.4,-.25)}{}{}{(0,-0.25)};
\end{tikzpicture},
$$
which is 0 by \cref{lastgasp}.

It just remains to prove \cref{magic}.
By \cref{southwalespolice1,southwalespolice2}, 
it suffices to show that
$$
(-1)^\lambda \left[\Omega_t\left(\O(u)\right)_\lambda\right]_{u^{< 0}}
=
\left[\begin{tikzpicture}[anchorbase,scale=1.2]
\filledclockwisebubble{(0,0)};
\node at (0,0) {$\kmlabel{-u}$};
\end{tikzpicture}\:\:\:
\begin{tikzpicture}[anchorbase]
\anticlockwisebubble{(0,0)};
\pinO{(.2,0)}{-}{(.9,0)}{u};
\node at (1.25,0) {$\catlabel{\lambda}$};
\end{tikzpicture}
\right]_{u^{< 0}}
+
\left[
\begin{tikzpicture}[anchorbase]
\clockwisebubble{(0,0)};
\pinO{(-.2,0)}{-}{(-.9,0)}{-u};
\end{tikzpicture}
\:\:\:
\begin{tikzpicture}[anchorbase]
\filledanticlockwisebubble{(0,0)};
\node at (0,0) {$\kmlabel{u}$};
\node at (.4,0) {$\catlabel{\lambda}$};
\end{tikzpicture}
\right]_{u^{< 0}}+
\begin{tikzpicture}[anchorbase]
\clockwisebubble{(0,0)};
\pinO{(-.2,0)}{-}{(-.9,0)}{-u};
\end{tikzpicture}\:\:\:
\begin{tikzpicture}[anchorbase]
\anticlockwisebubble{(0,0)};
\pinO{(.2,0)}{-}{(.9,0)}{u};
\node at (1.25,0) {$\catlabel{\lambda}$};
\end{tikzpicture}
.
$$
Equivalently, since
$\begin{tikzpicture}[anchorbase]
\clockwisebubble{(0,0)};
\pinO{(-.2,0)}{-}{(-.9,0)}{-u};
\node at (.4,0) {$\catlabel{\lambda}$};
\end{tikzpicture}=-
\begin{tikzpicture}[anchorbase]
\clockwisebubble{(0,0)};
\pinO{(-.2,0)}{+}{(-.9,0)}{u};
\node at (.4,0) {$\catlabel{\lambda}$};
\end{tikzpicture}
$, we show that
$$
(-1)^\lambda \left[\Omega_t\left(\O(u)\right)_\lambda\right]_{u^{< 0}}
=
\left[\begin{tikzpicture}[anchorbase,scale=1.2]
\filledclockwisebubble{(0,0)};
\node at (0,0) {$\kmlabel{-u}$};
\end{tikzpicture}\:\:\:
\begin{tikzpicture}[anchorbase]
\anticlockwisebubble{(0,0)};
\pinO{(.2,0)}{-}{(.9,0)}{u};
\node at (1.25,0) {$\catlabel{\lambda}$};
\end{tikzpicture}
\right]_{u^{< 0}}
-
\left[
\begin{tikzpicture}[anchorbase]
\clockwisebubble{(0,0)};
\pinO{(-.2,0)}{+}{(-.9,0)}{u};
\end{tikzpicture}
\:\:\:
\begin{tikzpicture}[anchorbase]
\filledanticlockwisebubble{(0,0)};
\node at (0,0) {$\kmlabel{u}$};
\node at (.4,0) {$\catlabel{\lambda}$};
\end{tikzpicture}
\right]_{u^{< 0}}
-\begin{tikzpicture}[anchorbase]
\clockwisebubble{(0,0)};
\pinO{(-.2,0)}{+}{(-.9,0)}{u};
\end{tikzpicture}\:\:\:
\begin{tikzpicture}[anchorbase]
\anticlockwisebubble{(0,0)};
\pinO{(.2,0)}{-}{(.9,0)}{u};
\node at (1.25,0) {$\catlabel{\lambda}$};
\end{tikzpicture}.
$$
By \cref{deltadef} and the definition of $\Omega_t$, we have that
\begin{align*}
(-1)^\lambda 
\left[\Omega_t\left(\O(u)\right)\right]_{u^{<0}}&=
-2u \left[\:
\begin{tikzpicture}[anchorbase,scale=.7]
\node at (1.8,-.55) {$\catlabel{\lambda}$};
\draw[-to,thin] (.3,-.4) arc(-180:180:.4);
\pinO{(.95,-.08)}{-}{(1.95,-.08)}{u};
\draw[-to,thin,fill=black!10!white] (1.22,-.5) arc(0:-360:0.2);
\end{tikzpicture}
+
\begin{tikzpicture}[anchorbase,scale=.7]
\draw[-to,thin] (.3,-.4) arc(0:-360:.4);
\draw[-to,thin,fill=black!10!white] (-.62,-.5) arc(180:540:0.2);
\pinO{(-.3,-.08)}{+}{(-1.3,-.08)}{u};
\node at (.65,-.35) {$\catlabel{\lambda}$};
\end{tikzpicture}
\:\right]_{u^{<-1}}.
\end{align*}
Expanding the definitions of the internal bubbles using \cref{internal2,internal1}, this equals $A+B+C$
where
\begin{align*}
A&=
u\left[\sum_{n=0}^{-\lambda}
(-1)^{-\lambda-n}\:\;
\begin{tikzpicture}[anchorbase,scale=1]
\draw[-to,thin] (-.3,.2) arc(-180:180:.2);
\clockwisebubble{(-.75,.2)};\node at (-.75,.2) {$\kmlabel{n}$};
\pinO{(0.1,.2)}{-}{(.75,.2)}{u};
\opendot{0,0.02};\node at (0.5,-.15) {$\kmlabel{-\lambda-n-1}$};
\node at (0.45,.1) {$\phantom{\kmlabel{-\lambda-n-1}}$};
      \node at (1.15,0.2) {$\catlabel{\lambda}$};
\end{tikzpicture}
\right]_{u^{<-1}},\\
B&=
u\left[\sum_{n=0}^\lambda
(-1)^{\lambda-n}\:
\begin{tikzpicture}[anchorbase,scale=1]
\draw[-to,thin] (.3,.2) arc(0:-360:.2);
\anticlockwisebubble{(.75,.2)};\node at (.75,.2) {$\kmlabel{n}$};
\pinO{(-.1,.2)}{+}{(-.75,.2)}{u};
\opendot{0,0.02};
\node at (-0.5,-.15) {$\kmlabel{\lambda-n-1}$};
\node at (-0.45,.1) {$\phantom{\kmlabel{\lambda-n-1}}$};
      \node at (1.15,0.2) {$\catlabel{\lambda}$};
\end{tikzpicture}
\right]_{u^{<-1}},\\
C&=
-u \left[
\begin{tikzpicture}[anchorbase]
\draw[to-,thin] (-.17,0) arc(180:-180:.2);
\draw[to-,thin] (1.17,0) arc(0:360:.2);
\limitteleporterOO{(.22,0)}{}{}{(.78,0)};
\opendot{.08,-.18};\node at (.36,-.23) {$\kmlabel{-1}$};
\pinO{(.08,.18)}{-}{(.78,.4)}{u};
\node at (1.45,0) {$\catlabel{\lambda}$};
\end{tikzpicture}+\begin{tikzpicture}[anchorbase]
\draw[to-,thin] (-.17,0) arc(180:-180:.2);
\draw[to-,thin] (1.17,0) arc(0:360:.2);
\limitteleporterOO{(.22,0)}{}{}{(.78,0)};
\opendot{.88,-.18};\node at (.6,-.23) {$\kmlabel{-1}$};
\pinO{(.98,.18)}{+}{(.28,.4)}{u};
\node at (1.45,0) {$\catlabel{\lambda}$};
\end{tikzpicture}
\right]_{u^{<-1}}.
\end{align*}
We complete the proof by showing that
$A=\left[\begin{tikzpicture}[anchorbase,scale=1.2]
\filledclockwisebubble{(0,0)};
\node at (0,0) {$\kmlabel{-u}$};
\end{tikzpicture}\:\:\:
\begin{tikzpicture}[anchorbase]
\anticlockwisebubble{(0,0)};
\pinO{(.2,0)}{-}{(.9,0)}{u};
\node at (1.25,0) {$\catlabel{\lambda}$};
\end{tikzpicture}
\right]_{u^{< 0}}$,
$B = -\left[
\begin{tikzpicture}[anchorbase]
\clockwisebubble{(0,0)};
\pinO{(-.2,0)}{+}{(-.9,0)}{u};
\end{tikzpicture}
\:\:\:
\begin{tikzpicture}[anchorbase]
\filledanticlockwisebubble{(0,0)};
\node at (0,0) {$\kmlabel{u}$};
\node at (.4,0) {$\catlabel{\lambda}$};
\end{tikzpicture}
\right]_{u^{< 0}}$ and 
$C = -\begin{tikzpicture}[anchorbase]
\clockwisebubble{(0,0)};
\pinO{(-.2,0)}{+}{(-.9,0)}{u};
\end{tikzpicture}\:\:\:
\begin{tikzpicture}[anchorbase]
\anticlockwisebubble{(0,0)};
\pinO{(.2,0)}{-}{(.9,0)}{u};
\node at (1.25,0) {$\catlabel{\lambda}$};
\end{tikzpicture}$.
For $A$ and $B$, this follows simply by expanding the definitions of the pins and fake bubble polynomials, e.g., for $B$ we have that
\begin{align*}
B &= u \sum_{n=0}^\lambda \sum_{r \geq 1}
(-1)^{\lambda-n+r}
\begin{tikzpicture}[anchorbase]
\clockwisebubble{(0,0)};
\opendot{-.2,0};
\node at (-.94,0) {$\kmlabel{\lambda-n-1+r}$};
\anticlockwisebubble{(.6,0)}; \node at (.6,0) {$\kmlabel{n}$};
\node at (1,0) {$\catlabel{\lambda}$};
\end{tikzpicture}
u^{-r-1}\\&=
-\sum_{n=0}^\lambda \sum_{r \geq \lambda-n}
(-1)^{r}
\begin{tikzpicture}[anchorbase]
\clockwisebubble{(0,0)};
\opendot{-.2,0};
\node at (-.4,0) {$\kmlabel{r}$};
\anticlockwisebubble{(.6,0)}; \node at (.6,0) {$\kmlabel{n}$};
\node at (1,0) {$\catlabel{\lambda}$};
\end{tikzpicture}
u^{\lambda-n-r-1}=-\left[
\begin{tikzpicture}[anchorbase]
\clockwisebubble{(0,0)};
\pinO{(-.2,0)}{+}{(-.9,0)}{u};
\end{tikzpicture}
\:\:\:
\begin{tikzpicture}[anchorbase]
\filledanticlockwisebubble{(0,0)};
\node at (0,0) {$\kmlabel{u}$};
\node at (.4,0) {$\catlabel{\lambda}$};
\end{tikzpicture}
\right]_{u^{< 0}}.
\end{align*}
For $C$, we first use the relations \cref{dumby1,dumby2} to see that
$$
\begin{tikzpicture}[anchorbase]
\draw[to-,thin] (-.17,0) arc(180:-180:.2);
\draw[to-,thin] (1.17,0) arc(0:360:.2);
\limitteleporterOO{(.22,0)}{}{}{(.78,0)};
\opendot{.08,-.18};\node at (.36,-.23) {$\kmlabel{-1}$};
\pinO{(.08,.18)}{-}{(.78,.4)}{u};
\node at (1.45,0) {$\catlabel{\lambda}$};
\end{tikzpicture}+\begin{tikzpicture}[anchorbase]
\draw[to-,thin] (-.17,0) arc(180:-180:.2);
\draw[to-,thin] (1.17,0) arc(0:360:.2);
\limitteleporterOO{(.22,0)}{}{}{(.78,0)};
\opendot{.88,-.18};
\node at (.6,-.23) {$\kmlabel{-1}$};
\pinO{(.98,.18)}{+}{(.28,.4)}{u};
\node at (1.45,0) {$\catlabel{\lambda}$};
\end{tikzpicture}
=\begin{tikzpicture}[anchorbase]
\draw[to-,thin] (-.17,0) arc(180:-180:.2);
\draw[to-,thin] (2.4,0) arc(0:360:.2);
\opendot{.08,-.18};
\node at (.36,-.23) {$\kmlabel{-1}$};
\node at (.36,.23) {$\phantom{\kmlabel{-1}}$};
\pinO{(2.0,0)}{+}{(1.3,0)}{u};
\pinO{(.2,0)}{-}{(.8,0)}{u};
\node at (2.6,0) {$\catlabel{\lambda}$};
\end{tikzpicture}
+\begin{tikzpicture}[anchorbase]
\draw[to-,thin] (-.97,0) arc(180:-180:.2);
\draw[to-,thin] (1.37,0) arc(0:360:.2);
\opendot{1.08,-.18};
\node at (.8,-.23) {$\kmlabel{-1}$};
\node at (.8,.23) {$\phantom{\kmlabel{-1}}$};
\opendot{-.57,0};\node at (-.27,0) {$\kmlabel{-1}$};
\pinO{(.97,0)}{+}{(.27,0)}{u};
\node at (1.6,0) {$\catlabel{\lambda}$};
\end{tikzpicture}\:.
$$
Now we can expand the definitions to get that
\begin{align*}
C&=
-u\sum_{r,s \geq 0} (-1)^r
\begin{tikzpicture}[anchorbase]
\clockwisebubble{(0,0)};
\opendot{-.2,0};\node at (-.4,0) {$\kmlabel{r}$};
\anticlockwisebubble{(.6,0)};
\opendot{.8,0};\node at (1.16,0) {$\kmlabel{s-1}$};
\end{tikzpicture}
u^{-r-s-2}
-u\sum_{r \geq 1}(-1)^r
\begin{tikzpicture}[anchorbase]
\clockwisebubble{(0,0)};
\opendot{-.2,0};\node at (-.55,0) {$\kmlabel{r-1}$};
\anticlockwisebubble{(.6,0)};
\opendot{.8,0};\node at (1.08,0) {$\kmlabel{-1}$};
\end{tikzpicture}u^{-r-1}\\
&=-u\sum_{\substack{r\geq 0 \\s \geq 1}} (-1)^r
\begin{tikzpicture}[anchorbase]
\clockwisebubble{(0,0)};
\opendot{-.2,0};\node at (-.4,0) {$\kmlabel{r}$};
\anticlockwisebubble{(.6,0)};
\opendot{.8,0};\node at (1.16,0) {$\kmlabel{s-1}$};
\end{tikzpicture}
u^{-r-s-2}\\&=-\sum_{r,s\geq 0} (-1)^r
\begin{tikzpicture}[anchorbase]
\clockwisebubble{(0,0)};
\opendot{-.2,0};\node at (-.4,0) {$\kmlabel{r}$};
\anticlockwisebubble{(.6,0)};
\opendot{.8,0};\node at (1,0) {$\kmlabel{s}$};
\end{tikzpicture}
u^{-r-s-2}=-\:\begin{tikzpicture}[anchorbase]
\clockwisebubble{(0,0)};
\pinO{(-.2,0)}{+}{(-.9,0)}{u};
\end{tikzpicture}\:\:\:
\begin{tikzpicture}[anchorbase]
\anticlockwisebubble{(0,0)};
\pinO{(.2,0)}{-}{(.9,0)}{u};
\node at (1.25,0) {$\catlabel{\lambda}$};
\end{tikzpicture}\:.
\end{align*}
This completes the proof of the theorem. 
\end{proof}

\begin{remark}\label{morepsit}
The monoidal functor in \cref{psit} is certainly not unique. One way to obtain alternative forms, maintaining the property \cref{magic} 
and preserving the
leading terms
$\textupdot{\scriptstyle\color{catcolor}\lambda}$ and
$\textupcrossing{\scriptstyle\color{catcolor}\lambda}$
in the formulae for $\Omega_t\left(\textdot\:\right)_\lambda$
and $\Omega_t\left(\textcrossing\:\right)_\lambda$,
is as follows.
Suppose that we are given invertible 2-morphisms
$\textdownstar{\scriptstyle\color{catcolor}\lambda}$
in $\fU(\sl_2)$ for each $\lambda \in t + 2\Z$,
such that $\textdownstar{\scriptstyle\color{catcolor}\lambda}$ and $\textdowndot{\scriptstyle\color{catcolor}\lambda}$ commute.
We denote the inverse of $\textdownstar{\scriptstyle\color{catcolor}\lambda}$
by labeling the star with $-1$.
Then there is a strict graded monoidal functor
$\Omega^\star_t:\cNB_t \rightarrow \cU(\sl_2;t)$
taking $B$ to $E \oplus F$
and defined 
on generating morphisms by
\begin{align*}
\Omega^\star_t\left(\textdot\:\right)_\lambda &:=
\begin{tikzpicture}[anchorbase]
\draw[-to,thin] (0,-.4) to (0,.4);
\opendot{0,0};
\node at (0.25,0) {$\catlabel{\lambda}$};
\end{tikzpicture}
-\begin{tikzpicture}[anchorbase]
\draw[to-,thin] (0,-.4) to (0,.4);
\opendot{0,0};
\node at (0.25,0) {$\catlabel{\lambda}$};
\end{tikzpicture}
,\\
\Omega_t^\star\left(\;\textcrossing\;\right)_\lambda &:=
\!\begin{tikzpicture}[anchorbase,scale=.8]
	\draw[-to,thin] (0.6,-.6) to (-0.6,.6);
	\draw[-to,thin] (-0.6,-.6) to (0.6,.6);
       \node at (0.5,0) {$\catlabel{\lambda}$};
\end{tikzpicture}
\!+\!\!\begin{tikzpicture}[anchorbase,scale=.8]
	\draw[to-,thin] (0.6,-.6) to (-0.6,.6);
	\draw[to-,thin] (-0.6,-.6) to (0.6,.6);
	\node at (-.28,-.3) {$\star$};
	\node at (.28,-.3) {$\star$};
\node at (-.3,.28) {$\star$};\node at (-.65,.3) {$\kmlabel{-1}$};
\node at (.3,.28) {$\star$};\node at (.65,.3) {$\kmlabel{-1}$};
       \node at (0.5,0) {$\catlabel{\lambda}$};
\end{tikzpicture}
\!+\!\!\!\begin{tikzpicture}[anchorbase,scale=.8]
	\draw[to-,thin] (0.6,-.6) to (-0.6,.6);
	\draw[-to,thin] (-0.6,-.6) to (0.6,.6);
	\node at (.28,-.3) {$\star$};
\node at (-.3,.28) {$\star$};\node at (-.65,.3) {$\kmlabel{-1}$};
       \node at (0.5,0) {$\catlabel{\lambda}$};
\end{tikzpicture}
\!+
\begin{tikzpicture}[anchorbase,scale=.8]
	\draw[-to,thin] (0.6,-.6) to (-0.6,.6);
	\draw[to-,thin] (-0.6,-.6) to (0.6,.6);
\draw[-to,thin,fill=black!10!white] (-.4,.1) arc(-135:225:0.2);
\draw[-to,thin,fill=black!10!white] (.4,-.1) arc(45:-315:0.2);
\node at (.3,.28) {$\star$};\node at (.65,.3) {$\kmlabel{-1}$};
	\node at (-.28,-.3) {$\star$};
       \node at (0.7,0) {$\catlabel{\lambda}$};
\end{tikzpicture}
\!\!+\begin{tikzpicture}[anchorbase,scale=.8]
	\draw[-to,thin] (-0.7,-.6) to (-0.7,.6);
	\draw[to-,thin] (.1,-.6) to (.1,.6);
\limitteleporterOO{(-.7,0)}{}{}{(.1,0)};
            \node at (.4,0) {$\catlabel{\lambda}$};
\end{tikzpicture}
\!-\begin{tikzpicture}[anchorbase,scale=.8]
 	\draw[-to,thin] (-0.45,-.6) to[out=90,in=180] (-.1,-.1) to[out=0,in=90] (0.25,-.6);
 	\draw[to-,thin] (-0.45,.6) to[out=-90,in=180] (-.1,.1) to[out=0,in=-90] (0.25,.6);
     \bentlimitteleporterOO{(.06,.14)}{}{}{(.05,-.15)}{(.62,.1) and (.62,-.1)};
	\node at (.21,-.35) {$\star$};
\node at (.22,.4) {$\star$};\node at (.55,.4) {$\kmlabel{-1}$};
\draw[-to,thin,fill=black!10!white] (-.5,.13) arc(-150:210:0.2);
   \node at (.8,0) {$\catlabel{\lambda}$};
\end{tikzpicture}
\!-
\begin{tikzpicture}[anchorbase,scale=.8]
	\draw[to-,thin] (-0.7,-.6) to (-0.7,.6);
	\draw[-to,thin] (.1,-.6) to (.1,.6);
\limitteleporterOO{(-.7,0)}{}{}{(.1,0)};
            \node at (.4,0) {$\catlabel{\lambda}$};
\end{tikzpicture}
+\!\!\begin{tikzpicture}[anchorbase,scale=.8]
 	\draw[to-,thin] (-0.45,-.6) to[out=90,in=180] (-.1,-.1) to[out=0,in=90] (0.25,-.6);
 	\draw[-to,thin] (-0.45,.6) to[out=-90,in=180] (-.1,.1) to[out=0,in=-90] (0.25,.6);
     \bentlimitteleporterOO{(-.25,.15)}{}{}{(-.25,-.15)}{(-.8,.1) and (-.8,-.1)};
\node at (-.42,.4) {$\star$};\node at (-.75,.4) {$\kmlabel{-1}$};
	\node at (-.41,-.35) {$\star$};
\draw[-to,thin,fill=black!10!white] (.32,-0.18) arc(30:-330:0.2);
   \node at (.7,0) {$\catlabel{\lambda}$};
\end{tikzpicture}\:,\\
\Omega_t^\star\left(\:\,\txtcap\:\,\right)_\lambda &:=
\begin{tikzpicture}[anchorbase,scale=.8]
	\draw[-,thin] (-0.4,-0.3) to[out=90, in=180] (0,0.3);
	\draw[-to,thin] (-0,0.3) to[out = 0, in = 90] (0.4,-0.3);
\node at (.34,0.05) {$\star$};
\node at (.7,0) {$\catlabel{\lambda}$};
\end{tikzpicture}+
\begin{tikzpicture}[anchorbase,scale=.8]
	\draw[to-,thin] (-0.4,-0.3) to[out=90, in=180] (0,0.3);
	\draw[-,thin] (-0,0.3) to[out = 0, in = 90] (0.4,-0.3);
\node at (-.34,0.05) {$\star$};
\draw[-to,thin,fill=black!10!white] (.4,0.2) arc(30:-330:0.2);
\node at (.75,0) {$\catlabel{\lambda}$};
\end{tikzpicture}
,\\
\Omega_t^\star\left(\:\,\txtcup\,\:\right)_\lambda &:=\begin{tikzpicture}[anchorbase,scale=.8] 
	\draw[-,thin] (-0.4,0.3) to[out=-90, in=180] (0,-0.3);
	\draw[-to,thin] (-0,-0.3) to[out = 0, in = -90] (0.4,0.3);
\node at (-.34,0) {$\star$};\node at (-.66,0) {$\kmlabel{-1}$};
      \node at (.65,0) {$\catlabel{\lambda}$};
\end{tikzpicture}+
\begin{tikzpicture}[anchorbase,scale=.8]
	\draw[to-,thin] (-0.4,0.3) to[out=-90, in=180] (0,-0.3);
	\draw[-,thin] (-0,-0.3) to[out = 0, in = -90] (0.4,0.3);
\draw[-to,thin,fill=black!10!white] (-.4,-.2) arc(-150:210:0.2);
\node at (.34,0) {$\star$};\node at (.66,0) {$\kmlabel{-1}$};
\node at (1.2,0) {$\catlabel{\lambda}$};
\end{tikzpicture}
\end{align*}
for $\lambda \in t+2\Z$.
The proof of the existence of this is almost identical to the proof in \cref{psit}---in all of the calculations stars cancel with their inverses so that the final relations that need to be checked reduce to the same ones as checked before.
For example, 
taking $\textdownstar{\scriptstyle\color{catcolor}\lambda}:=\begin{tikzpicture}[anchorbase,scale=.6]
\draw[-to,thin] (0,.5) to (0,-.5);
\anticlockwiseinternalbubble{(0,0)};
\node at (.4,0) {$\catlabel{\lambda}$};
\end{tikzpicture}$
produces a monoidal functor $\widetilde{\Omega}_t$ with
\begin{align*}
\widetilde{\Omega}_t\left(\textdot\:\right)_\lambda &:=
\begin{tikzpicture}[anchorbase]
\draw[-to,thin] (0,-.4) to (0,.4);
\opendot{0,0};
\node at (0.25,0) {$\catlabel{\lambda}$};
\end{tikzpicture}
-\begin{tikzpicture}[anchorbase]
\draw[to-,thin] (0,-.4) to (0,.4);
\opendot{0,0};
\node at (0.25,0) {$\catlabel{\lambda}$};
\end{tikzpicture}
,\\
\widetilde{\Omega}_t\left(\;\textcrossing\;\right)_\lambda &:=
\!\begin{tikzpicture}[anchorbase,scale=.8]
	\draw[-to,thin] (0.6,-.6) to (-0.6,.6);
	\draw[-to,thin] (-0.6,-.6) to (0.6,.6);
       \node at (0.5,0) {$\catlabel{\lambda}$};
\end{tikzpicture}
\!+\!\!\begin{tikzpicture}[anchorbase,scale=.8]
	\draw[to-,thin] (0.6,-.6) to (-0.6,.6);
	\draw[to-,thin] (-0.6,-.6) to (0.6,.6);
       \node at (0.5,0) {$\catlabel{\lambda}$};
\end{tikzpicture}
\!+\!\!\!\begin{tikzpicture}[anchorbase,scale=.8]
	\draw[to-,thin] (0.6,-.6) to (-0.6,.6);
	\draw[-to,thin] (-0.6,-.6) to (0.6,.6);
\draw[to-,thin,fill=black!10!white] (.4,.1) arc(-45:315:0.2);
\draw[to-,thin,fill=black!10!white] (-.4,-.1) arc(135:-225:0.2);
       \node at (0.7,0) {$\catlabel{\lambda}$};
\end{tikzpicture}
\!+
\begin{tikzpicture}[anchorbase,scale=.8]
	\draw[-to,thin] (0.6,-.6) to (-0.6,.6);
	\draw[to-,thin] (-0.6,-.6) to (0.6,.6);
       \node at (0.7,0) {$\catlabel{\lambda}$};
\end{tikzpicture}
\!\!+\begin{tikzpicture}[anchorbase,scale=.8]
	\draw[-to,thin] (-0.7,-.6) to (-0.7,.6);
	\draw[to-,thin] (.1,-.6) to (.1,.6);
\limitteleporterOO{(-.7,0)}{}{}{(.1,0)};
            \node at (.4,0) {$\catlabel{\lambda}$};
\end{tikzpicture}
\!-\begin{tikzpicture}[anchorbase,scale=.8]
 	\draw[-to,thin] (-0.45,-.6) to[out=90,in=180] (-.1,-.1) to[out=0,in=90] (0.25,-.6);
 	\draw[to-,thin] (-0.45,.6) to[out=-90,in=180] (-.1,.1) to[out=0,in=-90] (0.25,.6);
     \bentlimitteleporterOO{(.13,.2)}{}{}{(.13,-.2)}{(.72,.1) and (.72,-.1)};
\draw[to-,thin,fill=black!10!white] (-.5,-.13) arc(150:-210:0.2);
   \node at (.8,0) {$\catlabel{\lambda}$};
\end{tikzpicture}
\!-
\begin{tikzpicture}[anchorbase,scale=.8]
	\draw[to-,thin] (-0.7,-.6) to (-0.7,.6);
	\draw[-to,thin] (.1,-.6) to (.1,.6);
\limitteleporterOO{(-.7,0)}{}{}{(.1,0)};
            \node at (.4,0) {$\catlabel{\lambda}$};
\end{tikzpicture}
+\!\!\begin{tikzpicture}[anchorbase,scale=.8]
 	\draw[to-,thin] (-0.45,-.6) to[out=90,in=180] (-.1,-.1) to[out=0,in=90] (0.25,-.6);
 	\draw[-to,thin] (-0.45,.6) to[out=-90,in=180] (-.1,.1) to[out=0,in=-90] (0.25,.6);
     \bentlimitteleporterOO{(-.35,.2)}{}{}{(-.35,-.2)}{(-1,.1) and (-1,-.1)};
\draw[to-,thin,fill=black!10!white] (.34,0.15) arc(-20:340:0.2);
   \node at (.7,0) {$\catlabel{\lambda}$};
\end{tikzpicture}\:,\\
\widetilde{\Omega}_t\left(\:\,\txtcap\:\,\right)_\lambda &:=
\begin{tikzpicture}[anchorbase,scale=.8]
	\draw[-,thin] (-0.4,-0.3) to[out=90, in=180] (0,0.3);
	\draw[-to,thin] (-0,0.3) to[out = 0, in = 90] (0.4,-0.3);
\draw[to-,thin,fill=black!10!white] (-.45,0.2) arc(150:-210:0.2);
\node at (.7,0) {$\catlabel{\lambda}$};
\end{tikzpicture}+
\begin{tikzpicture}[anchorbase,scale=.8]
	\draw[to-,thin] (-0.4,-0.3) to[out=90, in=180] (0,0.3);
	\draw[-,thin] (-0,0.3) to[out = 0, in = 90] (0.4,-0.3);
\node at (.65,0) {$\catlabel{\lambda}$};
\end{tikzpicture}
,\\
\widetilde{\Omega}_t\left(\:\,\txtcup\,\:\right)_\lambda &:=\begin{tikzpicture}[anchorbase,scale=.8] 
	\draw[-,thin] (-0.4,0.3) to[out=-90, in=180] (0,-0.3);
	\draw[-to,thin] (-0,-0.3) to[out = 0, in = -90] (0.4,0.3);
\draw[to-,thin,fill=black!10!white] (.42,-.2) arc(-20:340:0.2);
      \node at (.85,0) {$\catlabel{\lambda}$};
\end{tikzpicture}+
\begin{tikzpicture}[anchorbase,scale=.8]
	\draw[to-,thin] (-0.4,0.3) to[out=-90, in=180] (0,-0.3);
	\draw[-,thin] (-0,-0.3) to[out = 0, in = -90] (0.4,0.3);
\node at (.65,0) {$\catlabel{\lambda}$};
\end{tikzpicture}
\end{align*}
for $\lambda \in t+2\Z$.
\end{remark}

\begin{remark}\label{grrem}
Recall from \cref{NBgrading,KMgrading} that both $\cNB_t$ and $\fU(\sl_2)$, hence, $\cU(\sl_2;t)$ 
can be equipped with {gradings}.
However, the functor $\Omega_t$ in \cref{psit} is {\em not} a graded monoidal functor---it does not preserve degrees of the generating morphisms. One way to fix this is to pass to
the $q$-envelope $\fU_q(\sl_2)$
of $\fU(\sl_2)$ defined as in \cite[Sec.~6]{BE}
(ignoring the more complicated
$\Z/2$-gradings present there).
This has $1$-morphisms 
that are formal symbols 
$q^n X 1_\lambda$ for 
1-morphisms $X 1_\lambda$ in $\fU(\sl_2)$ and $n \in \Z$,
and the 2-morphism space
$\Hom_{\fU_q(\sl_2)}(q^m X 1_\lambda, q^n Y 1_\mu)$ is $q^{n-m} \Hom_{\fU(\sl_2)}(X,Y)$,
where the $q$ here is the upward\footnote{If one prefers $q$ to be the downward grading shift functor
then one can instead use the $q^{-1}$-envelope
$\fU_{q^{-1}}(\sl_2)$ in this place and
obtain a graded monoidal functor $\tilde\Omega_t$
taking $B$ to $F \oplus \hat E$ and defined 
on morphisms as in \cref{morepsit}.}
 grading shift functor on the category of graded $\kk$-modules $(q V)_d := V_{d+1}$.
Horizontal and vertical composition making
$\fU_q(\sl_2)$ into a
strict graded 2-category are induced in an obvious way by the ones in $\fU(\sl_2)$.
Then we modify \cref{hearts}, redefining $\cU(\sl_2;t)_\loc$ 
to be the strict graded 
monoidal category with objects that are words in the free monoid $\langle \hat E, F \rangle$.
For any $X \in \langle \hat E, F \rangle$ and $\lambda \in \Z$, the corresponding 1-morphism $X 1_\lambda$ in $\fU_q(\sl_2)$
is defined so that
$\hat E 1_\lambda := q^{-\lambda-1} E 1_\lambda$, and then
\begin{equation*}
\Hom_{\cU(\sl_2;t)_\loc}(Y, X):=
\prod_{\lambda \in t+2\Z} 
\Hom_{\fU_{q}(\sl_2)_{\loc}}(Y 1_\lambda, X 1_\lambda)
\end{equation*}
for $X, Y \in \langle \hat E, F\rangle$.
The graded analog of $\Omega_t$
can now be defined to be the graded monoidal functor given on objects by
$B \mapsto F \oplus \hat E$, and on morphisms as in \cref{psit}; one just needs to check that this does indeed respect degrees.
At the decategorified level, 
this modified definition of $\Omega_t$
is consistent with the standard choice 
of the embedding $\U_q^\imath(\sl_2)
\hookrightarrow \U_q(\sl_2),
B \mapsto F + q K^{-1} E$.
\end{remark}


\section{The basis theorem}\label{sec5}

Assume as before that $\kk$ is an integral domain in which 2 is invertible and $t \in \{0,1\}$. Fix also $m, n \geq 0$.
Any morphism $f:B^{\star n} \rightarrow B^{\star m}$
is represented by a linear combination of {\em $m \times n$ string diagrams}, i.e., string diagrams with $m$ boundary points at the top and $n$ boundary points at the bottom
that are obtained by composing the generating
morphisms from \cref{gens}. 
It follows that $\Hom_{\cNB_t}(B^{\star n}, B^{\star m})$ is 0 unless $m \equiv n \pmod{2}$.

The individual strings in 
an $m \times n$ string diagram $s$
are of four basic types:
generalized cups (with two boundary points on the top edge),
generalized caps (with two boundary points on the bottom edge),
propagating strings (with one boundary point at the top and one at the bottom),
and internal bubbles (no boundary points).
We define an equivalence relation $\sim$
on the set of $m \times n$ string diagrams
by declaring that $s \sim s'$ if
their strings define the same matching on the set of $m+n$ boundary points. 
We say that $s$ is
{\em reduced} if the following properties hold:
\begin{itemize}
\item 
There are no internal bubbles.
\item Propagating strings have no critical points (i.e., points of slope 0).
\item Generalized cups/caps (i.e., strings that connect top to top or bottom to bottom) each have exactly one critical point.
\item There are no {\em double crossings} (i.e., two different strings which cross each other at least twice).
\end{itemize}
These assumptions imply in particular that there are no {\em self-intersections} ($=$ crossings of a string with itself).
For example, the first of the following undotted diagrams is not reduced (indeed, it fails all of the above conditions), while the second is reduced in the same $\sim$-equivalence:
$$
\begin{tikzpicture}[anchorbase]
\draw (0,1) to [out=-90,in=180] (.45,-0.6);
\draw (.45,-.6) to [out=0,in=180] (1.4,0.6);
\draw (2,.3) to [out=0,in=-90] (2.4,1);
\draw (2,0.2) to [out=0,in=0] (2,-.2);
\draw (0,-1) to [out=80,in=190] (.7,.7);
\draw (0,-1) to [out=80,in=190] (.7,.7);
\draw (.7,.7) to [out=10,in=30] (0.8,-1);
\draw (1.8,-0.6) to [out=-30,in=90] (2.4,-1);
\draw (0.8,1) to [out=-120,in=150] (1.8,-0.6);
\draw (1.6,-1) to [out=90,in=-180] (2,0.2);
 \draw (2,-.2) to [out=180,in=-60] (1.6,1);
\draw (-.5,-.2) to [out=90,in=180] (-.1,.25); 
\draw (-.1,.25) to [out=0,in=90] (.5,-.2); 
\draw (.1,-.7) to [out=0,in=-90] (.5,-.2); 
\draw (-.5,-.2) to [out=-90,in=180] (.1,-.7); 
\draw (1.4,.6) to [out=0,in=180] (2,0.3);
\end{tikzpicture}
\qquad\sim\qquad
\begin{tikzpicture}[anchorbase,scale=1.95]
\draw (0.6,-0.5) to [out=70,in=-80] (.6,.5);
\draw (0,-0.5) to [out=90,in=180] (0.2,-0.2);
\draw (.2,-0.2) to [out=0,in=90] (0.4,-0.5);
\draw (0,0.5) to [out=-90,in=180] (0.5,0.2);
\draw (.5,0.2) to [out=0,in=-90] (1,0.5);
\draw (.3,0.5) to[out=-100,in=100] (.9,-.5);
\end{tikzpicture}\:.
$$
Now fix a set $\overline{\RSD}(m,n)$ of representatives for the
$\sim$-equivalence classes
of {\em undotted} reduced $m \times n$ string diagrams;
the total number of such diagrams is $(m+n-1)!!$ if $m \equiv n \pmod{2}$,
and there are none otherwise.
For each of these $\sim$-equivalence
class representatives, we also choose distinguished points in the interior of each of its strings that are away from points of intersection. Then
let $\RSD(m, n)$ be the
set of all morphisms
$f:B^{\star n} \rightarrow B^{\star m}$ obtained
by closed dots
labeled by some non-negative multiplicities
at the elements of $\overline{\RSD}(m,n)$.

Recall the commutative 
algebra $\Gamma$ of Schur $Q$-functions
and the algebra 
homomorphism $\gamma_t:\Gamma \rightarrow \End_{\cNB_t}(\one)$
from \cref{gammacor}.

\begin{theorem}\label{basisthm} 
Viewing
$\Hom_{\cNB_t}(B^{\star n}, B^{\star m})$
as a $\Gamma$-module so that
$p \in \Gamma$ acts on $f:B^{\star n}\rightarrow B^{\star m}$
by $f \cdot p := f \star \gamma_t(p)$,
the morphism space
$\Hom_{\cNB_t}(B^{\star n}, B^{\star m})$
is free as a $\Gamma$-module with basis $\RSD(m, n)$.
\end{theorem}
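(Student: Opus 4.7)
The proof will have two complementary parts: spanning and linear independence. The construction of $\Omega_t$ in the previous section is precisely what makes the linear independence argument possible, so the proof strategy mirrors the approach used for the basis theorem of the Kac-Moody 2-category in \cite{KL3} and the Heisenberg category in \cite{K0}.

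\textbf{Spanning step.} I would first prove that every morphism in $\Hom_{\cNB_t}(B^{\star n}, B^{\star m})$ lies in the $\Gamma$-span of $\RSD(m,n)$, via a diagrammatic straightening algorithm. Given an arbitrary diagram, the plan is to iteratively apply: the curl-removal consequences of \cref{rels3} and \cref{rels5} to eliminate self-intersections, trading each for dots plus new internal bubbles; the nilHecke-type relations from \cref{rels1} together with their dotted analogs \cref{rels4}, \cref{rels7} to reduce double crossings to diagrams with strictly fewer crossings; the zigzag/cup-cap interaction relations in \cref{rels2,rels3,rels5} to simplify isotopy-trivial configurations; and finally the dot-past-crossing and dot-past-cap/cup rules in \cref{rels4,rels7} to transport all dots to the chosen distinguished points. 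Each step strictly decreases some invariant (for instance, the lexicographic invariant given by (number of crossings, number of internal bubbles)), so the procedure terminates. The internal bubbles accumulated in this process can then be migrated to the right by the bubble-slide relation \cref{rels13}, producing morphisms in $\End_{\cNB_t}(\one)$ which lie in the image of $\gamma_t$ by \cref{gammacor}.

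\textbf{Linear independence step.} This is the crux of the theorem and the reason why \cref{psit} was proved. Suppose there is a relation $\sum_{f\in\RSD(m,n)} f\cdot p_f = 0$ with $p_f\in\Gamma$, not all zero. I would apply $\Omega_t$. Since $B=E\oplus F$, the image lies in the matrix space $\Hom_{\cU(\sl_2;t)_\loc}(B^{\star n},B^{\star m})$, whose entries are indexed by pairs $(\bi,\bj)$ with $\bi \in \{E,F\}^m$, $\bj \in \{E,F\}^n$ and by weights $\lambda\equiv t\pmod 2$. The idea is to choose $(\bi,\bj)$ and $\lambda$ so as to isolate a single reduced diagram $f$, then extract its coefficient as an element of $\End_{\fU(\sl_2)_\loc}(1_\lambda)$ and identify it with $\gamma_t(p_f)$ evaluated inside $\End_{\fU(\sl_2)_\loc}(1_\lambda) \cong \Lambda$ via \cref{magic}. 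Concretely, for $(\bi,\bj)$ orienting the endpoints so that exactly one diagram $f\in\overline{\RSD}(m,n)$ matches, the leading piece of $\Omega_t(f\cdot p_f)_{\bi,\bj,\lambda}$ (with respect to a filtration on $\fU(\sl_2)_\loc$ counting teleporters and negatively-labelled dots) is the oriented KLR diagram for $f$ with the same dots multiplied on the right by $\Omega_t(\gamma_t(p_f))$. By the non-degeneracy theorem for $\fU(\sl_2)$ proved in \cite{KL3}, these oriented reduced diagrams with dots form part of a basis freely generated over $\End_{\fU(\sl_2)}(1_\lambda)\cong\Lambda$, so the leading-term identity forces $\Omega_t(\gamma_t(p_f))$, hence $p_f$, to vanish for the chosen $f$. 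Varying $(\bi,\bj)$ recovers this for every $f\in\RSD(m,n)$.

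\textbf{Main obstacle.} The hard part is unquestionably the leading-term bookkeeping in the localized 2-category $\fU(\sl_2)_\loc$. In passing from $\fU(\sl_2)$ to $\fU(\sl_2)_\loc$ one adjoins teleporters \cref{teleporters} and inverse dots \cref{tak}, and the defining formulas for $\Omega_t$ on $\textcrossing$, $\txtcap$, $\txtcup$ all involve internal bubbles---which unfold into infinite power series in the negative dots and teleporters. Setting up a filtration on $\fU(\sl_2)_\loc$ that (i) is preserved by vertical and horizontal composition, (ii) recovers the ordinary KLR basis of $\fU(\sl_2)$ in top degree, and (iii) makes the image of each reduced diagram under $\Omega_t$ have a well-defined leading term that matches the underlying unoriented diagram $f$, is the technical heart of the argument. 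One must also verify that distinct $f,f'\in\overline{\RSD}(m,n)$ give rise to genuinely distinct leading terms (no accidental coincidences coming from the symmetry $\tR$ or from internal-bubble corrections), so that the KLR basis indeed separates them. Once this filtration and leading-term calculus are in place, the rest of the argument is a finite bookkeeping exercise combining \cref{psit}, \cref{magic}, and the fact that $\gamma_t$ factors through an embedding $\Gamma\hookrightarrow\Lambda$.
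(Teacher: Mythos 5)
Your overall strategy matches the paper's: a diagrammatic straightening algorithm for spanning, followed by applying $\Omega_t$ and playing the image off against the non-degeneracy (KLR basis) theorem for $\fU(\sl_2)$, inducting on the number of crossings. The spanning half is essentially correct, modulo one inaccuracy: in $\cNB_t$ a curl is literally zero by \cref{rels3,rels5,rels6} (it is not traded for dots plus internal bubbles as in $\fU(\sl_2)$), which only makes that step easier. A second small slip: a single orientation word $Y$ does not isolate a single matching (nested and crossing caps on the same endpoints produce the same word), so one must argue with the whole $\sim$-equivalence class of a maximal-crossing diagram rather than a single $f$; this is repairable since the oriented diagrams in that class remain distinct basis elements.

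The genuine gap is in the linear independence step, and it is exactly the point you flag as the ``main obstacle'' without resolving it: your plan is to extract leading terms inside the localized 2-category $\fU(\sl_2)_\loc$, but no basis or non-degeneracy theorem is available there, and the non-degeneracy of $\fU(\sl_2)$ does not automatically survive localization (inverting the 2-morphisms \cref{bands} could a priori collapse morphism spaces). A filtration by ``number of teleporters and negative dots'' does not by itself give you a faithful leading-term calculus unless you already know the associated graded is free over $\Lambda$ on the expected diagrams, which is precisely what is missing. The paper's resolving idea is to bypass the localization entirely: for $a\in\kk^\times$ the dot-translation 2-functor $\eta_a$ of \cref{etaa} sends the 2-morphisms being inverted to elements that are already invertible in the grading \emph{completion} $\fUc(\sl_2)$ (an open dot becomes $\textupdot+a$, invertible as a power series), so $\iota\circ\eta_a$ extends to a 2-functor $\overline{\iota\circ\eta}_a:\fU(\sl_2)_\loc\rightarrow\fUc(\sl_2)$, and \cref{nondegeneracy} provides a \emph{topological} $\widehat\Lambda$-basis of the completion against which the leading-term argument can be run rigorously. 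Without this (or some substitute proof that $\fU(\sl_2)_\loc$ is non-degenerate), your argument does not close.
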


We split the proof into two parts---spanning and linear independence.

\begin{proof}[Proof of spanning part of \cref{basisthm}]
Consider a morphism
$f \in \Hom_{\cNB_t}(B^{\star n}, B^{\star m})$ 
represented by an $m\times n$ string diagram in which there is either a self-intersection or a double crossing.
We claim that $f$ can be rewritten as a linear combination of string diagrams with no self-intersections or double crossings,
all of which have strictly fewer crossings than the original diagram. To see this, we can essentially ignore closed dots since,
up to a sign,
they can be moved freely along strings 
by the relations \cref{rels4,rels7} 
modulo a linear combination of terms with strictly fewer crossings.
If the diagram involves a curl
$\:\begin{tikzpicture}[anchorbase,scale=.8]
	\draw[-] (0,-.2) to [out=60,in=180] (.4,.3) to [out=0,in=90] (.6,.1) to[out=-90,in=0] (.4,-.1) to [out=180,in=-60] (0,.4);
\end{tikzpicture}\:$
then the morphism is 0 by the defining relations. If there are no curls, the presence of some self-intersection or double crossing implies that 
the diagram can be transformed using the braid relation and planar isotopy
into a diagram containing a bigon
$\:\begin{tikzpicture}[anchorbase,scale=1]
 	\draw[-] (-0.25,-.3) to[out=90,in=180] (0,.1) to[out=0,in=90] (0.25,-.3);
 	\draw[-] (-0.25,.3) to[out=-90,in=180] (0,-.1) to[out=0,in=-90] (0.25,.3);
\end{tikzpicture}\:$, and the resulting morphism is 0 again by the defining relations. 
This last assertion is justified in the proof of
\cite[Th.~8.3]{Lauda} by adapting an argument from \cite[Lem.~2]{Carpentier}, which establishes 
the analogous result for closed 4-valent planar graphs (viewing a crossing in our setup as a 4-valent vertex).

Applying the claim and induction on the number of crossings, 
we are reduced to considering a morphism $f$ represented by an $m\times n$ string diagram with no self-intersections or double crossings.
Using planar isotopy, we can 
assume moreover that
there are no cups or caps on propagating strings, 
and at most one cup or cap on all other strings.
In particular, all of the internal bubbles are simple circles decorated by some number of dots. Using the bubble slide relation \cref{rels13}, these
internal bubbles can then be moved so that they all appear on the
right-hand edge of the diagram and there are no nested bubbles.
Thus, we have produced a $\Gamma$-linear combination of morphisms defined by reduced string diagrams. 
In any reduced string diagram, 
closed dots can be moved to some distinguished point on each string modulo diagrams with fewer crossings. It remains to observe that 
any two undotted string diagrams of the same type are equivalent, i.e., they define the same morphism. This follows using the braid relation and planar isotopy once again.
Hence, we obtain the desired $\Gamma$-linear combination of diagrams
in $\RSD(m, n)$.
\end{proof}

The proof of linear independence
needs 
some additional input about the structure of
the 2-category
$\fU(\sl_2)$. Specifically, we need bases for its 2-morphism spaces. In fact, we are going to work with a completion 
$\fUc(\sl_2)$ of
$\fU(\sl_2)$.  
Recall from \cref{KMgrading} 
that $\fU(\sl_2)$ is naturally 
a strict graded 2-category.
For 1-morphisms $X 1_\lambda, Y1_\lambda :\lambda\rightarrow \mu$ in $\fU(\sl_2)$ (so $X$ and $Y$ are
words in $\langle E, F\rangle$ of weight $\mu-\lambda$),
the graded
2-morphism space 
\begin{equation}
\Hom_{\fU(\sl_2)}(Y1_\lambda,X1_\lambda)
=\bigoplus_{d \in \Z} \Hom_{\fU(\sl_2)}(Y1_\lambda,X1_\lambda)_d
\end{equation}
has finite-dimensional graded pieces and $\Hom_{\fU(\sl_2)}(Y1_\lambda,X1_\lambda)_d = 0$ for $d \ll 0$.
Consequently, it makes sense to pass to the completion with respect to the grading. This is a strict 2-category
with the same objects and 1-morphisms as $\fU(\sl_2)$,
and 2-morphisms that are defined from
\begin{equation}
\Hom_{\fUc(\sl_2)}(Y1_\lambda,X1_\lambda) := \prod_{d \in \Z} \Hom_{\fU(\sl_2)}(Y1_\lambda,X1_\lambda)_d
\end{equation}
with horizontal and vertical composition laws induced by the ones in $\fU(\sl_2)$.

The {\em non-degeneracy} of $\fU(\sl_2)$ 
gives bases for the 2-morphism spaces $\Hom_{\fU(\sl_2)}(Y1_\lambda,X1_\lambda)$
of a similar nature to the bases in \cref{basisthm}. 
The result can be deduced from 
\cite{Lauda,Lauda2}, but it was not formulated explicitly
until \cite[Th.~1.3]{KL3} (which also extended the result from $\sl_2$ to $\sl_n$).
To state the result in a way that is convenient for the present purposes,
take 1-morphisms $X1_\lambda,Y 1_\lambda:\lambda\rightarrow \mu$ in $\fU(\sl_2)$ represented by words $X,Y \in \langle E,F\rangle$ of lengths $m$ and $n$, respectively.
Take $s \in \RSD(n,m)$.
By writing the letters in the words $X$ and $Y$ 
at the ends of the strings at the top and bottom of the diagram $s$, respectively,
$s$ determines a matching between the letters of $X$ and $Y$. 
We say that $s$ is {\em admissible} for $X$ and $Y$ if
the letters matched by each propagating string are equal 
and the letters matched by each generalized cup or cap are different.
In that case, 
there is a corresponding 2-morphism $Y 1_\lambda \rightarrow X 1_\lambda$
in $\fU(\sl_2)$ represented by the string diagram $\vec{s}\;{\scriptstyle\color{catcolor}\lambda}$ obtained from $s$ by replacing the thick strings by thin strings oriented in the way dictated by the letters in $X$ and $Y$ ($E$ indicates upward and $F$ downward), replacing all closed dots with open dots with the same multiplicities, and
labeling the rightmost region by $\lambda$. 
Let $\overrightarrow{\RSD}(Y1_\lambda,X1_\lambda)$ be the set of oriented string diagrams arising in this way from the diagrams in $\RSD(n,m)$
that are admissible for $X$ and $Y$.

As mentioned already, 
the graded algebra $\End_{\fU(\sl_2)}(1_\lambda)$ can be identified with the algebra 
$\Lambda$ of symmetric functions, viewed as a graded algebra  so that $e_r$ and $h_r$ are in degree $2r$,
so that
${\catlabel{\lambda}\:}
\textclockwisebubble(u) = u^{-\lambda} e(-u)$ and
${\catlabel{\lambda}\:}
\textanticlockwisebubble(u) = u^\lambda h(u)$; this assertion is part of the non-degeneracy theorem that we are describing.  
Hence, each 2-morphism
space $\Hom_{\fU(\sl_2)}(Y1_\lambda,X1_\lambda)$ is naturally a graded $\Lambda$-module, 
$p \in \Lambda$ acting on $f$ by $f \cdot p := f \star p$.
The full theorem asserts moreover that each
$\Hom_{\fU(\sl_2)}(Y1_\lambda,X1_\lambda)$ is free as a graded $\Lambda$-module with homogeneous basis given by the set $\overrightarrow{\RSD}(Y1_\lambda,X1_\lambda)$.
The following is an immediate consequence.

\begin{theorem}
\label{nondegeneracy}
For $\lambda \in \Z$, the endomorphism algebra
$\End_{\fUc(\sl_2)}(1_\lambda)$ is identified with
the grading completion $\widehat{\Lambda}$ of the algebra of symmetric functions. Hence, for any 1-morphisms $X1_\lambda, Y1_\lambda:\lambda \rightarrow \mu$,
the 2-morphism space
$\Hom_{\fUc(\sl_2)}(Y1_\lambda,X1_\lambda)$ is naturally a topological $\widehat{\Lambda}$-module.
In fact, it is
free as a topological
$\widehat{\Lambda}$-module with topological basis
$\overrightarrow{\RSD}(Y1_\lambda,X1_\lambda)$.
\end{theorem}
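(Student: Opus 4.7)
The plan is to deduce the theorem as a direct consequence of the non-degeneracy theorem for $\fU(\sl_2)$ from \cite{KL3}, recalled in the paragraph just before the statement, by passing to the grading completion throughout. No new diagrammatic computations are required; the content is the bookkeeping that makes the completion well behaved.

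First I would record the basic degree-finiteness of $\Hom_{\fU(\sl_2)}(Y1_\lambda, X1_\lambda)$ that is already implicit in the introductory discussion of $\fUc(\sl_2)$. Reading off the degrees of the generators in \cref{KMgens,KMgens2,KMsideways}, each oriented diagram in $\overrightarrow{\RSD}(Y1_\lambda, X1_\lambda)$ has degree equal to a fixed combinatorial invariant of its underlying undotted matching (depending only on $X$, $Y$ and $\lambda$) plus twice the total number of closed dots. Since there are only finitely many admissible undotted matchings for a given $X$ and $Y$, it follows that only finitely many elements of $\overrightarrow{\RSD}(Y1_\lambda, X1_\lambda)$ lie in each degree, and that the degrees are bounded below. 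By the non-degeneracy theorem, the same finiteness transfers to each graded component $\Hom_{\fU(\sl_2)}(Y1_\lambda, X1_\lambda)_d$.

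Second, the isomorphism $\End_{\fUc(\sl_2)}(1_\lambda) \cong \widehat{\Lambda}$ is now immediate from the graded-algebra part $\End_{\fU(\sl_2)}(1_\lambda) \cong \Lambda$ of the non-degeneracy theorem by passing to the grading completion on both sides. The bubble-slide relations \cref{bubbleslide1} extend termwise to the completion, so horizontal composition with an endomorphism of $1_\lambda$ makes sense on each completed Hom space: for $f \in \Hom_{\fUc(\sl_2)}(Y1_\lambda, X1_\lambda)$ and $p \in \widehat{\Lambda}$, the degree-$d$ component of $f \star p$ is the finite sum $\sum_{a+b=d} f_a \star p_b$, finite because $\Lambda_b = 0$ for $b < 0$ and degrees in $\Hom$ are bounded below. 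This realizes $\Hom_{\fUc(\sl_2)}(Y1_\lambda, X1_\lambda)$ as a topological $\widehat{\Lambda}$-module.

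Third, for the topological basis, the non-degeneracy theorem supplies a graded $\Lambda$-module isomorphism $\bigoplus_{s \in \overrightarrow{\RSD}(Y1_\lambda, X1_\lambda)} \Lambda \cdot s \xrightarrow{\sim} \Hom_{\fU(\sl_2)}(Y1_\lambda, X1_\lambda)$. Applying grading completion to both sides, the right-hand side becomes $\Hom_{\fUc(\sl_2)}(Y1_\lambda, X1_\lambda)$ by definition of $\fUc(\sl_2)$, while the finiteness of basis elements in each degree from the first step identifies the completion of the left-hand side with the set of formal series $\sum_s p_s \cdot s$, $p_s \in \widehat{\Lambda}$, whose contributions to each graded piece are finite. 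This is precisely the topological free $\widehat{\Lambda}$-module with topological basis $\overrightarrow{\RSD}(Y1_\lambda, X1_\lambda)$. The only real obstacle is the degree-finiteness verification in the first step; granted that, everything else is the standard formal passage between a graded structure and its grading completion.
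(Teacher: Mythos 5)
Your proposal is correct and matches the paper's approach: the paper gives no written proof at all, simply declaring the theorem ``an immediate consequence'' of the non-degeneracy theorem of Khovanov--Lauda recalled in the preceding paragraph, together with the degree-finiteness of the graded Hom spaces already noted when $\fUc(\sl_2)$ was introduced. Your write-up just makes explicit the routine completion bookkeeping that the authors leave implicit.
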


We need one more basic lemma.

\begin{lemma}\label{etaa}
For any $a \in \kk$, there is a strict 2-functor
$\eta_a:\fU(\sl_2) \rightarrow \fU(\sl_2)$ which fixes objects and 1-morphisms, and is defined on generating 2-morphisms by
\begin{align*}
\begin{tikzpicture}[anchorbase,scale=1.4]
	\draw[-to,thin] (0.08,-.15) to (0.08,.3);
      \opendot{0.08,0.07};
      \node at (0.25,0.05) {$\catlabel{\lambda}$};
\end{tikzpicture}
&\mapsto
\begin{tikzpicture}[anchorbase,scale=1.4]
	\draw[-to,thin] (0.08,-.15) to (0.08,.3);
      \opendot{0.08,0.07};
      \node at (0.25,0.05) {$\catlabel{\lambda}$};
\end{tikzpicture}
+a\:\,
\begin{tikzpicture}[anchorbase,scale=1.4]
	\draw[-to,thin] (0.08,-.15) to (0.08,.3);
      \node at (0.2,0.05) {$\catlabel{\lambda}$};
\end{tikzpicture}
,&
\begin{tikzpicture}[anchorbase,scale=1.4]
	\draw[to-,thin] (0.3,-0.1) to[out=90, in=0] (0.1,0.2);
	\draw[-,thin] (0.1,0.2) to[out = 180, in = 90] (-0.1,-0.1);
      \node at (0.5,0.05) {$\catlabel{\lambda}$};
\end{tikzpicture}
&\mapsto
\begin{tikzpicture}[anchorbase,scale=1.4]
	\draw[to-,thin] (0.3,-0.1) to[out=90, in=0] (0.1,0.2);
	\draw[-,thin] (0.1,0.2) to[out = 180, in = 90] (-0.1,-0.1);
      \node at (0.5,0.05) {$\catlabel{\lambda}$};
\end{tikzpicture},
&\begin{tikzpicture}[anchorbase,scale=1.4]
	\draw[to-,thin] (0.3,0.2) to[out=-90, in=0] (0.1,-0.1);
	\draw[-,thin] (0.1,-0.1) to[out = 180, in = -90] (-0.1,0.2);
      \node at (0.5,0.05) {$\catlabel{\lambda}$};
\end{tikzpicture}&
\mapsto
\begin{tikzpicture}[anchorbase,scale=1.4]
	\draw[to-,thin] (0.3,0.2) to[out=-90, in=0] (0.1,-0.1);
	\draw[-,thin] (0.1,-0.1) to[out = 180, in = -90] (-0.1,0.2);
      \node at (0.5,0.05) {$\catlabel{\lambda}$};
\end{tikzpicture},\\
\begin{tikzpicture}[anchorbase,scale=1.4]
	\draw[-to,thin] (0.18,-.15) to (-0.18,.3);
	\draw[-to,thin] (-0.18,-.15) to (0.18,.3);
      \node at (0.3,0.1) {$\catlabel{\lambda}$};
\end{tikzpicture}
&\mapsto
\begin{tikzpicture}[anchorbase,scale=1.4]
	\draw[-to,thin] (0.18,-.15) to (-0.18,.3);
	\draw[-to,thin] (-0.18,-.15) to (0.18,.3);
      \node at (0.3,0.1) {$\catlabel{\lambda}$};
\end{tikzpicture}
,&
\begin{tikzpicture}[anchorbase,scale=1.4]
	\draw[-,thin] (0.3,-0.1) to[out=90, in=0] (0.1,0.2);
	\draw[-to,thin] (0.1,0.2) to[out = 180, in = 90] (-0.1,-0.1);
      \node at (0.5,0.05) {$\catlabel{\lambda}$};
\end{tikzpicture}
&\mapsto
\begin{tikzpicture}[anchorbase,scale=1.4]
	\draw[-,thin] (0.3,-0.1) to[out=90, in=0] (0.1,0.2);
	\draw[-to,thin] (0.1,0.2) to[out = 180, in = 90] (-0.1,-0.1);
      \node at (0.5,0.05) {$\catlabel{\lambda}$};
\end{tikzpicture},&
\begin{tikzpicture}[anchorbase,scale=1.4]
	\draw[-,thin] (0.3,0.2) to[out=-90, in=0] (0.1,-0.1);
	\draw[-to,thin] (0.1,-0.1) to[out = 180, in = -90] (-0.1,0.2);
      \node at (0.5,0.1) {$\catlabel{\lambda}$};
\end{tikzpicture}
&\mapsto
\begin{tikzpicture}[anchorbase,scale=1.4]
	\draw[-,thin] (0.3,0.2) to[out=-90, in=0] (0.1,-0.1);
	\draw[-to,thin] (0.1,-0.1) to[out = 180, in = -90] (-0.1,0.2);
      \node at (0.5,0.1) {$\catlabel{\lambda}$};
\end{tikzpicture}.
\end{align*}
It also maps
\begin{align}\label{coffee1}
\begin{tikzpicture}[anchorbase]
  \draw[to-,thin] (0,0.4) to[out=180,in=90] (-.2,0.2);
  \draw[-,thin] (0.2,0.2) to[out=90,in=0] (0,.4);
 \draw[-,thin] (-.2,0.2) to[out=-90,in=180] (0,0);
  \draw[-,thin] (0,0) to[out=0,in=-90] (0.2,0.2);
     \node at (0.36,0.2) {$\catlabel{\lambda}$};
\opendot{-0.2,.2};
\node at (-.4,0.2) {$\kmlabel{n}$};
\end{tikzpicture}
&\mapsto
\sum_{r=0}^n
\binom{n}{r}
a^r
\begin{tikzpicture}[anchorbase]
  \draw[to-,thin] (0,0.4) to[out=180,in=90] (-.2,0.2);
  \draw[-,thin] (0.2,0.2) to[out=90,in=0] (0,.4);
 \draw[-,thin] (-.2,0.2) to[out=-90,in=180] (0,0);
  \draw[-,thin] (0,0) to[out=0,in=-90] (0.2,0.2);
     \node at (0.36,0.2) {$\catlabel{\lambda}$};
\opendot{-0.2,.2};
\node at (-.4,0.2) {$\kmlabel{r}$};
\end{tikzpicture}
,&
\begin{tikzpicture}[anchorbase]
  \draw[to-,thin] (0,0.4) to[out=180,in=90] (-.2,0.2);
  \draw[-,thin] (0.2,0.2) to[out=90,in=0] (0,.4);
 \draw[-,thin] (-.2,0.2) to[out=-90,in=180] (0,0);
  \draw[-,thin] (0,0) to[out=0,in=-90] (0.2,0.2);
      \node at (0,0.2) {$\kmlabel{n}$};
   \node at (.38,0.2) {$\catlabel{\lambda}$};
\end{tikzpicture}
&\mapsto
\sum_{r=0}^n \binom{-\lambda-r}{n-r}(-a)^{n-r}
\;\begin{tikzpicture}[anchorbase]
  \draw[to-,thin] (0,0.4) to[out=180,in=90] (-.2,0.2);
  \draw[-,thin] (0.2,0.2) to[out=90,in=0] (0,.4);
 \draw[-,thin] (-.2,0.2) to[out=-90,in=180] (0,0);
  \draw[-,thin] (0,0) to[out=0,in=-90] (0.2,0.2);
      \node at (0,0.2) {$\kmlabel{r}$};
   \node at (.38,0.2) {$\catlabel{\lambda}$};
\end{tikzpicture}
,\\
\begin{tikzpicture}[anchorbase]
  \draw[-to,thin] (0.2,0.2) to[out=90,in=0] (0,.4);
  \draw[-,thin] (0,0.4) to[out=180,in=90] (-.2,0.2);
\draw[-,thin] (-.2,0.2) to[out=-90,in=180] (0,0);
  \draw[-,thin] (0,0) to[out=0,in=-90] (0.2,0.2);
\opendot{0.2,.2};
\node at (.4,0.2) {$\kmlabel{n}$};
       \node at (-0.4,0.2) {$\catlabel{\lambda}$};
\end{tikzpicture}
&\mapsto
\sum_{r=0}^n
\binom{n}{r}
a^r
\begin{tikzpicture}[anchorbase]
  \draw[-to,thin] (0.2,0.2) to[out=90,in=0] (0,.4);
  \draw[-,thin] (0,0.4) to[out=180,in=90] (-.2,0.2);
\draw[-,thin] (-.2,0.2) to[out=-90,in=180] (0,0);
  \draw[-,thin] (0,0) to[out=0,in=-90] (0.2,0.2);
\opendot{0.2,.2};
\node at (.4,0.2) {$\kmlabel{r}$};
       \node at (-0.4,0.2) {$\catlabel{\lambda}$};
\end{tikzpicture}
,&
\begin{tikzpicture}[anchorbase]
  \draw[-to,thin] (0.2,0.2) to[out=90,in=0] (0,.4);
  \draw[-,thin] (0,0.4) to[out=180,in=90] (-.2,0.2);
\draw[-,thin] (-.2,0.2) to[out=-90,in=180] (0,0);
  \draw[-,thin] (0,0) to[out=0,in=-90] (0.2,0.2);
     \node at (0,0.2) {$\kmlabel{n}$};
   \node at (-.38,0.2) {$\catlabel{\lambda}$};
\end{tikzpicture}
&\mapsto
\sum_{r=0}^n \binom{\lambda-r}{n-r}(-a)^{n-r}
\begin{tikzpicture}[anchorbase]
  \draw[-to,thin] (0.2,0.2) to[out=90,in=0] (0,.4);
  \draw[-,thin] (0,0.4) to[out=180,in=90] (-.2,0.2);
\draw[-,thin] (-.2,0.2) to[out=-90,in=180] (0,0);
  \draw[-,thin] (0,0) to[out=0,in=-90] (0.2,0.2);
     \node at (0,0.2) {$\kmlabel{r}$};
   \node at (-.38,0.2) {$\catlabel{\lambda}$};
\end{tikzpicture}\label{coffee2}
\end{align}
for $n \geq 0$.
\end{lemma}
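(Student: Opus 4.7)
The natural approach is to use Rouquier's efficient presentation of $\fU(\sl_2)$ recalled just after \cref{KMrels5}, which has only the upward dot, upward crossing, rightward cup, and rightward cap as generators, subject to the three relations from \cref{KMrels1}, the zigzag identities from \cref{KMrels2}, and the invertibility of the matrices \cref{inversionrel}--\cref{day3}. Since $\eta_a$ is to fix the crossing, cup, and cap while replacing the upward dot by $\textupdot + a \cdot \mathrm{id}$, the first two relations in \cref{KMrels1} are preserved automatically (they involve no dots), and the third (nil-Hecke dot-slide) relation is preserved because both sides are a difference of two dots placed on opposite strands of a crossing, so the constant correction terms of the form $a\cdot\id$ produced on the two sides match and cancel. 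The zigzag identities \cref{KMrels2} involve no dots and are preserved trivially. This reduces well-definedness of $\eta_a$ to verifying invertibility of the image of the inversion relation.

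For this, one observes that applying $\eta_a$ entrywise to the matrix in \cref{inversionrel} only modifies the entries of the form ``rightward cup composed with $n$ open dots'' via $\opendot^n \mapsto (\opendot + a)^n = \sum_{r=0}^n \binom{n}{r} a^{n-r} \opendot^r$. This is precisely right-multiplication by a unitriangular $\kk$-matrix, so invertibility is preserved; the argument for \cref{day3} is dual. This establishes existence of the strict 2-functor $\eta_a$.

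With $\eta_a$ in hand, the action on the downward dot follows from the adjunction: sandwiching $\eta_a(\textupdot)$ between the rightward cap and cup and applying \cref{KMrels2} yields $\textdowndot \mapsto \textdowndot + a \cdot \mathrm{id}$, preserving the duality symmetry. For the dotted bubble formulas (first in each of \cref{coffee1} and \cref{coffee2}), the identity is just the binomial expansion of $(\opendot + a)^n$ sandwiched inside the bubble. For the fake bubble formulas (second in each), the cleanest route is to pass to the generating function form via \cref{southwalespolice1}--\cref{southwalespolice2} and \cref{infgrass}: since $\eta_a$ sends the pin $\textpinO{}{-}{}{u}$ to $\textpinO{}{-}{}{u-a}$ (because $(u - \opendot)^{-1}$ is sent to $((u-a) - \opendot)^{-1}$), the total bubble generating functions transform as
\begin{equation*}
\catlabel{\lambda}\:\textclockwisebubble(u) \mapsto \catlabel{\lambda}\:\textclockwisebubble(u-a), \qquad \catlabel{\lambda}\:\textanticlockwisebubble(u) \mapsto \catlabel{\lambda}\:\textanticlockwisebubble(u-a),
\end{equation*}
which is internally consistent with \cref{infgrass}. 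Extracting the coefficient of $u^{\lambda-n}$ (respectively $u^{-\lambda-n}$) from the shifted anticlockwise (respectively clockwise) generating function and applying the standard binomial identity $(u-a)^{N} = \sum_k \binom{N}{k}(-a)^k u^{N-k}$ then yields the claimed factors $\binom{\lambda-r}{n-r}(-a)^{n-r}$ and $\binom{-\lambda-r}{n-r}(-a)^{n-r}$ in \cref{coffee1}--\cref{coffee2}.

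The principal obstacle is bookkeeping: verifying that the leading sign $(-1)^n$ in the determinantal definitions \cref{KMbubbles1}--\cref{KMbubbles2} propagates correctly through the shift $u \mapsto u-a$ of the generating function, and that the resulting binomial coefficients exactly recover the indexing used in \cref{coffee1}--\cref{coffee2}. Once this translation is carried out carefully, the remaining steps are routine.
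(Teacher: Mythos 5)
Your route to existence is genuinely different from the paper's. The paper stays with Lauda's presentation, defines $\eta_a$ on all six generators of \cref{KMgens}--\cref{KMgens2} simultaneously, and observes that every defining relation is immediate except \cref{KMrels3}; that one is checked in the generating-function form \cref{KMrels3equiv}, using that $\eta_a$ turns the pin $(u-x)^{-1}$ into $(u-a-x)^{-1}$ together with the translation-invariance of the residue, $[f(u-a)]_{u^{-1}}=[f(u)]_{u^{-1}}$ for $f(u)\in\kk\lround u^{-1}\rround$. Your Rouquier-presentation alternative is correct as far as it goes: the dot-slide relation in \cref{KMrels1} survives because the $a$-corrections cancel, and the images of \cref{inversionrel} and \cref{day3} are the original matrices times unitriangular matrices, so invertibility is preserved. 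Your derivation of \cref{coffee1}--\cref{coffee2} from the shift $u\mapsto u-a$ of the bubble generating functions and \cref{infgrass} is essentially the paper's argument (and, note, it bypasses the determinantal definitions entirely, so the sign bookkeeping you flag as the main obstacle never actually arises).

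The gap is this: the lemma asserts that $\eta_a$ fixes \emph{both} cups and \emph{both} caps, but only the cap $EF1_\lambda\Rightarrow 1_\lambda$ and cup $1_\lambda\Rightarrow FE1_\lambda$ of \cref{KMgens} are Rouquier generators. The cup $1_\lambda\Rightarrow EF1_\lambda$ and cap $FE1_\lambda\Rightarrow 1_\lambda$ of \cref{KMgens2} are, in Rouquier's presentation, recovered as entries of the two-sided inverses of \cref{inversionrel} and \cref{day3} (this is the content of the equivalence from \cite{Brundan} quoted in the paper), so their behaviour under $\eta_a$ must be computed, not declared. Since $\eta_a$ multiplies the inversion matrix on the right by your unitriangular matrix, its inverse gets multiplied on the left by the inverse unitriangular matrix, and one must check that the entry recovering the undotted leftward cap (the $(0+2)$th entry of the inverse of \cref{inversionrel}) is unchanged. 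It is --- that row of the inverse unitriangular matrix contributes only its diagonal $1$ --- but this is a step, not a triviality, and you cannot postpone it: a dotted clockwise (resp.\ counterclockwise) bubble is a composite of the non-Rouquier cup (resp.\ cap) with a Rouquier one, so your bubble computation, and likewise your adjunction argument for the downward dot, already presupposes the very fact that is missing. Once this verification is supplied your proof closes up; the paper's choice of Lauda's presentation is precisely what lets it avoid the issue, at the cost of the single nontrivial relation check \cref{KMrels3}.
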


\begin{proof}
The existence of $\eta_a$ follows by checking relations. All of these are clear except for \cref{KMrels3}, and this is easy enough to see if one works with the equivalent form \cref{KMrels3equiv}.
In more detail, we note that
$$
\eta_a\left(\begin{tikzpicture}[anchorbase]
	\draw[-to,thin] (0,-.4) to (0,.4);
    \pinO{(0,0)}{-}{(.8,0)}{u};
   \node at (1.15,0) {$\catlabel{\lambda}$};
\end{tikzpicture}\right)=
\begin{tikzpicture}[anchorbase]
	\draw[-to,thin] (0,-.4) to (0,.4);
    \pinO{(0,0)}{-}{(.8,0)}{u-a};
   \node at (1.35,0) {$\catlabel{\lambda}$};
\end{tikzpicture}\:,
$$
because $\eta_a\left(u\:
\begin{tikzpicture}[anchorbase,scale=1]
	\draw[-to,thin] (0.08,-.15) to (0.08,.3);
      \node at (0.2,0.05) {$\catlabel{\lambda}$};
\end{tikzpicture}
 - 
\begin{tikzpicture}[anchorbase,scale=1]
	\draw[-to,thin] (0.08,-.15) to (0.08,.3);
      \opendot{0.08,0.07};
      \node at (0.25,0.05) {$\catlabel{\lambda}$};
\end{tikzpicture}\right)=(u-a)\:\begin{tikzpicture}[anchorbase,scale=1]
	\draw[-to,thin] (0.08,-.15) to (0.08,.3);
      \node at (0.2,0.05) {$\catlabel{\lambda}$};
\end{tikzpicture}
-\begin{tikzpicture}[anchorbase,scale=1]
	\draw[-to,thin] (0.08,-.15) to (0.08,.3);
      \opendot{0.08,0.07};
      \node at (0.25,0.05) {$\catlabel{\lambda}$};
\end{tikzpicture}$.
The relation  \cref{KMrels3equiv} follows using this and the observation that
$[f(u-a)]_{u^{-1}} = [f(u)]_{u^{-1}}$ for $f(u) \in \kk\lround u^{-1}\rround$.
To deduce the formulae describing $\eta_a$
on bubbles, we explain assuming $\lambda \geq 0$.
By the observation already made, we have that
$$
\eta_a\left(\begin{tikzpicture}[anchorbase]
\clockwisebubble{(0,0)};
\pinO{(-.2,0)}{-}{(-.9,0)}{u};
\node at (.4,0) {$\catlabel{\lambda}$};
\end{tikzpicture}
\right)
=
\begin{tikzpicture}[anchorbase]
\clockwisebubble{(0,0)};
\pinO{(-.2,0)}{-}{(-1,0)}{u-a};
\node at (.4,0) {$\catlabel{\lambda}$};
\end{tikzpicture}.
$$
Since $\lambda \geq 0$, we have that
$
\begin{tikzpicture}[anchorbase]
\filledclockwisebubble{(0,0)};
\node at (0,0) {$\kmlabel{u}$};
\node at (.4,0) {$\catlabel{\lambda}$};
\end{tikzpicture} = \delta_{\lambda,0} 1_{1_\lambda}$, and it follows easily that $\eta_a$ maps the generating function ${\catlabel{\lambda}\:}
\textclockwisebubble(u)$ from \cref{southwalespolice1} to
 ${\catlabel{\lambda}\:}
\textclockwisebubble(u-a)$.
Inverting, we deduce that $\eta_a$ maps
${\catlabel{\lambda}\:}
\textanticlockwisebubble(u)$ to
${\catlabel{\lambda}\:}
\textanticlockwisebubble(u-a)$.
The various formulae now follow by equating coefficients.
\end{proof}

Let 
$\iota:\fU(\sl_2) \rightarrow \fUc(\sl_2)$ be the canonical inclusion.
The 2-functor $\eta_a:\fU(\sl_2)\rightarrow \fU(\sl_2)$ 
maps
$$
\begin{tikzpicture}[anchorbase]
\draw[-to,thin] (-.6,-.3) to (-.6,.3);
\draw[-,thick] (0,-.3) to (0,.3);
\node at (0,-.5) {$\kmlabel{X}$};
\draw[-to,thin] (.6,-.3) to (.6,.3);
\limitbandOO{(-.6,0)}{}{}{(.6,0)};
\node at (0,.5) {$\phantom{\kmlabel{X}}$};
\node at (.9,0) {$\catlabel{\lambda}$};
\end{tikzpicture}
\mapsto
2a\:
\begin{tikzpicture}[anchorbase]
\draw[-to,thin] (-.6,-.3) to (-.6,.3);
\draw[-,thick] (0,-.3) to (0,.3);
\draw[-to,thin] (.6,-.3) to (.6,.3);
\node at (0,-.5) {$\kmlabel{X}$};
\node at (0,.5) {$\phantom{\kmlabel{X}}$};
\node at (.9,0) {$\catlabel{\lambda}$};
\end{tikzpicture}
+\begin{tikzpicture}[anchorbase]
\draw[-to,thin] (-.6,-.3) to (-.6,.3);
\draw[-,thick] (0,-.3) to (0,.3);
\draw[-to,thin] (.6,-.3) to (.6,.3);
\node at (0,-.5) {$\kmlabel{X}$};
\opendot{-.6,0};
\node at (0,.5) {$\phantom{\kmlabel{X}}$};
\node at (.9,0) {$\catlabel{\lambda}$};
\end{tikzpicture}
+
\begin{tikzpicture}[anchorbase]
\draw[-to,thin] (-.6,-.3) to (-.6,.3);
\draw[-,thick] (0,-.3) to (0,.3);
\node at (0,-.45) {$\kmlabel{X}$};
\draw[-to,thin] (.6,-.3) to (.6,.3);
\opendot{.6,0};
\node at (0,.45) {$\phantom{\kmlabel{X}}$};
\node at (.9,0) {$\catlabel{\lambda}$};
\end{tikzpicture}.
$$
Assuming that $a \in \kk^\times$, 
this 2-morphism is invertible 
in the completion $\fUc(\sl_2)$, hence, 
the composition $\iota\circ\eta_a$ 
extends uniquely to a strict  2-functor
\begin{equation}\label{zeta}
\overline{\iota\circ\eta}_a:\fU(\sl_2)_\loc \rightarrow \fUc(\sl_2).
\end{equation}
Finally, for $t \in \{0,1\}$,
we let $\cUc(\sl_2;t)$ be the strict monoidal category defined by collapsing $\fUc(\sl_2)$ in exactly the same way as in \cref{hearts},
replacing the localization $\fU(\sl_2)_\loc$ with the completion $\fUc(\sl_2)$. The 2-functor \cref{zeta} induces
a strict monoidal functor
\begin{equation}\label{zeta2}
\zeta_a:\cU(\sl_2;t)_\loc \rightarrow \cUc(\sl_2;t).
\end{equation}
We denote its natural extension to the additive envelopes of these categories by
 \begin{equation}\label{zeta3}
 \zeta_a^+:\Add\left(\cU(\sl_2;t)_{\loc}\right)
\rightarrow \Add\big(\cUc(\sl_2;t)\big).
\end{equation}
This functor will be useful as it is easier to work with $\cUc(\sl_2;t)$ than with $\cU(\sl_2;t)_\loc$, since we can exploit the topological basis arising from \cref{nondegeneracy}.

\begin{proof}[Proof of linear independence part of \cref{basisthm}]
We first use the pivotal structure to make a standard reduction:
Closing diagrams on the left
$$
\begin{tikzpicture}[baseline=0mm,scale=.95]
\draw (-.4,-.9) to (-.4,-.15);
\draw (.4,-.9) to (.4,-.15);
\draw (-.4,.9) to (-.4,.15);
\draw (.4,.9) to (.4,.15);
\node at (0,-.8) {$\dots$};
\node at (0,.8) {$\dots$};
\node[rectangle,rounded corners,draw,fill=blue!15!white,inner sep=2pt] at (0,0) {$\hspace{3.5mm}f\hspace{3.5mm}$};
\end{tikzpicture}
\quad\mapsto\quad
\begin{tikzpicture}[anchorbase,scale=.9]
\node at (0,-.3) {$\dots$};
\node at (-1.4,-.3) {$\dots$};
\draw (-.4,-.4) to (-.4,-.16);
\draw (.4,-.4) to (.4,-.16);
\draw (-.4,.4) to (-.4,.16);
\draw (.4,.4) to (.4,.16);
\draw (.4,.4) to[out=90,in=90,looseness=1.5] (-1.8,.4) to (-1.8,-.4);
\draw (-.4,.4) to[out=90,in=90,looseness=1.5] (-1,.4) to (-1,-.4);
\node[rectangle,rounded corners,draw,fill=blue!15!white,inner sep=2pt] at (0,0.13) {$\hspace{3.5mm}f\hspace{3.5mm}$};
\end{tikzpicture}
$$
defines a $\Gamma$-module isomorphism 
$\Hom_{\cNB_t}(B^{\star n}, B^{\star m})
\stackrel{\sim}{\rightarrow} \Hom_{\cNB_t}(B^{\star (m+n)}, \one)$. Thus, the proof is  reduced to the special case that $m=0$, which we assume from now on.

Take a linear relation
\begin{equation}\label{math230}
\sum_{s \in \RSD(n,0)} s \cdot p_s = 0
\end{equation}
in $\Hom_{\cNB_t}(B^{\star n}, \one)$
for $p_s \in \Gamma$.
We must show that $p_s = 0$ for all $s$.
Suppose not and choose $u \in \RSD(n,0)$ with a maximal number of crossings
such that $p_u \neq 0$.
Let $Y$ be the word in $\langle E, F \rangle$ obtained
by orienting the generalized caps in $u$ from left to right then reading
the orientations of the boundary points using the usual dictionary $E = $ upward and $F = $ downward.
Also take any $\lambda \in \Z$.
We apply the monoidal functor
$\zeta_a^+ \circ \Omega_t$
to \cref{math230} to obtain a
morphism in $\Add\big(\cUc(\sl_2;t)\big)$, then restrict this to
$Y 1_\lambda$ to obtain a linear relation
\begin{equation*}
\sum_{s \in \RSD(n,0)}
\zeta_a^+(\Omega_t(s\cdot p_s))|_{Y 1_\lambda}
= 0
\end{equation*}
in $\Hom_{\fUc(\sl_2)}(Y 1_\lambda,1_\lambda)$.
By \cref{magic}, we deduce that
\begin{equation}\label{math231}
\sum_{s \in \RSD(n,0)}
\zeta_a^+(\Omega_t(s))|_{Y 1_\lambda}
\cdot \zeta_a(p_s)= 0,
\end{equation}
where $\zeta_a(p_s)$ denotes the image of $p_s \in \Gamma \subseteq \Lambda$ under the automorphism $\zeta_a$ of $\Lambda\equiv\End_{\fU(\sl_2)}(1_\lambda)$ described by \cref{coffee1,coffee2}.
In particular, this completes the proof in the special case $n=0$.

Now we need to think more carefully about the 2-morphisms
$\zeta_a^+(\Omega_t(s))|_{Y 1_\lambda}$ arising in \cref{math231}.
It is simply 0 if $s$ is not admissible for $Y$.
Assuming $s$ is admissible,
the definition of $\Omega_t$ given in \cref{psit} implies that
$\zeta_a^+(\Omega_t(s))|_{Y 1_\lambda}$
is a topological
sum of morphisms defined by oriented string diagrams 
with the same number or with fewer crossings compared to $s$, with all the ones
with the same number of crossings being of the same underlying type as $s$; this sum here may be infinite since
the images under $\zeta_a$ of
internal bubbles and teleporters are infinite linear combinations of diagrams with extra dots and internal dotted bubbles.
Using the straightening algorithm sketched in the spanning part of the proof, diagrams with the same number of crossings but a different type
to $s$ and diagrams with strictly fewer crossings than $s$
can be rewritten as a $\Lambda$-linear combination of
basis vectors from $\overrightarrow{\RSD}(Y 1_\lambda,1_\lambda)$, 
 all of which either have the same number of crossings but a different type to $s$ as before or have fewer crossings than $s$.
Now let 
$$
\X
:= \{s \in \RSD(n,0)\:|\:s \sim u\}.
$$
For $s \in \X$, all of the generalized caps in
 $\vec{s}\;{\scriptstyle\color{catcolor}\lambda}$ 
 are oriented from left to right, hence, this diagram
 only involves upward or rightward crossings. Using the definition of $\Omega_t$,
it follows that
$\zeta_a^+(\Omega_t(s))|_{Y 1_\lambda} = 
\zeta_a(\vec{s}\;{\scriptstyle\color{catcolor}\lambda})+($a topological linear combination of basis vectors with strictly fewer crossings$)$.
Putting these two points together, it follows that
$$
\sum_{s \in \RSD(n,0)}
\zeta_a^+(\Omega_t(s))|_{Y 1_\lambda}\cdot \zeta_a(p_s)
= \sum_{s \in \X} \zeta_a(\vec{s}\;{\scriptstyle\color{catcolor}\lambda})\cdot \zeta_a(p_s) + (*) = 0,
$$
where $(*)$ is a topological linear combination of basis vectors
with the same number of crossings as $u$ but a different type or with strictly fewer crossings.
In view of \cref{nondegeneracy}, it follows that
 $$
 \sum_{s \in \X} \zeta_a(\vec{s}\;{\scriptstyle\color{catcolor}\lambda})\cdot \zeta_a(p_s) = 0$$ 
 in $\Hom_{\fU(\sl_2)}(Y 1_\lambda,1_\lambda)$, 
 hence,
$\sum_{s \in \X} (\vec{s}\;{\scriptstyle\color{catcolor}\lambda})\cdot p_s = 0$.
We deduce that $p_s = 0$
for all $s \in \X$, in particular, $p_u= 0$. This contradiction completes the proof
of \cref{basisthm}.
\end{proof}

\begin{corollary}\label{bubblealgebra}
The algebra homomorphism $\gamma_t:\Gamma \rightarrow \End_{\cNB_t}(\one)$ is an isomorphism.
\end{corollary}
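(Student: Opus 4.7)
The plan is to deduce this immediately from the basis theorem, \cref{basisthm}, specialized to the case $m=n=0$.

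First I would identify the set $\RSD(0,0)$ of undotted reduced $0 \times 0$ string diagrams. Since a $0 \times 0$ diagram has no boundary points, none of its strings can be propagating strings, generalized cups, or generalized caps (each of these requires at least one boundary point). The reducedness condition forbids internal bubbles. Hence the only reduced $0 \times 0$ diagram is the empty diagram, representing the identity morphism $1_\one$, so $\RSD(0,0) = \{1_\one\}$.

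Next I would invoke \cref{basisthm} with $m=n=0$: it gives that $\End_{\cNB_t}(\one) = \Hom_{\cNB_t}(B^{\star 0}, B^{\star 0})$ is free as a $\Gamma$-module with basis $\{1_\one\}$. The $\Gamma$-action was defined by $f \cdot p = f \star \gamma_t(p)$, so under this action we have $1_\one \cdot p = 1_\one \star \gamma_t(p) = \gamma_t(p)$. Therefore the map $\Gamma \to \End_{\cNB_t}(\one),\ p \mapsto \gamma_t(p)$ is a $\kk$-module isomorphism; since it is also an algebra homomorphism by construction (\cref{gammacor}), it is an algebra isomorphism.

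There is no real obstacle here, as \cref{basisthm} has already been established; the proof is just the observation that $\RSD(0,0)$ is a singleton. The only thing to double-check is that the convention defining the $\Gamma$-action on $\End_{\cNB_t}(\one)$ (via horizontal composition with $\gamma_t$) makes $1_\one \cdot p$ equal to $\gamma_t(p)$, which is immediate from strictness of the monoidal structure.
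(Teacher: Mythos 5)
Your proof is correct and follows exactly the paper's argument: the paper likewise deduces the corollary from the $m=n=0$ case of \cref{basisthm}, using that $\RSD(0,0)$ is a singleton. Your additional remarks spelling out why the singleton is $\{1_\one\}$ and why the $\Gamma$-action identifies $1_\one\cdot p$ with $\gamma_t(p)$ are accurate.
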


\begin{proof}
This follows from the $m=n=0$ case of \cref{basisthm}, since
$\RSD(0 , 0)$ is a singleton.
\end{proof}


\bibliographystyle{alpha}
\bibliography{nilbrauer-corrected}
\end{document}